\documentclass[a4paper, 11pt, svgnames]{article} 
\usepackage[english]{babel}
\usepackage[T1]{fontenc}
\usepackage{./packages}
\usepackage{lineno}

\title{The two-nest ants process on triangle-series-parallel graphs} 
\author{{C\'ecile Mailler}\thanks{University of Bath, Claverton Down, Bath BA2 7AY, UK} \and {Zo\'e Varin}\thanks{Université Paris Cité, CNRS, IRIF, F-75013, Paris, France}} 
\date{}

\graphicspath{{./images/}}

\begin{document}
	\maketitle
	
	\begin{abstract}
		The ants process is a stochastic process introduced by Kious, Mailler and Schapira as a model for the phenomenon of ants finding shortest paths between their nest and a source of food (seen as two marked nodes in a finite graph), with no other means of communications besides the pheromones they lay behind them as they explore their environment.
		The ants process relies on a reinforcement learning mechanism.
		In this paper, we modify the ants process by having more than one ants nest 
		(and still one source of food).
		For technical reasons, we restrict ourselves to the case when there are two nests, and when the graph is a triangle between the two nests and the source of food, whose edges have been replaced by series-parallel graphs.
		In this setting, using stochastic approximation techniques, comparison with P\'olya urns, and combinatorial arguments, we are able to prove that the ants process converges and to describe its limit.
	\end{abstract}

	\section{Introduction}
	
	The ants process, originally introduced by Kious, Mailler and Schapira~\cite{kious2020finding,kious2021tracereinforced},
	is a stochastic process of successive random walks on a graph, designed to model the well-known phenomenon of a colony of ants being able to find shortest paths between the nest and a source of food.
	It is widely believed that this biological phenomenon can be modelled using reinforcement learning, and this has inspired a whole branch of computer science called ``ant colony optimisation'' (see for example the book of Dorigo and St\"{u}tzle~\cite{Dorigo2004} for an introduction to this topic).
	
	Another striking biological phenomenon that is believed to be well-modelled by reinforcement learning is the phenomenon of a slime mould called ``physarum'' being capable of finding ``optimal transport networks''. See, for example, Tero et al.~\cite{tero2010rules} in which the authors make an experiment that shows that ``{\it the slime mold Physarum polycephalum forms networks with comparable efficiency, fault tolerance, and cost to those of real-world infrastructure networks—in this case, the Tokyo rail system}'', and introduce a mathematical model for this phenomenon.
	See also Bonifaci, Mehlhorn and Varma~\cite{bonifaci2012physarum}, in which the authors prove that, in the model of Tero et al.~\cite{tero2010rules}, physarum indeed can find shortest paths between two points in a graph.
	
	Our aim in this paper is to generalise the ants process of Kious, Mailler, and Schapira~\cite{kious2020finding,kious2021tracereinforced} 
	and see if this could be a model for the phenomenon of 
	physarum finding ``optimal'' transport networks 
	(we write ``optimal'' in quotes to acknowledge the fact 
	that there is no clear mathematical definition of that concept; this will thus be discussed later on).
	Although we are able to prove that our new ants process converges, we fail to interpret the limit as an ``optimal transport network'', suggesting that this generalisation is \emph{not} a good model for physarum.

	\subsection{The multi-nest ants process}\label{sub:def}
	Let $G = (V, E)$ be a finite graph and let $\nest_1$, $\nest_2$ and $\food$ be three vertices in~$V$.
	Let $\alpha\in [0,1]$ (this models the proportion of ants living in the first nest).
	The ants process is a process $({\bf W}(n))_{n\geq 0}$ such that, for all $n\geq 0$, ${\bf W}(n)= (W_e(n))_{e\in E}$ is a sequence of non negative integers. For all $n\geq 0$ and $e\in E$, we call $W_e(n)$ the weight of edge~$e$ at time~$n$ and interpret it as the amount of pheromones laid on that edge up to time~$n$.
	
	We let $W_e(0) = 1$ for all $e\in E$.
	Furthermore, for all $n\geq 0$, given ${\bf W}(n)$:
	\begin{enumerate}[label=(\alph*)]
		\item We set $X^{\sss (n+1)}_0 = \nest_1$ with probability $\alpha$, and $X^{\sss (n+1)}_0 = \nest_2$ with probability $1-\alpha$ (independently from everything).
		\item We let $(X^{\sss (n+1)}_i)_{i\geq 0}$ be the random walk on $V$ started at $X^{\sss (n+1)}_0$, stopped when first hitting $\food$, and whose transition probabilities are given by the edge-weights ${\bf W}(n)$. In other words, for all $i\geq 0$, for all $u\in V\setminus\{\food\}, v\in V$, the probability that $X^{\sss (n+1)}_{i+1} = v$ given that $X^{\sss (n+1)}_i = u$ is proportional to $W_{\{u,v\}}(n)$ (if $X_i^{\sss (n+1)} = \food$, then $X^{\sss (n+1)}_{i+1} = \food$ with probability~1). This is interpreted as the trajectory of the $(n+1)$-th ant.
		\item\label{item:LE} We then define $\gamma_{n+1}$ as the set of edges crossed by the $(n+1)$-th ant on its way back to the nest, and we assume that it goes back to its nest following its trajectory backwards in time, erasing loops. Equivalently, every time it sits at a vertex that was visited several times on the way forward, the ant choose to cross the edge that was crossed first on the way forward. 
		Mathematically, 
		we define $i_0 \coloneqq \min \{i : X^{\sss (n+1)}_i = \food\}$, 
		and while $j$ is such that $X^{\sss (n+1)}_{i_j} \neq X_0^{\sss (n+1)}$, 
		we define $i_{j+1}  \coloneqq \min \{i-1: X^{\sss (n+1)}_i = X^{\sss (n+1)}_{i_j} \}$. 
		Then  we set $\gamma_{n+1} = \{ \{X^{\sss (n+1)}_{i_{j+1}}, X^{\sss (n+1)}_{i_j}\}, 0 \leq j < J\}$,
		where $J$ is the largest $j$ for which $i_j$ has been defined.
		(See Figure \ref{fig:illuLEB} for an illustration.)

	\end{enumerate} 
	Finally, for all $e\in E$, we let 
	\[W_e(n+1) = W_e(n)+ {\bf 1}_{e\in \gamma_{n+1}}.\]
	
	\begin{remark}
		If $\alpha\in\{0, 1\}$, then one of the two nests is never used and we recover the model of Kious, Mailler and Schapira~\cite{kious2020finding,kious2021tracereinforced}. 
		In their papers, Kious, Mailler and Schapira study three variants of the ants process, which only differ by the definition of $\gamma_{n+1}$ as a function of $(X_i^{\sss (n+1)})_{i\geq 0}$. 
		In~\cite{kious2021tracereinforced}, they show that the ``trace-reinforced'' version of the model (in which $\gamma_{n+1}$ is the set of all edges crossed at least once by the $(n+1)$-th ant) does not find shortest paths in general, hence why we choose to not consider this version here.
		In~\cite{kious2020finding}, they consider two variants of the model, the ``loop-erased'' version and the ``geodesic'' one, which they conjecture both find shortest paths on any graph.
		We choose to consider here the ``loop-erased'' version of the model because of technical reasons and because our analysis partly relies on applying the results obtained by~\cite{kious2020finding} in the loop-erased model.
	\end{remark}
	
	\begin{figure}[t]\centering
		\begin{subfigure}[b]{0.25\textwidth}
			\centering\def\svgwidth{\columnwidth}
			\includegraphics[scale=0.65,page=1]{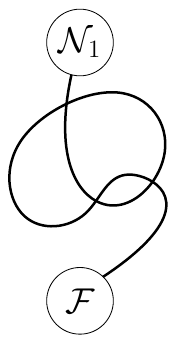}
		\end{subfigure} \hspace{1cm}
		\begin{subfigure}[b]{0.25\textwidth}
			\centering\def\svgwidth{\columnwidth}
			\includegraphics[scale=0.65,page=2]{illuLEbackward.pdf}
		\end{subfigure} 
		\caption{Illustration of the loop-erased process: $(X_i^{(n+1)})_{i\geq0}$ corresponds to the black trajectory on the leftmost picture; then the (backward) loop-erased trajectory $\gamma_{n+1}$ is represented by the orange non-dotted path on the rightmost picture.
		} 
		\label{fig:illuLEB}
	\end{figure} 
	
	\subsection{Main results}
	Our main result says that ``the multi-nest ants process finds a transport network in a triangle graph whose edges have been replaced by series-parallel graphs''. To state it, we first need a few definitions:
	\begin{definition}[See Subfigure~\ref{subfig:SPparallel} and \ref{subfig:SPseries}]
		A series-parallel graph is 
		\begin{itemize}
			\item either the graph with two vertices (called its source and its sink) linked by an edge;
			\item either two series-parallel graphs merged in series (i.e.\ the sink of the first graph is merged with the source of the second);
			\item or two series-parallel graphs merged in parallel (i.e.\ their two sources are merged into one source, and their two sinks merged into one sink).
		\end{itemize}
	\end{definition}
	
	\begin{definition}
		We define the height $h_{\min}(G)$ of a series-parallel graph $G$ as the graph distance between its source and its sink.
	\end{definition}
	
	\begin{definition}[See Subfigure~\ref{subfig:triangleSP}]\label{def:triangleSP}
		A graph $G$ with three marked nodes $\nest_1$, $\nest_2$ and $\food$ is a triangle-{SP} graph if and only if it can be obtained by replacing all three edges of the triangle graph between $\nest_1$, $\nest_2$ and~$\food$ by three series-parallel graphs. 
		
		We let $G_1 = (V_1, E_1)$ (resp.\ $G_2=(V_2, E_2)$) be the series-parallel graph replacing the edge between $\nest_1$ (resp.\ $\nest_2$) and $\food$, and $G_3 = (V_3, E_3)$ be the series-parallel graph replacing the edge between $\nest_1$ and~$\nest_2$. 
	\end{definition}

	\begin{figure}[t]\centering
		\begin{subfigure}[b]{0.25\textwidth}
			\centering\def\svgwidth{\columnwidth}
			\includegraphics[scale=0.65,page=2]{illuSP.pdf}
			\hfill
			\subcaption{ }
			\label{subfig:SPparallel}
		\end{subfigure} \hfill
		\begin{subfigure}[b]{0.25\textwidth}
			\centering\def\svgwidth{\columnwidth}
			\includegraphics[scale=0.65,page=3]{illuSP.pdf}
			\hfill
			\subcaption{ }
			\label{subfig:SPseries}
		\end{subfigure} \hfill
		\begin{subfigure}[b]{0.45\textwidth}
			\centering\def\svgwidth{\columnwidth}
			\includegraphics[scale=0.65,page=1]{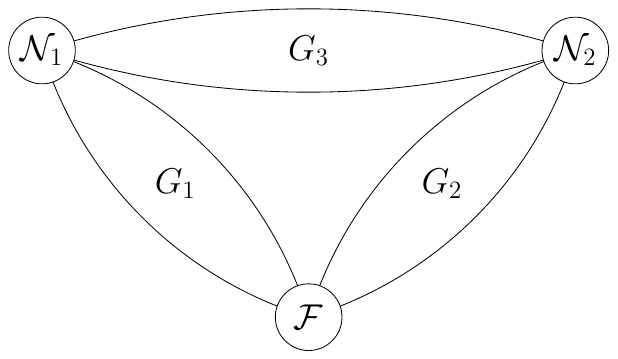}
			\hfill
			\subcaption{ }
			\label{subfig:triangleSP}
		\end{subfigure} 

		\caption{Illustration of the definition of series-parallel graphs and triangle-SP graphs. Subfigure \ref{subfig:SPparallel} represents the merging in parallel of two graphs $G_1$ and $G_2$. Similarly, Subfigure \ref{subfig:SPseries} illustrates the merging of $G_1$ and $G_2$ in series.
			Subfigure \ref{subfig:triangleSP} represents a triangle-SP graph (see Definition~\ref{def:triangleSP}). 
		} 
		\label{fig:triangleSP}
	\end{figure} 
	
	\begin{theorem}\label{thm:trianglesSP} 
		We assume that $G$ is a triangle-series-parallel graph as in Definition~\ref{def:triangleSP} and use the notation from this definition. We also assume that $\alpha\in(0,1)$. We let $\ell_i = h(G_i)$ and assume that $\ell_i\geq 1$, for all $i=1, 2, 3$.
		Finally, we let $({\bf W}(n))_{n\geq 0}$ be the (loop-erased) multi-nest ant process on~$G$.
		Then, almost surely as $n\to\infty$, for all $e\in E$,
		\[\frac{W_e(n)}{n} \to \chi_e,\]
		where $(\chi_e)_{e\in E}$ is a random vector.
		To describe $(\chi_e)_{e\in E}$, we assume, without loss of generality (by symmetry), that $\ell_1\leq \ell_2$.
		Also, we let $N_i(n)$ be the number of times up to time~$n$ when at least one edge in graph $G_i$ has been reinforced (i.e.\ its weight has increased by~1), for all $i=1,2,3$.
		
		{\bf (i)} If $\ell_2 \geq \ell_1+\ell_3$, then, almost surely as $n\to\infty$,
		\[\frac{N_i(n)}{n} \to\begin{cases}
			1 & \text{ if }i = 1\\
			0 & \text{ if }i = 2\\
			1-\alpha & \text{ if }i = 3.
		\end{cases}\]
		(Consequently, $\chi_e = 0$ for all $e\in E_2$.)
		Furthermore, for all $e\in E_1$ (resp.\ $E_3$), $\chi_e \neq 0$ if and only if the edge $e$ belongs to at least one of the shortest paths from $\nest_1$ to $\food$ inside $G_1$ (resp.\ $\nest_2$ to $\nest_1$ inside $G_3$).
		
		{\bf (ii)}  If $\ell_2 < \ell_1+\ell_3$ and $\ell_3 < \ell_1+\ell_2$, then, almost surely as $n\to\infty$,
		\[\frac{N_i(n)}{n} \to\begin{cases}
			\beta_1 & \text{ if }i = 1\\
			1-\beta_1 & \text{ if }i = 2\\
			\beta_3 & \text{ if }i = 3.
		\end{cases}\]
		where 
		\[\begin{cases}
			\beta_1 &= \displaystyle\frac{\alpha \ell_1 \left(\ell_3 + \ell_2 - \ell_1\right)}{\ell_1 \ell_3 + (\ell_2 - \ell_1 )\left((1-\alpha) (\ell_3-\ell_2) + \alpha \ell_1\right)},\\[7pt]
			\beta_3 &= \displaystyle\frac{\alpha(1-\alpha) \ell_3 (\ell_1+\ell_2-\ell_3)}{ \alpha(\ell_2 - \ell_1 )(\ell_1+\ell_2-\ell_3) + \ell_2(\ell_1-\ell_2+\ell_3)}.
		\end{cases}\]
		Moreover, for $i =1,2$, $\forall e \in E_i$ (respectively $e\in E_3$), $\chi_{e} \neq 0$ 
		if and only if $e$ belongs to at least one of the shortest paths from $\nest_i$ to $\food$ inside $G_i$ (respectively from $\nest_1$ to $\nest_2$ inside~$G_3$).
		
		{\bf (iii)} Finally, if $\ell_3 \geq \ell_1+\ell_2$, then almost surely as $n\to\infty$, 
		\[\frac{N_i(n)}{n} \to\begin{cases}
			\alpha & \text{ if }i = 1\\
			1-\alpha & \text{ if }i = 2\\
			0 & \text{ if }i = 3.
		\end{cases}\]
		(Consequently, $\forall e  \in E_3$, $\chi_{e} = 0$.) 
		Moreover, for all $i=1,2$, for all $e\in E_i$,	 
		$\chi_e \neq 0$ if and only if~$e$ belongs to at least
		one of the geodesics from $\nest_i$ to $\food$ inside $G_i$.
	\end{theorem}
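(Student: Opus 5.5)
The plan is to reduce the triangle-SP graph to a weighted triangle and then use stochastic approximation. The reduction goes in three steps.

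\emph{Step 1: effective conductances.} Each series-parallel piece $G_i$ is replaced by an effective conductance built from the current weights. For a loop-erased walk crossing $G_i$ from one endpoint to the other, we use the known fact from Kious, Mailler and Schapira~\cite{kious2020finding}: inside a series-parallel graph reinforced only when it is crossed, the normalised weights converge, and the weight carried by non-geodesic edges is negligible. More precisely, if $G_i$ has been crossed $N$ times, its effective conductance behaves like $N/\ell_i$ up to $o(N)$ corrections. Heuristically, the limiting weights concentrate on the shortest source--sink paths, and the effective resistance of a series chain of $\ell_i$ edges each carrying weight of order $N$ is about $\ell_i/N$.

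This coupling is legitimate because the restriction of the process to $G_i$, observed at the times $G_i$ is crossed, is again a loop-erased ants process on a series-parallel graph. Excursions of the walk into $G_i$ that return to the same endpoint do not affect the loop-erased trace inside $G_i$. The result of~\cite{kious2020finding} then gives the characterisation of which edges have $\chi_e\neq0$ in each case, once we know that $N_i(n)\to\infty$ linearly.

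\emph{Step 2: the three-dimensional dynamics.} Once the pieces are replaced by conductances $c_i\approx N_i/\ell_i$, the walk becomes a walk on the triangle $\nest_1,\nest_2,\food$. An ant from $\nest_1$ reaches $\food$ either directly, with loop-erased path $G_1$, or via $\nest_2$, with loop-erased path $G_3\cup G_2$. Computing the hitting probabilities on the triangle, the direct route is taken with probability
\[
p_1=\frac{c_1(c_2+c_3)}{c_1c_2+c_1c_3+c_2c_3}\cdot\text{(suitable ratio)} ,
\]
and symmetrically for ants from $\nest_2$. This has to be checked carefully: the loop-erased path depends on the last exit of $\nest_1$, so one uses the standard formula for the last-exit decomposition.

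With this, $x(n)=(N_1,N_2,N_3)/n$ satisfies a stochastic approximation recursion $x(n+1)-x(n)=\frac1{n+1}(F(x(n))+\xi_{n+1}+r_n)$. Here $\xi_{n+1}$ is a martingale increment, $r_n\to0$ comes from the $o(N)$ errors of Step 1, and
\[
F_1=\alpha p_1(x)+(1-\alpha)q_1(x)-x_1,
\]
with analogous expressions for $F_2$ and $F_3$. The function $F$ depends on $x$ only through $c_i=x_i/\ell_i$.

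\emph{Step 3: identifying the limit.} We find the zeros of $F$ on the simplex-like domain and show which one is attracting. We expect a Lyapunov function to exist, as in the Pólya-urn comparisons; failing that, we will show directly that the flow has a globally attracting equilibrium in the relevant face. In case (ii), solving $F=0$ with all $x_i>0$ should yield $\beta_1$ and $\beta_3$, with $x_2=1-\beta_1$ because every ant crosses exactly one of $G_1$ and $G_2$ last. The conditions $\ell_2<\ell_1+\ell_3$ and $\ell_3<\ell_1+\ell_2$ are exactly what make this interior point lie in the positive orthant. In cases (i) and (iii) the interior point disappears and the boundary equilibria become stable: $x_2=0$, respectively $x_3=0$. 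Here a comparison with a Pólya urn having unequal reinforcement lengths, in the spirit of~\cite{kious2020finding}, shows that the losing branch is reinforced only $o(n)$ times, giving the stated limits $(1,0,1-\alpha)$ and $(\alpha,1-\alpha,0)$.

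\emph{Main obstacle.} The hardest step is the boundary cases together with the nonconvergence to unstable equilibria, namely ruling out convergence to spurious zeros of $F$. Step 1's $o(N)$ error is only valid when $N_i\to\infty$, and on the boundary this is no longer guaranteed. We would first try the Benaïm-type criterion: show that the noise is nondegenerate near unstable points (the Pemantle non-convergence theorem). On the boundary faces, we would instead use the urn comparison to bound the growth of $N_2$ or $N_3$ by $n^{\theta}$ with $\theta<1$.
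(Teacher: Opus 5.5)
Your architecture is the same as the paper's. Each $G_i$, observed at its reinforcement times, is a single-nest process, and $C_i(n)\approx N_i(n)/\ell_i$ by the Kious--Mailler--Schapira bounds. Then $\hat{\mathbf N}(n)$ is a stochastic approximation with $F_i=P_i(w_1/\ell_1,w_2/\ell_2,w_3/\ell_3)-w_i$, and their Theorem~1.3 gives the edge-level statements.

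Steps~1--2 are essentially right but skip two points.
\begin{itemize}
\item With the \emph{backward} loop-erasure used here, excursions from the nest \emph{do} change the trace pathwise. The reduction holds only in law, which is what Lemma~\ref{lem:Zoe} provides.
\item Since $G_3$ is crossed in both directions, you also need that the loop-erased path from $\mathcal N_2$ to $\mathcal N_1$ is, in law, the reversal of the one from $\mathcal N_1$ to $\mathcal N_2$ (Lemma~\ref{claim:LEforward_egal_LEbackward}). You never mention this.
\end{itemize}
Your worry that $N_i\to\infty$ is needed is settled in the paper by the urn coupling giving $N_i(n)\ge n^{\varepsilon_i}$ (Lemma~\ref{lemma:Ni_poly}). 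Your $p_1$ needs no ``suitable ratio'': the route is fixed by the side from which the walk first enters the food, so $p_1=c_1(c_2+c_3)/(c_1c_2+c_1c_3+c_2c_3)$ exactly.

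\textbf{First gap: convergence of the flow.} ``We expect a Lyapunov function'' is not an argument. In case (ii) the target is an interior equilibrium of a planar field, so unless periodic orbits are excluded, the limit set of $\hat{\mathbf N}(n)$ need not be a point. The paper uses the Dulac function $g(w)=-\bigl(w_3(\ell_1+w_1(\ell_2-\ell_1))+\ell_3w_1(1-w_1)\bigr)/\bigl(w_1w_3(1-w_1)\bigr)$, for which $\partial_{w_1}(gF_1)+\partial_{w_3}(gF_3)=-g>0$, and then Poincar\'e--Bendixson. You also need the law-of-large-numbers bounds $\liminf(\hat N_1+\hat N_3)\ge\alpha$ and $\liminf(\hat N_2+\hat N_3)\ge 1-\alpha$. $F$ is discontinuous at $(w_1,w_3)=(0,0)$ and $(1,0)$; these bounds keep the process where $F$ is Lipschitz and eliminate those two zeros.

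\textbf{Second gap: ruling out the spurious zeros.} Your plan here would not work.
\begin{itemize}
\item Every zero to be excluded, namely $(0,\alpha)$, $(\alpha,0)$, $(1,1-\alpha)$, $(0,0)$ and $(1,0)$, lies on the boundary of the domain. There the noise transverse to the face degenerates: for instance, the conditional variance of the $N_3$-increment is $p_3(1-p_3)\to 0$ as $w_3\to0$. So Pemantle's non-degenerate-noise non-convergence theorem does not apply.
\item Your fallback, bounding $N_2$ or $N_3$ above by $n^\theta$, points the wrong way. At best it confirms convergence to the stable point; it cannot exclude the other zeros.
\item It also breaks down in the critical cases $\ell_2=\ell_1+\ell_3$ and $\ell_3=\ell_1+\ell_2$, where the competing rates coincide.
\end{itemize}

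What is needed are \emph{lower} bounds by one-dimensional $G$-urns (Corollary~\ref{cor:urnstochdomin}):
\begin{itemize}
\item $\liminf\hat N_1\ge\alpha$, from $\inf_{w_3}p_1$ and $\ell_1\le\ell_2$. This excludes $(0,\alpha)$.
\item $\liminf\hat N_3>0$ when $\ell_3<\ell_1+\ell_2$, because near $w_3=0$ with $w_1\approx\alpha$ one has $p_3\approx\frac{\ell_1+\ell_2}{\ell_3}w_3$. This excludes $(\alpha,0)$.
\item $\liminf\hat N_2>0$ when $\ell_2<\ell_1+\ell_3$, because near $(1,1-\alpha)$ one has $p_2\approx\frac{\ell_1+\ell_3}{\ell_2}w_2$. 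This excludes $(1,1-\alpha)$.
\end{itemize}
Combined with convergence to some zero, these leave exactly one candidate in each regime, equality cases included. For example, $\ell_2\ge\ell_1+\ell_3$ with $\ell_1\ge1$ forces the strict inequality $\ell_3<\ell_1+\ell_2$, so $(\alpha,0)$ is still excluded.
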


	Before discussing this result in more details, we make the following elementary remarks:
	\begin{itemize}	
		\item In the case  $\ell_2 < \ell_1+\ell_3$ and $\ell_3 < \ell_1+\ell_2$, one can prove that $\beta_1,\beta_3 \in (0,1)$ (see Remark~\ref{rem:beta1andbeta3}).
		
		\item Note that we always have $\lim_{n\uparrow\infty} N_1(n)/n + \lim_{n\uparrow\infty} N_2(n)/n = 1$. This is because, by definition of the model, for all $n\geq 0$, $N_1(n) + N_2(n) = n$.
		Indeed, for all $n\geq 1$, there are only four possibilities for $\gamma_n$ (the path reinforced by the $n$-th ants): either is it a simple path from $\nest_1$ to $\food$ that only visits $G_1$, or it is a simple path from $\nest_1$ to $\food$ that only visits $G_3$ and $G_2$, or it a simple path from $\nest_2$ to $\food$ that only visits $G_2$, or it is a simple path from $\nest_2$ to $\food$ that only visits $G_3$ and $G_1$.
		
		\item Note that $(\chi_e)_{e\in E}$ remains unchanged if one multiplies $\ell_1$, $\ell_2$ and $\ell_3$ by the same constant~$c$. 
	\end{itemize}
	
	To give the ideas of the proof, we need to introduce a particular case of triangle-series-parallel graphs:
	\begin{definition}[See Figure~\ref{fig:triangle}]
		We call an SP-triangle graph the $(\ell_1, \ell_2, \ell_3)$-triangle if and only if, for all $i=1,2,3$, $G_i$ is the graph whose source is linked to the sink by $\ell_i$ edges in series.
	\end{definition}

	{\bf Ideas of the proof:} The idea of the proof of Theorem \ref{thm:trianglesSP} is that 
	the ants process restricted to each of $G_1$, $G_2$, and $G_3$ is a single-nest ants process as in the original model of Kious, Mailler and Schapira~\cite{kious2020finding}, 
	and that $(N_1(n), N_2(n), N_3(n))_{n\geq 0}$ 
	behaves like the two-nest ants process on the $(\ell_1, \ell_2, \ell_3)$ triangle.
	To prove that $\hat{\bf N}(n)=(N_1(n), N_2(n), N_3(n))/n$ converges almost surely, we show that $(\hat{\bf N}(n))_{n\geq 0}$ is a stochastic approximation and use the ODE method (see Bena\"im~\cite{benaim2006dynamics}, and~\cite{pemantle2007survey} for a survey) 
	to show its convergence.
	Intuitively, it is clear that the two-nest ants process restricted to $G_1$ and $G_2$ behaves like a single nest ant process on these series-parallel graphs, and thus, by~\cite{kious2020finding}, the ants find shortest paths within $G_1$ and $G_2$. 
	Because, when they explore $G_3$, the ants can go from $\nest_1$ to $\nest_2$ as well as from $\nest_2$ to $\nest_1$, the fact that the two-nest ant process restricted to $G_3$ is equal in distribution to a single-nest ants process is less straightforward, 
	but true, implying that the ants also find shortest paths within~$G_3$.
	
	\medskip
	Since, inside each of $G_1$, $G_2$, and $G_3$, our process finds shortest paths, 
	it is enough to discuss our result in the particular case when $G$ is an $(\ell_1, \ell_2, \ell_3)$-triangle.
	We start this discussion with very simple examples of triangle graphs and show on these simple examples that an interpretation in terms of an optimal transport network does not seem possible.

	\begin{figure}
		\begin{center}
			\includegraphics[scale=0.7,page=4]{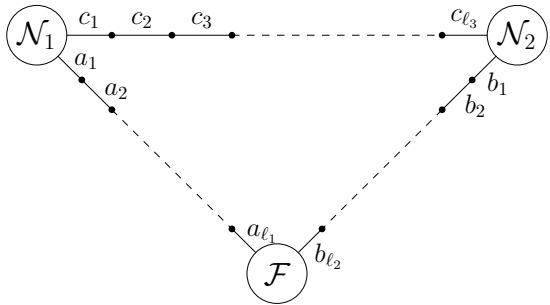}
		\end{center}
		\caption{The $(\ell_1, \ell_2, \ell_3)$-triangle.}
		\label{fig:triangle}
	\end{figure}
	
	\begin{example}
		Let $G$ be the $(1,1,2)$-triangle.
		In this case, Theorem~\ref{thm:trianglesSP} applies and shows that, if $\alpha\in (0,1)$, almost surely as $n\to\infty$,
		\[\frac{W_e(n)}n \to 
		\begin{cases}
			1&\text{ if }e = \{\nest_1, \food\}\\
			1-\alpha & \text{ if }e = \{\nest_1, \nest_2\}\\
			0 & \text{ otherwise.}
		\end{cases}\]
		This indeed corresponds to the intuition that, on $G$, ants travelling from $\nest_2$ to $\food$ (which represent a proportion $1-\alpha$) should go through $\nest_1$ and thus travel through $\{\nest_1, \nest_2\}$, while all ants (i.e.\ a proportion~1 of the colony) should travel through $\{\nest_1, \food\}$. 
		If one had to build roads for these ants to travel efficiently between $\nest_1$, $\nest_2$, and $\food$, one should build one road of ``width'' $1-\alpha$ between $\nest_1$ and $\nest_2$ (i.e.\ a road big enough to support a proportion $(1-\alpha)$ of the population), and one road of width~1 between $\nest_1$ and $\food$ (i.e.\ a road big enough to support $100\%$ of the population).
	\end{example}
	
	\begin{example}\label{ex:iso}
		Let $G$ be the $(1,1,1)$-triangle.
		In this case, Theorem~\ref{thm:trianglesSP} applies and shows that, almost surely as $n\to\infty$,
		\[\frac{W_e(n)}n \to 
		\begin{cases}
			\alpha&\text{ if }e = \{\nest_1, \food\}\\
			\alpha(1-\alpha) & \text{ if }e = \{\nest_1, \nest_2\}\\
			1-\alpha & \text{ if }e=\{\nest_2, \food\}.
		\end{cases}\]
		In this example, one would expect all the ants from $\nest_1$ to go to $\food$ through $G_1$, thus we would indeed need a road of width $\alpha$ between $\nest_1$ and $\food$.
		Similarly between $\nest_2$ and $\food$.
		The ants process seems to suggest one also builds a road of width $\alpha(1-\alpha)$ between $\nest_1$ and $\nest_2$. If one interprets $\nest_1$, $\nest_2$ and $\food$ as three cities, it would indeed make sense to build a road between $\nest_1$ and $\nest_2$ for travellers who want to go from one to the others.
		We failed at interpreting its weight of $\alpha(1-\alpha)$ as being ``optimal'' in some sense.
		
	\end{example}
	
	\begin{example}\label{ex:further}
		Let $G$ be the $(2,2,3)$-triangle.
		In this case, Theorem~\ref{thm:trianglesSP} applies and shows that, almost surely as $n\to\infty$,
		\[\frac{W_e(n)}n \to 
		\begin{cases}
			\alpha&\text{ if }e = \{\nest_1, \food\}\\
			\alpha(1-\alpha)/2 & \text{ if }e = \{\nest_1, \nest_2\}\\
			1-\alpha & \text{ if }e=\{\nest_2, \food\}.
		\end{cases}\]
		The only difference with the limit in the $(1,1,1)$-triangle case is that the weight of $\{\nest_1,\nest_2\}$ is halved; this seems to have no good interpretation if one tries to see the limit as an optimal transport network. In both examples, the shortest path between $\nest_1$ and $\nest_2$ is $\{\nest_1,\nest_2\}$, and its weight should thus be $\alpha(1-\alpha)$ in both cases; or at least the same in both cases.
	\end{example}

	We now discuss our result by applying it to a general $(\ell_1, \ell_2, \ell_3)$-triangle graph:
	
	If $\ell_2 > \ell_1+\ell_3$, our main result implies that, asymptotically, almost all ants (i.e.\ a proportion converging to~1) starting from $\nest_1$ (resp.\ $\nest_2$) go to $\food$ following edges in $G_1$ (resp.\ $G_3\cup G_1$), i.e.\ edges on the shortest path between $\nest_1$ (resp.\ $\nest_2$) and $\food$. In other words, in this case, the ants find the shortest path between their nest and the source of food.
	
	When $\ell_2 = \ell_1+\ell_3$, an interesting phenomenon occurs: 
	indeed, in this case, there are two equally long paths to go from $\nest_2$ to $\food$,
	but, asymptotically, almost all ants that start at $\nest_2$ go from $\nest_2$ to $\food$ through $G_3\cup G_1$.
	It shows that there is some interaction between the ants that start from $\nest_1$ and those starting from $\nest_2$.

	When $\ell_3 \geq \ell_1+\ell_2$, then the shortest path from $\nest_1$ 
	(resp.\ $\nest_2$) to $\food$ 
	is through $G_1$ (resp.\ $G_2$), and, asymptotically, almost no ants goes through $G_3$.
	In other words, in this case as well, the ants find the shortest paths between their nest and the source of food.

	Finally, the case $\ell_2 < \ell_1+\ell_3$ and $\ell_3 < \ell_1+\ell_2$ is more surprising: 
	since $\ell_1 \leq \ell_2$ and $\ell_2 < \ell_1+\ell_3$, 
	no shortest path from $\nest_1$ (resp.\ $\nest_2$) to $\food$ has edges in $G_3$. 
	Yet, asymptotically, a proportion $\beta_3>0$ of the ants go from their nest to the source of food through $G_3$, even though this is not an optimal route.
	The two nests are close enough to create some interaction between the ants starting at $\nest_1$ and the ants starting at $\nest_2$.
	
	\subsection{Discussion}
	{\bf Open problems:}
	Our main result leaves many questions about the multi-nest process unanswered. In particular, our main result only holds in the case when the underlying graph is a triangle-series-parallel graph, but we conjecture that the multi-nest ant process converges on any underlying graph, although it is not clear what the limit should be.
	Before looking at the fully general case, one may want to look at the $k$-cycle (or at the $k$-complete graph), with $k-1$ of the nodes being nests, and the last node being the source of food, and with each edge then replaced by a series-parallel graph.
	In this case, one would have to analyse a $(k-1)$-dimensional (resp.\ $\binom k2$-dimensional, for the $k$-complete graph) stochastic approximation. 
	
	With the aim of modelling physarum being able to find optimal transport networks, 
	one could modify the model as follows: instead of having $(k-1)$ nests and one source of food, one could imagine having $k$ distinguished nodes representing $k$ cities in the graph.
	Each ant would then pick at random one of the cities where it would start its walk (its nest), and another city where it would stop its walk (its source of food).
	Could this multi-city ants process be a better model for finding optimal transport networks?
	
	\medskip
	{\bf Other ant walks: }
	Several other probabilistic models of path formation are inspired by ants depositing pheromones behind them when exploring a graph by random walks. 
	For example, Le Goff and Raimond~\cite{LGR} consider the case of one self-avoiding vertex reinforced random walk and show that on the complete graph, and with super-linear reinforcement, the walk eventually localises on a finite number of vertices.
	Similarly, Erhard, Franco, and Reis~\cite{EFR} consider a directed-edge-reinforced random walk and show that, on any finite graph, and on $\mathbb Z^d$ ($d\geq 2$), the random walk almost surely localises on a cycle.
	More recently, Erhard and Reis~\cite{ER24} generalise the model of~\cite{EFR} to the case of $N$ walkers interacting with each other by reinforcing the same directed-edge-weights and give conditions under which the $N$ walkers all localise on the same cycle.
	
	{\medskip
		{\bf Other path-formation models:} The ants process can be seen as a path-formation model: indeed, the set of edges whose weights grow linearly in~$n$, which can be seen as the output of the process, is a subgraph of the original graph, and in some cases, a simple path.
		Just like the phenomenon of ants finding shortest paths, it is believed that the development of the brain can be modelled using reinforcement learning. 
		A probabilistic model for this phenomenon, which can also be seen as a path-formation model was proposed by van der Hofstad, Holmes, Kuznetsov and Ruszel \cite{WARM}, and later studied by
		Hirsch, Holmes and Kleptsyn~\cite{HHK, WARM3}, Holmes and Kleptsyn~\cite{HK}, Couzini\'e and Hirsch~\cite{HK}.
	}

	\paragraph{Acknowledgements}
	The authors are grateful to Daniel Kious for (early but important) discussions on this project: the project started during a visit of ZV to the University of Bath for a 5-month undergraduate research visit during which she was co-supervised by Daniel Kious and CM (who was away from Bath for the first half of ZV's visit).

	\section{Strategy and preliminary results}\label{sect:stratandpremresults}
	The proof is done in two main steps: 
	We first prove that $(N_1(n), N_2(n), N_3(n))/n$ converges almost surely when $n\to\infty$; this is done using stochastic approximation techniques.
	We then use this convergence, together with Theorem~1.3 of~\cite{kious2020finding}, to prove Theorem~\ref{thm:trianglesSP}.
	To prove convergence of $(N_1(n), N_2(n), N_3(n))/n$, we need the following results on stochastic approximations, ``$G$-urns'', and probabilities of hitting events for random walks on weighted graphs.
	
	\subsection{Random walks on weighted graphs: the method of conductances}
	
	We define the \textit{effective conductance} of a weighted series-parallel graph as in \cite{kious2020finding} (a definition for any graph is given in \cite[Chapter 2]{MR3616205} but we will not need it here). 
	The effective conductance of a series-parallel graph $G$ is 
	\begin{itemize}
		\item $C_G = w_e$, if $G$ is reduced to a single edge $e$,
		\item $G_G = C_{G_1} + C_{G_2}$, if $G$ is the composition of two \SP graphs $G_1$ and $G_2$ merged in parallel, 
		\item $G_G =\frac{1}{1/C_{G_1} + 1/C_{G_2}}$, if $G$ is the composition of two \SP graphs $G_1$ and $G_2$ merged in series.
	\end{itemize} 
	The key property of the effective conductance is that, if $G_1$ is a series-parallel graph from $S$ to $T_1$, and $G_2$ a \SP graph from $S$ to $T_2$ (as illustrated on Figure \ref{fig:illu_conductances}), then the probability that a random walk starting from $S$ reaches $T_1$ before $T_2$ is 
	\begin{equation}
		\frac{C_{G_1}}{C_{G_1} + C_{G_2}}. \label{eqn:conductances_proba}
	\end{equation}
	\begin{figure}[t]\centering
		\includegraphics[scale=0.7,page=5]{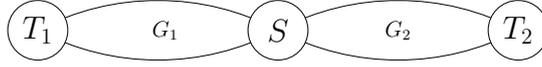}
		\caption{The probability that a random walk starting from $S$ reaches $T_1$ before $T_2$ is $\frac{C_{G_1}}{C_{G_1} + C_{G_2}}$.} \label{fig:illu_conductances}
	\end{figure}

	\subsection{Generalised Pólya urn processes}\label{sect:GPurn}
	In our proofs, we will use the following results, which are 
	stated and proved by Kious, Mailler and Schapira \cite{kious2021tracereinforced}, but can also be found in Pemantle's survey on reinforced processes~\cite{pemantle2007survey}.

	\begin{definition}\label{def:G-urn}
		Let $G$ be a function from $[0,1]$ to $[0,1]$. 
		The integer-valued process $(X_n)_{n\geq 0}$ is a \textit{$G$-urn} if, almost surely, $X_0 = 1$ and for every $n\geq 0$, $X_{n+1} \in \{X_n, X_n + 1\} $ and 
		\[\Prob{X_{n+1} = X_n + 1 \middle| X_0, \ldots, X_n} = G(\hat{X}_n)\]
		where, for all $n\geq 0$,  $\hat{X}_n = {X_n}/{n}$ (the results below are true for any normalization $\hat{X}_n = {X_n}/{(n+c)}$, with $c\geq0$).
	\end{definition}

	\begin{proposition}[\cite{pemantle2007survey} and Proposition 2.1 in \cite{kious2021tracereinforced}]\label{prop:urnstochdomin}
		Let $\left(X_n\right)_{n\geq0}$ be a $G$-urn process, with $G$ a $\mathcal{C}^1$-function. Then almost surely $\left(X_n\right)_{n\geq0}$
		converges towards a stable fixed point of $G$, that is a (possibly random) point $p\in [0, 1]$, such that $G(p) = p$
		and $G'(p) \leq 1$.
		
		In particular if there exists $c > 0$ $\eps >0$, such that $G(x) > (1+\eps)x$, for all $x \in (0, c)$
		, then almost surely $\liminf_{n \to \infty}X_n \geq c$.
	\end{proposition}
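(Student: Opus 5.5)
The plan is to view $\hat X_n$ as a one-dimensional stochastic approximation and apply the ODE method, handling separately the boundary points where the noise degenerates. Since $X_{n+1}-X_n\in\{0,1\}$, we can write
\[
\hat X_{n+1}-\hat X_n=\frac{1}{n+1}\bigl(G(\hat X_n)-\hat X_n+\xi_{n+1}\bigr),
\qquad \xi_{n+1}=(X_{n+1}-X_n)-G(\hat X_n).
\]
Here $(\xi_{n})$ is a sequence of martingale differences bounded by~$1$. Hence $M_n=\sum_{k\le n}\xi_k/k$ is an $L^2$-bounded martingale, and it converges almost surely. By Benaïm's theorem~\cite{benaim2006dynamics}, the limit set of $(\hat X_n)$ is then almost surely a compact, connected, internally chain-transitive set of the flow of $\dot x=G(x)-x$ on $[0,1]$. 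For a one-dimensional $\mathcal C^1$ vector field, such sets are either a single zero of $F(x)=G(x)-x$ or a nondegenerate interval on which $F\equiv 0$.

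The next step rules out a nondegenerate limit interval $[a,b]$. Once $\hat X_n$ lies in a neighbourhood of $(a,b)$, the drift there is $o(1)$ in the sense that $F$ vanishes on $[a,b]$. Writing $\hat X_n=\hat X_m+\sum_{k=m}^{n-1}F(\hat X_k)/(k+1)+(M_n-M_m)$, the martingale part has vanishing oscillation. The drift outside $[a,b]$ points back towards $[a,b]$ but cannot carry $\hat X_n$ across it. So $\hat X_n$ cannot oscillate between $a$ and $b$, and it converges to some $p$ with $G(p)=p$.

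It then remains to exclude limits $p$ with $G'(p)>1$. For $p\in(0,1)$, the conditional variance of $\xi_{n+1}$ near $p$ is $G(p)(1-G(p))=p(1-p)>0$. The standard non-convergence theorem for unstable points (Pemantle~\cite{pemantle2007survey}) then applies: non-degenerate noise at a repelling zero gives $\mathbb P(\hat X_n\to p)=0$. I expect the endpoints $p\in\{0,1\}$ to be the main obstacle, because the noise degenerates there. At $p=0$ with $G'(0)>1$, I would use the comparison argument below. The case $p=1$ is symmetric, obtained by applying the same argument to $n-X_n$.

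For the ``in particular'' statement, suppose $G(x)>(1+\eps)x$ on $(0,c)$. Then no fixed point lies in $(0,c)$, so it suffices to show that $\hat X_n\not\to0$ and that the limit cannot lie strictly below~$c$. I would argue as follows. On the event $\{\hat X_k<c \text{ for all } k\ge m\}$, we have $\mathbb E[X_{k+1}\mid\mathcal F_k]\ge X_k\bigl(1+\tfrac{1+\eps}{k}\bigr)$. Set $\pi_n=\prod_{k=m}^{n-1}\bigl(1+\tfrac{1+\eps}{k}\bigr)\asymp n^{1+\eps}$. Then $X_{n}/\pi_n$ is a positive submartingale on this event, with increments of conditional variance $O(X_k/\pi_{k+1}^2)$. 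Since $X_k\le k$, these variances are summable, so $X_n/\pi_n$ converges to a limit. Because $X_m\ge1$, the limit should be strictly positive with probability bounded below; to make this rigorous I would use a second-moment bound on the martingale part. This forces $X_n\gtrsim n^{1+\eps}$, which contradicts $X_n\le cn$. Applying this argument after every time $m$ and using Lévy's $0$--$1$ law, we conclude that almost surely $\hat X_n\ge c$ infinitely often. Combined with convergence of $\hat X_n$ to a fixed point, this gives $\liminf\hat X_n\ge c$.
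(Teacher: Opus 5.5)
The paper does not prove this proposition; it quotes it from Pemantle's survey and from Kious--Mailler--Schapira. So the question is whether your reconstruction closes on its own.

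\textbf{What works.} The convergence part is fine: Bena\"im's theorem, the fact that a compact connected internally chain-transitive set of a one-dimensional flow is a zero or an interval of zeros, and Pemantle's theorem at interior repelling zeros. Your reason for excluding an interval $[a,b]$ of zeros is off, since the drift just outside $[a,b]$ need not point inward. The correct reason is immediate: a crossing of the middle third of $[a,b]$ happens while $F\equiv 0$, so it equals $M_{n'}-M_n$ up to $O(1/n)$. That is impossible for large $n$ because $(M_n)$ converges.

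\textbf{The gap.} The genuine gap is at the degenerate boundary zero $p=0$ with $G(0)=0$. This is the only case in which the ``in particular'' statement has content, and it is the case behind the paper's applications. Your key sentence, that the limit of $X_n/\pi_n$ ``should be strictly positive with probability bounded below'' because $X_m\ge 1$, is asserted, not proved. The estimates you write down cannot prove it.

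With $\pi_m=1$ and only $X_k\le k$, the martingale part of the stopped process $X_{n\wedge T}/\pi_{n\wedge T}$ has conditional variance of order $\sum_{k\ge m}k(m/k)^{2+2\varepsilon}\asymp m^2/\varepsilon$. Using $G\le 1$ instead still gives $\asymp m$. This must be compared with the squared starting value $X_m^2$. Chebyshev then says nothing unless $\hat X_m$ is bounded below (respectively $X_m\gg\sqrt m$), which is exactly what is unavailable on $\{\hat X_n\to 0\}$. The first-moment bound for this bounded submartingale only gives $\mathbb P(T<\infty\mid\mathcal F_m)\gtrsim \hat X_m/c$. That also vanishes on the event in question, so L\'evy's $0$--$1$ law has nothing to work with.

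\textbf{How to close it.} If $G(0)>0$ there is nothing to prove. Otherwise, use $G(0)=0$ and $G\in\mathcal C^1$ once more. On a small interval $(0,c')$ you have $\lambda x\le G(x)\le Kx$ with $\lambda>1$ and $K<2\lambda$; take $\lambda,K$ close to $G'(0)\ge 1+\varepsilon$. Set $\pi_n=\prod_{k=m}^{n-1}(1+\lambda/k)$, stop at the exit time $T$ from $(0,c')$, and write $X_{n\wedge T}/\pi_{n\wedge T}=X_m+A_n+M_n$. Here $A$ is nondecreasing because $G(x)\ge\lambda x$, and $M$ is a martingale.

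The conditional variance of $X_{k+1}$ is at most $KX_k/k$, and $\mathbb E[X_{k\wedge T}\mid\mathcal F_m]\le X_m\prod_{j=m}^{k-1}(1+K/j)$. Hence
\[
\mathbb E\big[M_\infty^2\mid\mathcal F_m\big]\le C X_m\sum_{k\ge m}\frac1k\Big(\frac km\Big)^{K-2\lambda}\le C' X_m .
\]
On $\{T=\infty\}$ the left-hand side $X_n/\pi_n$ tends to $0$, because $X_n\le c'n$ and $\lambda>1$. This forces $M_\infty\le -X_m$ there, so $\mathbb P(T=\infty\mid\mathcal F_m)\le C'/X_m$.

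Almost surely $X_m\to\infty$, by conditional Borel--Cantelli, since $\sum_k G(j/k)=\infty$ for each fixed $j$. This gives $\mathbb P(\hat X_n\to 0)=0$. The ``in particular'' part then follows from the convergence part exactly as you say, because $G(c)\ge (1+\varepsilon)c$ leaves no fixed point in $(0,c]$.
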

	
	\begin{corollary}[Corollary 2.2 in \cite{kious2021tracereinforced}] \label{cor:urnstochdomin}
		Let $\left(X_n\right)_{n\geq0}$ be an integer-valued process adapted to some filtration $\left(\mathcal{F}_n\right)_{n\geq0}$, such that almost surely for all $n\geq0$, $X_{n+1} \in \{X_n , X_n + 1\}$, $X_0 = 1$, and for some function $G: [0, 1] \to [0, 1]$,
		$$\Prob{X_{n+1} = X_n + 1 \middle| \mathcal{F}_n} \geq G(\hat{X}_n).$$
		If there exist $c > 0,\eps>0$, such that $G(x) > (1 + \eps)x$, for all $x \in (0, c)$, then almost surely $\liminf_{n \to \infty} \hat{X}_n \geq c$.
	
	\end{corollary}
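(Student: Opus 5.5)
The plan is to prove Corollary~\ref{cor:urnstochdomin} by coupling $(X_n)_{n\geq0}$ with a genuine $g$-urn $(Y_n)_{n\geq0}$ so that $Y_n\leq X_n$ for all $n$ almost surely, and then to apply Proposition~\ref{prop:urnstochdomin} to $(Y_n)$. Two points need care. First, $G$ is not assumed to be $\mathcal C^1$, so it has to be replaced by a smooth minorant $g$. Second, $G$ is not assumed to be monotone, so the coupling has to work without comparing $G$ at two different arguments.

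\textbf{Step 1: a smooth minorant.} Fix $c'\in(0,c)$ and let $\varphi:[0,1]\to[0,1]$ be a $\mathcal C^1$ cutoff equal to $1$ on $[0,c']$ and to $0$ on $[c,1]$. Set
\[
g(x)=\Bigl(1+\frac{\eps}{2}\Bigr)x\,\varphi(x).
\]
Then $g$ is $\mathcal C^1$ with values in $[0,1]$ (shrinking $\eps$ if necessary so that $(1+\eps)c\leq1$), and $g\leq G$ on $[0,1]$:
\begin{itemize}
\item on $(0,c)$ we have $g(x)\leq(1+\eps/2)x<G(x)$;
\item on $[c,1]$ and at $0$ we have $g=0\leq G$.
\end{itemize}
Moreover $g(x)>(1+\eps/3)x$ for all $x\in(0,c')$.

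\textbf{Step 2: the coupling.} Enlarge the probability space to carry i.i.d.\ uniform variables $U_1,U_2,\dots$ on $[0,1]$, and realise the increments of $X$ by
\[
X_{n+1}=X_n+\mathbf 1\{U_{n+1}\leq p_n\},\qquad p_n:=\Prob{X_{n+1}=X_n+1\mid\mathcal F_n}.
\]
This is done in the usual way by passing to the filtration generated by $\mathcal F_n$ and the $U$'s, which preserves the law of $X$. Define $Y_0=1$ and
\[
Y_{n+1}=Y_n+\mathbf 1\{U_{n+1}\leq g(\hat Y_n)\},
\]
using the same normalisation as for $X$. Then $(Y_n)$ is a $g$-urn.

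We show $Y_n\leq X_n$ by induction on $n$; the base case is $Y_0=X_0=1$. Suppose $Y_n\leq X_n$.
\begin{itemize}
\item If $Y_n<X_n$, then, since both processes are integer-valued with increments in $\{0,1\}$, we get $Y_{n+1}\leq Y_n+1\leq X_n\leq X_{n+1}$.
\item If $Y_n=X_n$, then $\hat Y_n=\hat X_n$, so $g(\hat Y_n)\leq G(\hat X_n)\leq p_n$. Hence $Y$ can only increase when $X$ does, and $Y_{n+1}\leq X_{n+1}$.
\end{itemize}
Integrality is exactly what removes the need for any monotonicity of $G$: we only ever compare $g$ and $G$ at the same point.

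\textbf{Step 3: conclusion.} By the ``in particular'' part of Proposition~\ref{prop:urnstochdomin}, applied to $g$ with constants $c'$ and $\eps/3$, we get $\liminf_n \hat Y_n\geq c'$ almost surely. Hence $\liminf_n\hat X_n\geq c'$ almost surely. Letting $c'\uparrow c$ along a countable sequence gives $\liminf_{n\to\infty}\hat X_n\geq c$ almost surely.

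The step I expect to need the most care is the construction of the coupling in Step~2 on a common filtration, so that $Y$ is really a $g$-urn with respect to its own history while $X$ keeps its original conditional increment probabilities. Once that is set up, the remaining steps are routine.
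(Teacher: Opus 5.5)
Your proof is correct. The paper gives no argument of its own for this statement. It imports it from \cite{kious2021tracereinforced} and uses it as a black box, so the only thing to compare with is Proposition~\ref{prop:urnstochdomin}. Your reduction to that proposition is the natural one. The $\mathcal C^1$ minorant $g\le G$ handles the lack of regularity of $G$. Integrality of $X$ and $Y$ means you only ever compare $g$ and $G$ at the same point, so no monotonicity of $G$ is needed.

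The step you flag should be written out. It is cleaner to build the uniforms from $X$ than to ``re-realise'' $X$:
\begin{itemize}
\item Take $V_1,V_2,\dots$ i.i.d.\ uniform on $[0,1]$ and independent of $\mathcal F_\infty$, and let $\mathcal G_n=\mathcal F_n\vee\sigma(V_1,\dots,V_n)$.
\item Set $U_{n+1}=p_nV_{n+1}$ on $\{X_{n+1}=X_n+1\}$ and $U_{n+1}=p_n+(1-p_n)V_{n+1}$ otherwise.
\item Since $\mathbb P(X_{n+1}=X_n+1\mid\mathcal G_n)=p_n$ and $V_{n+1}$ is independent of $\mathcal F_\infty\vee\sigma(V_1,\dots,V_n)$, you get $\mathbb P(U_{n+1}\le u\mid\mathcal G_n)=\min(u,p_n)+(u-p_n)^+=u$.
\end{itemize}
So $U_{n+1}$ is $\mathcal G_{n+1}$-measurable, uniform, and independent of $\mathcal G_n$, and $\{U_{n+1}\le p_n\}=\{X_{n+1}=X_n+1\}$ almost surely. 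Hence $Y$ is a $g$-urn, while $X$ and its filtration are left untouched.

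Two minor remarks:
\begin{itemize}
\item Use the normalisation $\hat X_n=X_n/(n+1)$, which Definition~\ref{def:G-urn} allows. With $X_n/n$ the argument of $G$ and of $g$ can exceed $1$.
\item You need not shrink $\eps$, because $G\le 1$ already forces $(1+\eps)c\le 1$.
\end{itemize}
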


	\subsection{The ODE method on stochastic approximations}

	In this section, we define stochastic approximations and state results from ``the ODE method'' that are useful in our proofs.
	For more information on stochastic approximation, we refer the reader to the book of Duflo~\cite{Duflo}, the book of Bena\"im~\cite{benaim2006dynamics}, and the survey of Pemantle~\cite{pemantle2007survey}.
	
	Note that there are different definitions for stochastic approximation: we choose the one that serves best our purpose:
	\begin{definition}\label{def:SA}
		A stochastic approximation is a process $(X_n)_{n\geq0}$, adapted to some filtration $(\mathcal{F}_n)_{n\geq0}$, with values in a convex compact subset $\mathcal{E} \subseteq \R^d$ ($d \geq 1$), and such that, for all $n\geq 0$,
		\begin{equation}
			X_{n+1} = X_n + \frac{F(X_n) + \xi_{n+1} + r_n}{n+1}, \text{ for all }n\geq 0, \label{eqn:def_stoch_approx}
		\end{equation}
		where the vector field $ F: \mathcal E \mapsto \R$ is some Lipschitz function, the noise $\xi_{n+1}$ is $\mathcal{F}_{n+1}$-measurable and satisfies $\E\left[\xi_{n+1}|\mathcal F_n\right]=0$, for all $n \geq 0$, and the remainder term $r_n$ is $\mathcal{F}_n$-measurable and satisfies almost
		surely { $\sum_n n^{-1} \|r(n)\| <\infty$}.
	\end{definition}

	Our next proposition states that $({\bf N}(n) = (N_1(n), N_2(n), N_3(n))_{n\geq 0}$ is a stochastic approximation. We need the following definitions first:
	for all $c = (c_1, c_2, c_3)\in [0, \infty)^3$, we let
	\begin{equation}\label{eqn:defP1}
		P_1(c) = 1-P_2(c)
		=\alpha \frac{c_1}{c_1+ \frac{c_2c_3}{c_2+ c_3}} 
		+ (1-\alpha) \frac{\frac{c_1c_3}{c_1 + c_3}}{\frac{c_1c_3}{c_1 + c_3}+c_2}
		= \frac{\alpha c_1c_2+ c_1c_3}{c_1c_2+c_1c_3+c_2c_3}.
	\end{equation}
	We also set
	\begin{equation}\label{eqn:defP3}
		P_3(c) 
		= \alpha \frac{\frac{c_2c_3}{c_2 + c_3}}{c_1 + \frac{c_2c_3}{c_2 + c_3}} + (1-\alpha) \frac{\frac{c_1c_3}{c_1 + c_3}}{\frac{c_1c_3}{c_1 + c_3}+c_2}
		= \frac{\alpha c_2c_3+(1-\alpha)c_1c_3}{c_1c_2+c_1c_3+c_2c_3}.
	\end{equation}
	Finally, for all $w\in [0, \infty)^3$, we set
	\begin{equation}\label{eqn:defp}
		p(w) = P\bigg(\frac{w_1}{\ell_1}, \frac{w_2}{\ell_2}, \frac{w_3}{\ell_3}\bigg). 
	\end{equation}

	The following lemma is useful to prove that the ants process is a stochastic approximation:
	\begin{lemma}\label{lemma:inequalities_for_the_nests} 
		Almost surely, 
		\[
		\liminf_{n\to\infty} \frac{N_1(n) + N_3(n)}{n+2} \geq \alpha 
		\quad\text{ and }\quad
		\liminf_{n\to\infty} \frac{N_2(n) + N_3(n)}{n+2} \geq 1-\alpha.\]
	\end{lemma}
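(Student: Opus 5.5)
The plan is to compare $N_1(n)+N_3(n)$ with the number of ants that start from $\nest_1$, and then apply the strong law of large numbers. No urn or stochastic approximation argument should be needed. For all $n\geq 1$, let $A(n)=\sum_{k=1}^n {\bf 1}_{X_0^{\sss (k)}=\nest_1}$ be the number of ants among the first $n$ that start at $\nest_1$. By step (a) of the definition, the indicators ${\bf 1}_{X_0^{\sss (k)}=\nest_1}$ are i.i.d.\ Bernoulli$(\alpha)$, independent of everything else. The strong law of large numbers therefore gives $A(n)/n\to\alpha$ almost surely, and hence $A(n)/(n+2)\to\alpha$.

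The key step is the deterministic inequality $N_1(n)+N_3(n)\geq A(n)$ for all $n$. It suffices to show that every ant starting at $\nest_1$ reinforces at least one edge of $G_1$ or at least one edge of $G_3$, so that it increments $N_1$ or $N_3$ at that time.

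To see this, note that by construction $\gamma_k$ is a loop-erased path, hence a simple path from $X_0^{\sss (k)}$ to $\food$. In a triangle-SP graph, the only vertices shared between $G_1$, $G_2$ and $G_3$ are $\nest_1$, $\nest_2$ and $\food$. Consequently the first edge of a simple path leaving $\nest_1$ lies either in $E_1$ or in $E_3$, because $\nest_1\notin V_2$ when $\ell_3\geq 1$. Either way, $G_1$ or $G_3$ is reinforced at time $k$. Some times counted by $N_1+N_3$ come from ants starting at $\nest_2$, and an ant may even increment both $N_1$ and $N_3$. This only helps the inequality, since we need a lower bound. This is the observation already stated in the remark after Theorem~\ref{thm:trianglesSP} that $\gamma_n$ has only four possible forms.

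Combining the two steps gives $\liminf_{n} (N_1(n)+N_3(n))/(n+2)\geq \lim_n A(n)/(n+2)=\alpha$ almost surely. The second inequality follows symmetrically. Every ant started at $\nest_2$ uses a first edge in $E_2$ or $E_3$, so $N_2(n)+N_3(n)\geq n-A(n)$, and $(n-A(n))/(n+2)\to 1-\alpha$.

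The only point requiring care is the structural claim that the first edge lies in $E_1\cup E_3$, respectively in $E_2\cup E_3$. This follows from $\ell_i\geq 1$: it ensures the three series-parallel pieces meet only at the three marked vertices, so that $\nest_1$ is incident only to edges of $G_1$ and $G_3$.
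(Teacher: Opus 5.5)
Your proof is correct and is essentially the paper's own argument. The paper likewise bounds $N_1(n)+N_3(n)$ (resp.\ $N_2(n)+N_3(n)$) from below by the number of ants among the first $n$ that start at $\nest_1$ (resp.\ $\nest_2$), and then concludes with the strong law of large numbers; your justification that the first edge of $\gamma_k$ lies in $E_1\cup E_3$ just spells out a step the paper asserts without proof.
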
 
	This implies that, almost surely for all $n$ large enough,
	\begin{equation}\label{eq:liminfs}
		\frac{N_1(n) + N_3(n)}{n+2} \geq \frac\alpha2
		\quad\text{ and }\quad
		\frac{N_2(n) + N_3(n)}{n+2} \geq \frac{1-\alpha}2.
	\end{equation}
	
	\begin{proof}
		For all $n\geq 1$, we let $A(n)$ be the event that the $n$-th ant starts its walk at $\nest_1$.
		If $A(n)$ occurs then either $N_1(n+1) = N_1(n)+1$ or $N_3(n+1) = N_3(n)+1$, implying that
		\[N_1(n)+N_3(n) \geq \sum_{i=1}^n {\bf 1}_{A(i)}.\]
		Similarly,
		\[N_2(n)+N_3(n) \geq \sum_{i=1}^n \big(1-{\bf 1}_{A(i)}\big).\]
		By definition, $({\bf 1}_{A(i)})_{i\geq 1}$ is a sequence of i.i.d.\ random variables, whose distribution is the Bernoulli distribution of parameter $\alpha$.
		Thus, the conclusion follows by the strong law of large numbers, which implies that
		\[\frac1n\sum_{i=1}^n {\bf 1}_{A(i)}\to \alpha\quad\text{ and }\quad
		\frac1n\sum_{i=1}^n \big(1-{\bf 1}_{A(i)}\big) \to 1-\alpha,
		\]
		almost surely as $n\uparrow\infty$.
	\end{proof}
	
	We finally define 
		\begin{equation}\label{eq:def_E}
		\mathcal E 
		= \big\{(w_1, w_2, w_3)\in [0,1]^3\colon w_2 = 1-w_1\big\}.
	\end{equation}

	\begin{proposition}\label{prop:isastochapprox}
		For all $n\geq 1$, we let, for all $i=1,2,3$, $\hat N_i(n) = N_i(n)/n$ and $\hat{\bf N}(n) = (\hat N_1(n), \hat N_2(n), \hat N_3(n))$, where we recall that $N_i(n)$ is the number of steps up to time $n$ when the weight of at least one edge in $G_i$ increased by~1 in the ants process on a triangle-SP graph. 
		For all $n\geq 0$, we let $\mathcal F_n = \sigma({\bf W}(0), \ldots, {\bf W}(n))$.
		There exists a random integer $n_0$ such that the process $(\hat{\bf N}(n))_{n\geq n_0}$ is a stochastic approximation on 
		\begin{equation}\label{eq:def_E'}
			\mathcal E' 
			= \big\{(w_1, w_2, w_3)\in [0,1]^3\colon w_2 = 1-w_1, w_1+w_3\geq \alpha/2, \text{ and }w_2+w_3\geq (1-\alpha)/2\big\}.
		\end{equation}
		More precisely, for all $n\geq 1$,
		\begin{equation}
			{\hat{\bf N}}(n+1) 
			= {\hat{\bf N}}(n) + \frac{1}{n+1}\left(F({\hat{\bf N}}(n)) + \xi(n+1) + r(n)\right) \label{eqn:isastochapprox}
		\end{equation}
		where, for all $i=1,2,3$,
		\[F_i(w_1, w_2, w_3) = P_i\bigg(\frac{w_1}{\ell_1}, \frac{w_2}{\ell_2}, \frac{w_3}{\ell_3}\bigg)-w_i,\]
		\[\xi_i(n + 1)=  \ind{N_i (n + 1) = N_i (n) + 1} - \mathbb P(N_i (n + 1) = N_i (n) + 1|\mathcal F_n),\] 
		and 
		\[r_i(n) = \mathbb P(N_i (n + 1) = N_i (n) + 1|\mathcal F_n) - p_i(\hat{\bf N}(n)).\]
	\end{proposition}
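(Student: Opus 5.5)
The recursion \eqref{eqn:isastochapprox} is essentially an algebraic identity, so the plan is to check the three requirements of Definition~\ref{def:SA}: a Lipschitz vector field on a convex compact set, martingale-difference noise, and a summable remainder. First, since $N_i(n+1)-N_i(n)\in\{0,1\}$, we can write
\[\hat N_i(n+1)=\hat N_i(n)+\frac{{\bf 1}_{N_i(n+1)=N_i(n)+1}-\hat N_i(n)}{n+1}.\]
Adding and subtracting $\mathbb P(N_i(n+1)=N_i(n)+1\mid\mathcal F_n)$ and $p_i(\hat{\bf N}(n))$ gives exactly $F_i+\xi_i(n+1)+r_i(n)$, with $F_i(w)=p_i(w)-w_i$. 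By construction $\xi(n+1)$ is $\mathcal F_{n+1}$-measurable with $\E[\xi(n+1)\mid\mathcal F_n]=0$, and $r(n)$ is $\mathcal F_n$-measurable.

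Next, define the random time $n_0$ using Lemma~\ref{lemma:inequalities_for_the_nests}. By \eqref{eq:liminfs}, for $n\ge n_0$ the inequalities $w_1+w_3\ge\alpha/2$ and $w_2+w_3\ge(1-\alpha)/2$ hold, up to the harmless replacement of $n+2$ by $n$, which I would absorb by adjusting constants or the normalisation. Since $N_1(n)+N_2(n)=n$, we also have $w_2=1-w_1$, so $\hat{\bf N}(n)\in\mathcal E'$ for all $n\ge n_0$. The set $\mathcal E'$ is an intersection of half-spaces with a cube and is therefore convex and compact.

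To see that $F$ is Lipschitz on $\mathcal E'$, it suffices to bound the denominator $c_1c_2+c_1c_3+c_2c_3$ away from $0$ there, with $c_j=w_j/\ell_j$. This follows by cases. If $w_3$ is small, then $w_1\ge\alpha/4$ and $w_2\ge(1-\alpha)/4$, so $c_1c_2$ is bounded below. Otherwise $w_3$ is bounded below, and one of $w_1,w_2$ is at least $1/2$, so the corresponding product with $c_3$ is bounded below. Hence $P$ is a rational function with numerator and denominator polynomial on $\mathcal E'$ and denominator bounded below, and is therefore Lipschitz there.

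The real content is the remainder condition $\sum_n n^{-1}\|r(n)\|<\infty$. I would first compute $\mathbb P(N_i(n+1)=N_i(n)+1\mid\mathcal F_n)$ exactly as $P_i(C_1(n),C_2(n),C_3(n))$, where $C_j(n)$ is the effective conductance of $G_j$ under ${\bf W}(n)$. The reason is the description of $\gamma_{n+1}$ in item~\ref{item:LE}: tracing back from $\food$, each vertex is left via the edge by which it was first entered. Every internal vertex of a series-parallel $G_j$ can only be entered through its source or sink, so $\gamma_{n+1}$ uses $G_2$ if and only if the walk first reaches $\food$ through $G_2$, and similarly for $G_1$; and $\gamma_{n+1}$ uses $G_3$ if and only if the walk crosses from its starting nest to the other nest before reaching $\food$. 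These hitting probabilities follow from \eqref{eqn:conductances_proba} after reducing each $G_j$ to a single edge of conductance $C_j(n)$, which yields formulas \eqref{eqn:defP1}--\eqref{eqn:defP3}. Then, by homogeneity of degree $0$ of $P$ together with the Lipschitz bound above, $\|r(n)\|\le K\sum_j\big|C_j(n)/n-N_j(n)/(n\ell_j)\big|$.

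The main obstacle is therefore a quantitative estimate $C_j(n)=N_j(n)/\ell_j+O(N_j(n)^{1-\delta})$ for some $\delta>0$, where the constant may be random. Summability of $\|r(n)\|/n$ follows from this estimate. To obtain it, I would observe that the restriction of the process to $G_j$, time-changed by $N_j$, is a single-nest loop-erased ants process on $G_j$. For $G_3$ I still need to check that traversals in both directions give the same law, using reversibility and the symmetry of series-parallel conductance. I would then extract from the proof of Theorem~1.3 of~\cite{kious2020finding} (comparison with $G$-urns as in Proposition~\ref{prop:urnstochdomin} and Corollary~\ref{cor:urnstochdomin}) a polynomial rate at which edges off the geodesics stay $O(N_j^{1-\delta})$. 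On the geodesic part, series composition gives conductance approximately $N_j/\ell_j$. If the available rate is not polynomial, my fallback is a weaker version of the stochastic-approximation theorem that only requires $r(n)\to0$; this version is valid for asymptotic pseudo-trajectories in Bena\"im's framework.
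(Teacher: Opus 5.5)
Your decomposition, the choice of $n_0$ (no adjustment is needed there, since $x/n\ge x/(n+2)$), the martingale-difference checks and the identification $\mathbb P(N_i(n+1)=N_i(n)+1\mid\mathcal F_n)=P_i({\bf C}(n))$ all coincide with the paper. Your lower bound on $c_1c_2+c_1c_3+c_2c_3$ over $\mathcal E'$ justifies the Lipschitz property more carefully than the paper's one-line remark. There is one slip. $\gamma_{n+1}$ uses $G_3$ not when the walk visits the other nest before the food, since it may do so and then come back. It uses $G_3$ when the walk \emph{enters} the food through the subgraph attached to the other nest, and only this criterion yields \eqref{eqn:defP3}.

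The genuine difference is in how you bound $r(n)$. The paper works multiplicatively: it writes $C_i(n)=\frac{N_i(n)}{\ell_i}(1\pm\Delta_n)$ and applies the monotonicity bound \eqref{eq:lip}. For $\Delta_n$ to decay polynomially in $n$ it needs $N_i(n)\ge n^{\varepsilon_i}$, i.e.\ Lemma~\ref{lemma:Ni_poly} and its P\'olya-urn coupling. You work additively, and Lemma~\ref{lem:bounds_KMS} gives
\[|C_j(n)-N_j(n)/\ell_j|\le (K_jN_j(n)^{\alpha_j}+A_j)/\ell_j\le (K_jn^{\alpha_j}+A_j)/\ell_j,\]
using $N_j(n)\le n$. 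Hence $\|{\bf C}(n)/n-(\hat N_j(n)/\ell_j)_j\|=O(n^{-1/(\max_j\ell_j+1)})$ however small $N_j(n)$ is, and Lipschitz continuity of $P$ finishes the argument. This is correct and makes Lemma~\ref{lemma:Ni_poly} unnecessary for this proposition. You do need the Lipschitz bound on a neighbourhood of the rescaled $\mathcal E'$ rather than on $\mathcal E'$ itself, because ${\bf C}(n)/n$ does not lie in it; this is harmless, since the denominator stays bounded below nearby. What the paper's multiplicative route buys is Lemma~\ref{cor:bound_P_p}. That lemma is indispensable later, in Lemmas~\ref{lemma:minw1}--\ref{lemma:majw1}, where $p_i$ is close to $0$ and an additive $O(n^{-\delta})$ error would be useless, so the paper needs Lemma~\ref{lemma:Ni_poly} anyway.

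The inputs you defer are exactly what the paper supplies. The polynomial rate is Lemma~\ref{lem:bounds_KMS}, with $1-\alpha_j=1/(\ell_j+1)$. Your fallback $r(n)\to0$ is therefore unnecessary, and it would not satisfy Definition~\ref{def:SA} anyway. The single-nest reduction, including the two-way traversal of $G_3$, is Lemma~\ref{lemma:single_nest_in_multi_nest}, proved via the last-excursion bijection of Lemma~\ref{lem:Zoe} and the reversal Lemma~\ref{claim:LEforward_egal_LEbackward}.

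Reversibility and symmetry of effective conductance alone do not settle the $G_3$ issue. Loop erasure is not invariant under path reversal. Moreover, the reversal of a walk started at the source and stopped at the sink is not a walk started at the sink and stopped at the source. To close this step you need either those lemmas or Lawler's Green-function formula for the law of loop-erased walks, combined with your symmetry observation.
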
 
	
	The main difficulty in proving Proposition \ref{prop:isastochapprox} lies the following lemma, which we prove in Subsection \ref{subsect:proof_lemmeaux_isastochapprox}.
	\begin{lemma}\label{lemma:conv_rn}
		Let $(r(n))_{n\geq 1}$ be the sequence defined in Proposition~\ref{prop:isastochapprox}.
		Almost surely,
		\[\sum_{n\geq 1} n^{-1} \| r(n)\| <\infty.\]
	\end{lemma}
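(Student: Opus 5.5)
The plan is to identify the conditional probability $\mathbb P(N_i(n+1)=N_i(n)+1\mid\mathcal F_n)$ exactly as $P_i$ evaluated at the effective conductances of the weighted graphs $G_1,G_2,G_3$ at time $n$. The estimate then reduces to comparing these conductances with $N_i(n)/\ell_i$ at a polynomial rate. Throughout, $C_i(n)$ denotes the effective conductance of $G_i$ with edge weights ${\bf W}(n)$.

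\emph{Step 1: exact formula.} $G_1$ and $G_2$ share only the vertex $\food$. Suppose the $(n+1)$-th ant starts at $\nest_1$. If it first reaches $\food$ through an edge of $G_2$, then its last visit to $\{\nest_1,\nest_2\}$ before that time was at $\nest_2$. The backward loop erasure then produces a path through $G_3\cup G_2$; otherwise it produces a path through $G_1$. So we only need the probability that a walk from $\nest_1$ hits $\food$ "through $G_1$". Collapsing each $G_i$ to a single edge of conductance $C_i(n)$ and using \eqref{eqn:conductances_proba} (parallel/series rules on the triangle), this probability is exactly the first term of \eqref{eqn:defP1}. The same holds for a start at $\nest_2$, and for $G_3$ via \eqref{eqn:defP3}. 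Hence
\[\mathbb P(N_i(n+1)=N_i(n)+1\mid\mathcal F_n)=P_i(C_1(n),C_2(n),C_3(n)),\]
and therefore
\[r_i(n)=P_i(C(n))-P_i\big(\tfrac{N_1(n)}{n\ell_1},\tfrac{N_2(n)}{n\ell_2},\tfrac{N_3(n)}{n\ell_3}\big).\]
Since $P_i$ is homogeneous of degree $0$, we may replace the second argument by $(N_j(n)/\ell_j)_j$.

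\emph{Step 2: Lipschitz control.} The denominator $c_1c_2+c_1c_3+c_2c_3$ is bounded below by a constant times $n^2$ on the relevant region. This uses $N_1+N_2=n$ together with \eqref{eq:liminfs}: at least two of the $c_j$ are of order $n$. The same lower bound holds for the $C_j(n)$, since they will be shown to be close to $N_j(n)/\ell_j$. Consequently, for $n\ge n_0$,
\[\|r(n)\|\le K\,\max_j \big|C_j(n)-N_j(n)/\ell_j\big|/n .\]
It therefore suffices to show that, almost surely, $|C_j(n)-N_j(n)/\ell_j|\le n^{1-\delta}$ eventually, for some $\delta>0$.

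\emph{Step 3: conductance estimate (the main obstacle).} Observe the process only at the times when $G_j$ is reinforced. Then the weights restricted to $G_j$ evolve as a single-nest loop-erased ants process on the series-parallel graph $G_j$. For $G_3$ this requires checking that the direction of traversal ($\nest_1\to\nest_2$ or $\nest_2\to\nest_1$) does not matter, by symmetry of the conductance rule. Inside $G_j$, each reinforcement adds $1$ to every edge of one source-to-sink path. So the weight on a path of length $\ell$ behaves like a series resistance $\ell/(\text{number of traversals})$.

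I would induct on the series-parallel decomposition, in the spirit of \cite{kious2020finding}:
\begin{itemize}
\item \emph{Parallel merge of two subgraphs of different heights.} The share of reinforcements of the taller branch is a process dominated by a $G$-urn with $G(x)<(1-\eps)x$ near $0$. Using Proposition~\ref{prop:urnstochdomin} and a polynomial-rate version of the urn argument (e.g.\ comparison with $\hat X_n\le n^{-\delta}$), its count is $O(n^{1-\delta})$.
\item \emph{Geodesic parts.} Along geodesic parts, the weights are $N_j(n)+O(n^{1-\delta})$. The series/parallel formulas then give $C_j(n)=N_j(n)/\ell_j+O(n^{1-\delta})$, since each geodesic has length $\ell_j$ and the parallel geodesic branches add up.
\end{itemize}

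The delicate point is obtaining a polynomial rate, rather than mere convergence, for the urn comparison; and handling equal-height parallel branches, where no rate of splitting is needed because their conductances simply add. If a rate is unavailable, I would instead aim for $\sum_n n^{-2}|C_j(n)-N_j(n)/\ell_j|<\infty$ directly. That would follow from a bound like $O(n/\log^{2}n)$ on the error, obtained via a martingale law of the iterated logarithm on the urn.
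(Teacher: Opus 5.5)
Your Step~1 matches the paper. Your Step~2 is a genuinely different, and cheaper, way to control $r(n)$.

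\textbf{How the two routes differ.} The paper works \emph{multiplicatively}. By monotonicity of $P_i$ in each conductance, it shows $\frac{1-\delta}{1+\delta}P_i(y)\le P_i(x)\le \frac{1+\delta}{1-\delta}P_i(y)$ whenever $x_j/y_j\in[1-\delta,1+\delta]$. Lemma~\ref{lem:bounds_KMS} only gives a \emph{relative} error of order $K_jN_j(n)^{\alpha_j-1}$ between $C_j(n)$ and $N_j(n)/\ell_j$. This is polynomially small in $n$ only once one knows $N_j(n)\ge n^{\varepsilon_j}$. That is why the paper needs Lemma~\ref{lemma:Ni_poly}, whose proof uses a coupling with a modified process on a simpler graph plus P\'olya-urn asymptotics.

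You work \emph{additively}. The lower bound $c_1c_2+c_1c_3+c_2c_3\gtrsim n^2$ does follow from $N_1+N_2=n$ and \eqref{eq:liminfs}. It makes $\nabla P_i=O(1/n)$ along the relevant segment. So an absolute error $O(N_j(n)^{\alpha_j})\le O(n^{\alpha_j})$ suffices however small $N_j(n)$ is, and Lemma~\ref{lemma:Ni_poly} becomes unnecessary for this statement.

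What the paper's route buys in exchange is the multiplicative sandwich of Lemma~\ref{cor:bound_P_p}. The paper reuses it in the urn comparisons of Lemmas~\ref{lemma:minw1}, \ref{lemma:minw3} and~\ref{lemma:majw1}. There, an additive $O(n^{-\delta})$ error near the boundary would not directly give $G(x)>(1+\varepsilon)x$ on $(0,c)$.

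\textbf{Step~3 is incomplete, and needlessly so.} You do not need to redo the series-parallel induction or find a polynomial rate for an urn. Lemma~\ref{lem:bounds_KMS} already gives, almost surely for all $m$,
\[|\bar C_{G_j}(m)-m/\ell_j|\le (K_jm^{\alpha_j}+C)/\ell_j,\qquad \alpha_j=\ell_j/(\ell_j+1).\]
Lemma~\ref{lemma:single_nest_in_multi_nest} gives $C_j(n)=\bar C_{G_j}(N_j(n))$. Since $N_j(n)\le n$, this is your required bound, up to a random constant, with $\delta=1/(\max_j\ell_j+1)$.

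As written, your sketch leaves the polynomial rate open. The law-of-the-iterated-logarithm fallback is not an argument.

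\textbf{The direction issue in $G_3$.} Symmetry of conductances does not settle it. It only shows that the probability of crossing $G_3$ does not depend on the direction. What you need is that the law of the loop-erased path inside $G_3$ is invariant under reversal. That is the content of Lemma~\ref{claim:LEforward_egal_LEbackward}, proved via Lemma~\ref{lem:Zoe}, and it is what makes Lemma~\ref{lemma:single_nest_in_multi_nest} hold for $i=3$.

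With these two citations in place of Step~3, your Steps~1--2 give a complete proof that is shorter than the paper's.
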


	\begin{proof}[Proof of Proposition \ref{prop:isastochapprox}]
		First recall that because, by definition, $\gamma_n$ is a simple path for all $n\geq 1$, we have $N_1(n) + N_2(n) = n$, and $N_3(n)\leq n$. This implies that $\hat N_i(n)\in [0,1]$ for all $i=1,2,3$, and $\hat N_1(n)+\hat N_2(n) = 1$.
		Now, for all $i=1,2,3$, for all $n\geq 1$,
		\begin{align*}\frac{N_i(n+1)}{n+1} 
			&= \frac{N_i(n)+{\bf 1}_{N_i(n+1) = N_i(n)+1}}{n+1}
			= \frac{N_i(n)}{n}\cdot \frac{n}{n+1} + \frac1{n+1} \cdot {\bf 1}_{N_i(n+1) = N_i(n)+1} \\
			& =\frac{N_i(n)}{n} + \frac1{n+1} \big({\bf 1}_{N_i(n+1) = N_i(n)+1} - \hat N_i(n)\big).
		\end{align*}
		Now note that
		\[\mathbb P(N_i(n+1)= N_i(n)+1|\mathcal F_n)= P_i({\bf C}(n)),\]
		where ${\bf C}(n) = (C_1(n), C_2(n), C_3(n))$, with $C_i(n)$ is the effective conductance of between the source and the sink of~$G_i$ when we equip the edges in $G_i$ with their weights $(W_e(n))_{e\in E_i}$ and all the other edges in $G$ with weight~0.
		We thus write
		\begin{align*}
			{\bf 1}_{N_i(n+1) = N_i(n)+1} - \hat N_i(n)
			&= p_i(\hat{\bf N}(n)) - \hat N_i(n) \\ 
			& + {\bf 1}_{N_i(n+1) = N_i(n)+1}- P_i({\bf C}(n)) \\
			& + P_i({\bf C}(n)) - p_i(\hat{\bf N}(n)),
		\end{align*}
		and set $\xi(n+1) = {\bf 1}_{N_i(n+1) = N_i(n)+1}- P_i({\bf C}(n))$ and $r(n) = P_i({\bf C}(n)) - p_i(\hat{\bf N}(n))$.
		
		Now, to conclude the proof: By Lemma \ref{lemma:inequalities_for_the_nests}, there exists $n_0$ such that, for all $n\geq n_0$, $\hat{N}(n) \in \mathcal{E}'$. 
			Furthermore, $F$ is differentiable on this compact set, and thus Lipschitz on it.

		Moreover, it is straightforward that $\E\left[\xi_{n+1}|\mathcal F_n\right]=0$, $\xi_{n+1}$ is $\mathcal{F}_{n+1}$-measurable and $r(n)$ is $\mathcal{F}_{n}$-measurable; finally, by Lemma \ref{lemma:conv_rn}, $\sum_n n^{-1} \| r(n)\| <\infty$ almost surely.
	\end{proof}
		
	The limiting set of $(X_n)_{n\geq0}$ is defined as 
	$L(X) = \cap_{n \geq 0} \overline{\cup_{k\geq n}\{X_k\}}$. 
	The following proposition is a straightforward adaptation of~\cite[Corollary 2.6]{kious2021tracereinforced} (a version of it can also be found in~\cite{pemantle2007survey}):

	\begin{proposition} \label{prop:ODEandStochApproxCV} 
		Let $(X_n)_{n\geq 0}$ be a stochastic approximation on a convex compact $\mathcal E$ (as in Definition~\ref{def:SA}).
		Assume that there exists a deterministic constant $C >0$ such that $\sup_{n\geq1}  \|\xi_n\| \leq C$ almost surely. 
		Assume also that there exist $k\geq 1$, $p_1, \ldots, p_k\in \mathcal{U} \subset \mathcal E$ such that $L(X) \subseteq \mathcal{U}$ almost surely, and for every $w \in \mathcal{U}$, the solution of the ODE $\dot{y} = {F}(y)$ started at $w$ converges to a point in $\{p_1, \ldots, p_k\}$.	
		Then, almost surely, there exists $i\in \{1,\ldots, k\}$ such that $L(X) = \{p_i\}$, i.e.\ $\lim_{n\to\infty} X_n =p_i$.
	\end{proposition}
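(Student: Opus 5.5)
The plan is to follow the standard route of the ODE method (Bena\"im~\cite{benaim2006dynamics}), exactly as in the proof of~\cite[Corollary 2.6]{kious2021tracereinforced}, in three steps: (1) show that the affine interpolation of $(X_n)_{n\geq 0}$ is an asymptotic pseudotrajectory of the flow $\Phi$ of $\dot y = F(y)$; (2) deduce that $L(X)$ is a compact, connected, internally chain transitive (ICT) set for $\Phi$; (3) use the hypothesis on $\mathcal U$ to show that such a set must be one of the singletons $\{p_i\}$.

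\emph{Step 1.} Set $\tau_n = \sum_{k=1}^n 1/k$ and let $\bar X(t)$ be the piecewise-affine interpolation equal to $X_n$ at time $\tau_n$. By~\eqref{eqn:def_stoch_approx}, $\bar X$ is a perturbed solution of $\dot y = F(y)$. The perturbation is the sum of a noise part $\sum_k \xi_{k+1}/(k+1)$ and a remainder part $\sum_k r_k/(k+1)$. For the noise part, $M_n = \sum_{k\leq n} \xi_{k}/k$ is a martingale. Because $\|\xi_k\|\leq C$ almost surely, it is bounded in $L^2$, since $\sum_k C^2/k^2<\infty$, so it converges almost surely. In particular its increments over windows $[\tau_n,\tau_n+T]$ tend to $0$. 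The remainder part has vanishing increments over such windows because $\sum_n n^{-1}\|r_n\|<\infty$ almost surely. Since $F$ is Lipschitz on the convex compact set $\mathcal E$ and $X_n\in\mathcal E$, Bena\"im's Proposition~4.1 applies: $\bar X$ is an asymptotic pseudotrajectory.

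\emph{Step 2.} By Bena\"im's Theorem~5.7, $L(X)$ is almost surely nonempty, compact, connected, invariant and ICT for $\Phi$. We also have $L(X)\subseteq \mathcal U$ by hypothesis.

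\emph{Step 3, the main obstacle.} Let $L=L(X)$. By invariance and compactness, for every $w\in L$ the forward orbit of $w$ stays in $L$. By hypothesis it converges to some $p_i$, which therefore lies in $L$; hence $L$ contains at least one of the $p_i$. What must be excluded is that $L$ is a nontrivial ICT set: a union of several of the $p_i$ together with connecting orbits (heteroclinic chains or cycles), or a continuum through one $p_i$. The hypothesis ``every orbit from $\mathcal U$ converges to a point of $\{p_1,\dots,p_k\}$'' does not by itself rule out a heteroclinic cycle, so I would argue as follows.
\begin{itemize}
\item First try: take $w\in L$ that is not an equilibrium. Its backward orbit also lies in $L\subseteq\mathcal U$, so its $\alpha$-limit set is a compact invariant subset of $L$; combined with chain transitivity this produces a cycle of connecting orbits among the $p_i$.
\item Second, rule out such cycles by exhibiting a strict Lyapunov function, or a strict ordering of the $p_i$ along connecting orbits. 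One would then invoke Bena\"im's Proposition~6.4: if $V$ is a strict Lyapunov function for the finite equilibrium set and $V(\{p_1,\dots,p_k\})$ has empty interior, which holds since the set is finite, then every ICT set is contained in the equilibrium set.
\item Conclude: since the $p_i$ are finitely many, hence isolated, and $L$ is connected, this forces $L=\{p_i\}$ for some $i$, so $X_n\to p_i$.
\end{itemize}
Where no Lyapunov function is at hand, I would instead add ``no cycles of connecting orbits among the $p_i$'' as part of the adaptation of~\cite[Corollary 2.6]{kious2021tracereinforced}. In the intended application this should be checkable directly. There $\mathcal E'$ is effectively two-dimensional, in the coordinates $(w_1,w_3)$, so Poincar\'e--Bendixson-type arguments (or an explicit monotone quantity for $F$) can exclude heteroclinic cycles. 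Verifying this absence of cycles is the step I expect to be the delicate point; the rest is a direct citation of Bena\"im's results.
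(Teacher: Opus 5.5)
\textbf{Verdict: the proposal does not prove the stated proposition, but no argument can, because the proposition as written is false.} The failure is in your Step~3, exactly where you suspected.

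Your Steps~1 and~2 are the standard Bena\"im ODE-method argument. The interpolated process is an asymptotic pseudotrajectory, so $L(X)$ is almost surely compact, connected, invariant and internally chain transitive. The paper relies on this implicitly when it calls the statement a straightforward adaptation of \cite[Corollary 2.6]{kious2021tracereinforced}; it gives no further proof.

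Step~3 cannot be closed from the stated hypotheses. Here is a counterexample.
\begin{itemize}
\item Take $\mathcal E=\Sigma=\{x\in[0,1]^3\colon x_1+x_2+x_3=1\}$ and the replicator field $F_i(x)=x_i\big((Ax)_i-x\cdot Ax\big)$ with
\[A=\begin{pmatrix}0&-2&1\\1&0&-2\\-2&1&0\end{pmatrix},\]
a rock--paper--scissors game with $\det A<0$ \cite{hofbauer1998evolutionary}.
\item Every solution started on the relative boundary $\partial\Sigma$ converges to a vertex.
\item Every interior solution $y$ other than the barycentre has $\omega$-limit set equal to $\partial\Sigma$, which is the heteroclinic cycle $e_1\to e_2\to e_3\to e_1$.
\item Set $\tau_n=\sum_{k\le n}1/k$ and $X_n=y(\tau_n)$. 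Then $X_{n+1}=X_n+\frac{F(X_n)+r_n}{n+1}$ with $\xi\equiv 0$ and $\|r_n\|\le \mathrm{Lip}(F)\sup_\Sigma\|F\|/(n+1)$. So $(X_n)$ is a stochastic approximation in the sense of Definition~\ref{def:SA}.
\item Take $\mathcal U=\partial\Sigma$ and $p_1,p_2,p_3$ the vertices. All hypotheses hold, yet $L(X)=\partial\Sigma$ and $X_n$ does not converge.
\end{itemize}
A variant shows that even the global hypothesis $\mathcal U=\mathcal E$ does not help. Take $\det A>0$, so every solution converges to one of four equilibria. Follow the boundary flow and add deterministic kicks of summable size that push the iterate past each vertex along $\partial\Sigma$; again $L(X)=\partial\Sigma$.

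What your argument does prove is a corrected version, and the extra hypothesis you propose is the right one. State it in a usable form, for instance: there is a continuous $V$ on $\mathcal E$ with $V(y_w(t))<V(w)$ for all $t>0$ and all $w\in\mathcal U\setminus\{p_1,\dots,p_k\}$. Since $L(X)\subseteq\mathcal U$ is invariant, Bena\"im's Proposition~6.4 gives $L(X)\subseteq\{p_1,\dots,p_k\}$; its proof only uses $V$ on $L(X)$. Connectedness of $L(X)$ then forces a singleton.

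Drop the ``first try'' bullet. The $\alpha$-limit sets of points of $L(X)$ need not be equilibria. Passing from chain transitivity to a genuine cycle of connecting orbits is Conley theory: an acyclic family of isolated equilibria admits such a $V$. So assume acyclicity in that form rather than trying to derive it.

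The same caveat applies to the paper's use of the proposition in Corollary~\ref{cor:cv}, as you anticipated. Lemma~\ref{lemma:triangles_noorbits} only excludes periodic orbits; its appeal to Poincar\'e--Bendixson omits the polycycle alternative. In any case, convergence of every solution would not suffice. The absence of cyclic chains of connecting orbits among the zeros of $F$ in $\mathcal E'$ therefore has to be checked separately.
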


	\subsection{The single-nest process on series-parallel graphs}\label{subsect:SPetconductances}
	
	In this section, we state results that were proved in~\cite{kious2020finding} concerning the single-nest loop-erased model.

	\begin{lemma}[See~\cite{kious2020finding}, Equation (2) and Proposition 2.5]\label{lem:bounds_KMS}
		Let $({\bf W}(n))_{n\geq 0}$ be the (single-nest) ants process on a series-parallel graph $G$ whose source is the ants' nest and sink is the ants' source of food.
		Let $C_G(n)$ be the effective conductance between $\nest$ and $\food$ in $G$ equipped with edge-weights $({\bf W}(n))_{n\geq 0}$. Then, almost surely for all $n\geq 0$
		\begin{equation}\label{eqn:bound_effective_conductance}
			\frac{n}{h_{\max}(G)} \leq C_G(n) \leq \frac{n+C}{h_{\min}(G)} 
		\end{equation}
		where $C$ is a constant 
		and $h_{\max}(G)$ is defined as the length of a longest self-avoiding path from the source to the sink in $G$.
		Also, there exists an almost-surely finite random variable~$K$ such that, almost surely, for all $n\geq 0$,
		\[C_{G}(n) \geq \frac{n - K  n^\alpha}{h_{\min}(G)},\]
		where $\alpha = h_{\min}(G)/(h_{\min}(G)+1)$.
	\end{lemma}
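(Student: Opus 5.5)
The plan is to prove both conductance bounds by induction on the series-parallel decomposition of $G$, with a quantity that counts how many ants have crossed each sub-component. For a sub-SP-graph $H$ of $G$ (appearing in some decomposition of $G$), let $N_H(n)$ be the number of $k\le n$ such that $\gamma_k$ uses at least one edge of $H$.

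The first fact to check is structural. Since $\gamma_k$ is a simple path from $\nest$ to $\food$, and every sub-SP-graph is attached to the rest of $G$ only through its source and sink, the restriction of $\gamma_k$ to $H$ is either empty or a single simple source-to-sink crossing of $H$. Consequently:
\begin{itemize}
\item if $H$ is $H_1,H_2$ in series, then $N_H=N_{H_1}=N_{H_2}$;
\item if $H$ is $H_1,H_2$ in parallel, then $N_H=N_{H_1}+N_{H_2}$;
\item for a single edge, $W_e(n)=1+N_e(n)$.
\end{itemize}

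For the deterministic bounds \eqref{eqn:bound_effective_conductance}, I will prove by induction the stronger claim
\[\frac{N_H(n)}{h_{\max}(H)}\le C_H(n)\le \frac{N_H(n)+c_H}{h_{\min}(H)},\]
with $c_H$ constants ($c_e=1$). For a single edge this is immediate.

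In parallel, conductances add. For the upper bound, $h_{\min}(H)=\min_i h_{\min}(H_i)$ gives $C_{H_1}+C_{H_2}\le (N_{H_1}+N_{H_2}+c_{H_1}+c_{H_2})/h_{\min}(H)$. For the lower bound, $h_{\max}(H)=\max_i h_{\max}(H_i)$.

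In series, resistances add and $N_{H_1}=N_{H_2}=N_H$. So $1/C_H\ge h_{\min}(H_1)/(N_H+c_{H_1})+h_{\min}(H_2)/(N_H+c_{H_2})\ge (h_{\min}(H_1)+h_{\min}(H_2))/(N_H+\max c_{H_i})$, and $h_{\min}$, $h_{\max}$ are additive in series. The lower bound is symmetric. Applying the claim to $H=G$, where $N_G(n)=n$, gives \eqref{eqn:bound_effective_conductance}.

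For the refined lower bound, restrict to the subgraph $G^*$ formed by the edges lying on geodesics from source to sink. Rayleigh monotonicity, or directly the SP recursion since dropping parallel branches only lowers conductance, gives $C_G(n)\ge C_{G^*}(n)$. $G^*$ is itself SP with all self-avoiding source–sink paths of length $h_{\min}(G)$, so the induction above yields $C_{G^*}(n)\ge N_{G^*}(n)/h_{\min}(G)$, where $N_{G^*}(n)$ counts ants whose path is a geodesic. It therefore suffices to show that the number $M(n)=n-N_{G^*}(n)$ of non-geodesic ants satisfies $M(n)\le K n^{\alpha}$ with $\alpha=h_{\min}/(h_{\min}+1)$.

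To bound $M(n)$, I would again induct on the decomposition, looking at each parallel junction where one branch $H'$ has larger height $h'\ge h+1$ than the shortest one. Given $\mathcal F_n$, by \eqref{eqn:conductances_proba} the probability that the next ant crossing this junction enters $H'$ is at most $C_{H'}/C_{\text{total}}$. By the first part this is at most $\frac{(N_{H'}+c)/h'}{n/h_{\max}}$ — or better, using the inductive refined bound for the short branch, comparable to $\frac{h\,N_{H'}}{h'\,n}$. So $N_{H'}$ is dominated by a $G$-urn type process with drift $\hat X\mapsto \frac{h}{h+1}\hat X$. Hence $\hat N_{H'}\to0$, and more quantitatively $\E[N_{H'}(n)]=O(n^{h/(h+1)})$; a martingale plus Borel–Cantelli argument upgrades this to an almost sure bound with a random constant $K$.

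This last step is the main obstacle. The reinforcement probability depends on conductances of the short branch that are themselves only asymptotically $n/h$, so the argument must bootstrap: first get $\hat N_{H'}\to 0$ via Corollary~\ref{cor:urnstochdomin} applied to the complement, then use the resulting asymptotics to get the polynomial rate. Getting the exponent exactly $h_{\min}/(h_{\min}+1)$ requires carrying the error terms carefully through the series compositions.
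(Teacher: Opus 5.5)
Your induction for the deterministic bounds is correct. A simple nest-to-food path meets each sub-SP-graph in at most one source-to-sink crossing, so $N_H=N_{H_1}=N_{H_2}$ in series and $N_H=N_{H_1}+N_{H_2}$ in parallel, and $N_H/h_{\max}(H)\le C_H\le (N_H+c_H)/h_{\min}(H)$ propagates. Your reduction of the refined bound to $M(n)\le Kn^{\alpha}$ through the geodesic subgraph $G^*$ is also sound. For pieces of $G^*$, count the ants crossing the corresponding component of $G$ inside the pruned piece; in series this count for $H_i$ is at least the one for $H$, which is all the lower-bound induction needs.

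\textbf{The gap.} The estimate on $M(n)$ is the whole content of the second assertion, and the paper imports it from \cite{kious2020finding} rather than proving it. You only sketch it, and the sketch rests on something you never establish: that the process seen inside a sub-SP-graph $H$, at the times its edges are reinforced, is again the single-nest ants process on $H$. Concretely, you need that, given $\mathcal F_n$ and that $\gamma_{n+1}$ crosses a junction $J=H\parallel H'$, it does so through $H'$ with probability $C_{H'}/(C_H+C_{H'})$. You also need that the crossing inside the chosen branch has the law of a loop-erased walk on that branch alone. This does not follow from \eqref{eqn:conductances_proba}, which concerns a walk started at the source of $J$ and stopped at its sink. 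Here the ant leaves and re-enters $J$ through its source, the reinforced path is a loop-erasure of the whole trajectory, and conditioning on crossing through $H$ biases the earlier excursions into $H$. This is exactly the difficulty that \cite{kious2020finding} handle with their generalised model and that the present paper handles with Lemma~\ref{lem:Zoe}. Without it, ``the inductive refined bound for the short branch'' has nothing to apply to. Nor can you fall back on your first part: $h_{\max}(H)$ may exceed $h'$, and a drift factor $h_{\max}(H)/h'>1$ gives no decay.

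\textbf{Completing the junction step.} Granting the restriction property, your junction argument can be finished, but not quite as you describe. Markov plus Borel--Cantelli on $\mathbb E[N_{H'}(n)]=O(n^{h/h'})$ only gives the exponent up to a logarithmic factor, which matters in the critical case $h=h_{\min}(G)$, $h'=h+1$. Instead, index by the crossings of $J$ and use the predictable product $\pi_{m+1}=\pi_m\bigl(1+1/(h'C_H(m))\bigr)$. Then $(N_{H'}+c)/\pi$ is a nonnegative supermartingale, and its a.s. convergence gives $N_{H'}=O(\pi)$. This uses only the adapted $C_H$, not the non-adapted constant $K$. Your bootstrap (first $N_{H'}=o(N_J)$, then $O(N_J^{h/h'+\varepsilon})$) makes the errors in $1/(h'C_H)$ summable against $1/m$, so $\pi_m=O(m^{h/h'})$ pathwise. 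Summing over the finitely many junctions meeting $G^*$, each with short-branch height at most $h_{\min}(G)$, gives the exponent $h_{\min}/(h_{\min}+1)$. Series compositions need no extra care, since $M_{H_1\cdot H_2}\le M_{H_1}+M_{H_2}$ and $h_{\min}(H_i)\le h_{\min}(H_1\cdot H_2)$.
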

	
	We will apply this lemma to the two-nest process ``restricted'' to graphs $G_1$, $G_2$ and $G_3$ (the three series-parallel graphs in the triangle-SP graph of Figure~\ref{fig:triangleSP}).
	We let ${\bf W}^{\sss (i)}(n) = (W_e(n))_{e\in E_i}$, for all $i=1,2,3$, for all $n\geq 0$.
	Note that at every time step in the two-nest process on the triangle-SP graph, the set of edges whose weight increases by~1 is a simple path in either $G_1\cup G_3$, or a simple path in $G_2\cup G_3$. 
	Thus, $({\bf W}^{\sss (i)}(n))_{n\geq 0}$ is sometimes constant; we let $(\tau^{\sss (i)}_n)_{n\geq 0}$ be the increasing sequence of times at which the value of $({\bf W}^{\sss (i)}(n))_{n\geq 0}$ changes.
	
	\begin{lemma}\label{lemma:single_nest_in_multi_nest}
		For all $i=1,2,3$, the process $({\bf W}^{\sss (i)}(\tau_n^{\sss (i)}))_{n\geq 0}$ is the single-nest ants process on $G_i$ (with nest $\nest_i$ and source of food $\food$ if $i=1,2$; with nest $\nest_1$ and source of food $\nest_2$ if $i=3$).
	\end{lemma}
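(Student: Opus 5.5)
We argue by conditioning on the past and showing that, given $\mathcal F_{\tau^{\sss(i)}_n}$, the increment ${\bf W}^{\sss(i)}(\tau^{\sss(i)}_{n+1})-{\bf W}^{\sss(i)}(\tau^{\sss(i)}_n)$ is the indicator vector of the loop-erased path of a random walk on $G_i$ weighted by ${\bf W}^{\sss(i)}(\tau^{\sss(i)}_n)$, started at the nest of $G_i$ and stopped at its food vertex, and that it is independent of everything else. The conditional law of the next ant's path, given that it reinforces $G_i$, then depends only on the current weights inside $G_i$. The key structural fact is that $G_i$ meets the rest of $G$ only at its two poles. Every excursion of the ant's walk inside $G_i$ is therefore an excursion between two consecutive visits to $\{$source, sink$\}$ of $G_i$. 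Moreover, the transition probabilities from interior vertices of $G_i$ use only weights in $E_i$.

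\textbf{Case $i=1,2$.} Take $i=1$ (the case $i=2$ is symmetric). The loop-erased path $\gamma_{n+1}$ uses $G_1$ exactly when it is a path $\nest_1\to\food$ inside $G_1$, which happens whether the ant started at $\nest_1$ or reached $\nest_1$ via $G_3$. We decompose the forward walk at its successive visits to $\nest_1$. Backward loop erasure from $\food$ retains exactly the segment of the walk after its last visit to $\nest_1$ (up to loop erasure inside that segment). On the event $\{\gamma_{n+1}\cap E_1\neq\emptyset\}$, this last segment is a walk from $\nest_1$ that stays in the interior of $G_1$ until it hits $\food$. We then prove by a strong Markov/excursion argument that this segment has the law of a walk in $G_1$, restricted to $G_1$ started at $\nest_1$, conditioned to hit $\food$ before returning to $\nest_1$. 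The walk's loop-erased trace is the same as in the single-nest process. Indeed, the single-nest walk on $G_1$ also decomposes into excursions from $\nest_1$, and loop erasure keeps only the final excursion, which has the same conditioned law. Those excursion laws depend only on $(W_e)_{e\in E_1}$. Hence, given $\mathcal F_{\tau^{\sss(1)}_n}$ and given that step $\tau^{\sss(1)}_{n+1}$ is the next time $G_1$ is reinforced, the reinforced path has the single-nest law. Since the weights in $E_1$ do not change between $\tau^{\sss(1)}_n$ and $\tau^{\sss(1)}_{n+1}$, a standard concatenation argument over the stopping times gives equality in law of the whole sequence.

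\textbf{Case $i=3$.} This is the main obstacle, because ants traverse $G_3$ in both directions, from $\nest_1$ to $\nest_2$ and from $\nest_2$ to $\nest_1$. We would show that the loop-erased trace of a walk from $\nest_2$ to $\nest_1$ has the same law as the reversal of the trace from $\nest_1$ to $\nest_2$. We would use the classical reversibility of loop-erased random walk (Lawler): for a reversible walk, the loop erasure of the path from $a$ stopped at $b$ has the same law, reversed, as that of the path from $b$ stopped at $a$. Here we use the specific backward loop erasure of the paper, which corresponds to chronological loop erasure of the time-reversed path, and this must be matched carefully. With the same excursion decomposition, conditioned on crossing, the last crossing excursion of $G_3$ in either direction gives a path whose edge set has a law independent of direction. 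This law is that of the single-nest process with nest $\nest_1$ and food $\nest_2$. If reversibility in this exact form is hard to cite, we would instead prove it directly by induction on the series-parallel structure. In a series composition, the trace decomposes at the cut vertex into independent traces on each part. In a parallel composition, the branch chosen by the last crossing excursion has probability proportional to the effective conductance by \eqref{eqn:conductances_proba}, which is symmetric in the two poles. Both facts are direction-invariant, and so this induction closes the argument.
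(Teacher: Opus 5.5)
\textbf{There is a genuine gap: the step your whole argument rests on is a pathwise claim that is false for the loop erasure used in the model.} Your overall architecture is the same as the paper's. You decompose at the last visit to the nest, note that the law of the final excursion depends only on the weights in $G_i$, and treat $G_3$ by a reversal statement proved by series-parallel induction, using \eqref{eqn:conductances_proba} in the parallel step.

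You assert that the backward loop erasure ``retains exactly the segment of the walk after its last visit to $\nest_1$''. You also assert that for the single-nest walk on $G_1$ ``loop erasure keeps only the final excursion''. That is a pathwise property of chronological (forward) loop erasure. It does not hold for the loop erasure of item (c), which from each vertex follows the edge through which that vertex was \emph{first} entered.

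Here is a counterexample. Let $G_1$ be the paths $\nest_1 a b$ and $\nest_1 c b$ merged in parallel, followed in series by the edge $\{b,\food\}$. Take the walk $\nest_1, a, b, a, \nest_1, c, b, \food$.
\begin{itemize}
\item Its final excursion $\nest_1, c, b, \food$ has loop erasure $\{\nest_1,c\},\{c,b\},\{b,\food\}$.
\item The reinforced path is $\{\nest_1,a\},\{a,b\},\{b,\food\}$, because $b$ was first entered from $a$.
\end{itemize}

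This is not harmless. Conditioned to reach $\food$ through $G_1$, the walk on $G$ restricted to $G_1$ makes a geometric number of returning excursions into $G_1$. Its parameter involves the competing exit through $G_3\cup G_2$, so this walk is not distributed as the walk on $G_1$ alone. Since returning excursions genuinely affect $\gamma_{n+1}$, you cannot match the two processes excursion by excursion. What you need is that $\mathrm{LE}(X)$ and $\mathrm{LE}(S)$ are equal \emph{in distribution}, which is exactly Lemma~\ref{lem:Zoe}. The paper proves it with a bijection $\psi$ on paths that permutes oriented edges, so it preserves path probabilities, and sends $\gamma = us$ to a path whose loop erasure is $\mathrm{LE}(s)$.

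The same missing identity breaks your fallback induction for $i=3$. In the parallel step, the branch probability $C_{G_1}/(C_{G_1}+C_{G_2})$ is indeed symmetric in the two poles. But given the branch, the walk restricted to it is again not the walk on that branch. So you cannot apply the induction hypothesis without Lemma~\ref{lem:Zoe}, which is precisely its role in \eqref{eq:reason}. Likewise, for the model's loop erasure, the $G_3$-part of $\gamma_{n+1}$ is determined pathwise by the walk up to its \emph{first} arrival at the far nest, not by a ``last crossing excursion''. The two agree only in law.

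One repair for both cases is to cite Lawler's theorem: erasing loops of a Markov chain path chronologically, or chronologically along the time-reversed path, gives the same law. This identifies the model's loop erasure in law with forward loop erasure, for which your last-visit property does hold pathwise. Lawler's reversibility of loop-erased walk then covers $G_3$. As written, however, the proposal neither proves nor cites such a result.
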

			
		To prove Lemma~\ref{lemma:single_nest_in_multi_nest}, we first introduce some new notation and vocabulary:
		A path in a graph $G$ is a sequence of edges.
		Given a path $\gamma$ between $S$ and $T$ in a series-parallel graph with source $S$ and sink $T$, we let $\mathrm{LE}(\gamma)$ be the path $\gamma$ obtained when erasing loops when going back from $T$ to $S$ (as in the ants process; see Section~\ref{sub:def}, item (c)).
		For any path $\gamma$, we let $\bar{\gamma}$ be the reversed path of $\gamma$, i.e.\ the same sequence of edges, but listed in reverse order.
		Finally, we say that a path is simple if it does not contain any loop.
		
		The main difficulty in the proof of Lemma~\ref{lemma:single_nest_in_multi_nest} is that, if one only looks at the trajectories of the ants restricted to $G_3$, then they sometimes start at $\mathcal N_1$ (the source of $G_3$), and sometimes start at $\mathcal N_2$ (the sink of $G_3$).
		The following lemma allows us to show that, in distribution, this does not change the path reinforced inside $G_3$:
		
		\begin{lemma}\label{claim:LEforward_egal_LEbackward} 
			Let $G$ be a series-parallel graph with source $S$, sink $T$, and edge-weights $w=(w_e)_{e\in E}\in [0,\infty)^E$.
			We let $X$ (resp.\ $Y$) 
			be a random walk on $G$, started at $S$ (resp.\ $T$) and stopped when first hitting $T$ (resp.\ $S$), with transition probabilities proportional to the edge-weights $w$.
			For all simple paths $\gamma$ from $S$ to $T$ in $G$,
			\begin{equation}
				\mathbb P(\mathrm{LE}(X) = \gamma)  = 
				\mathbb P(\mathrm{LE}(Y) = \bar\gamma), 
				\label{eqn:LEforward_egal_LEbackward}
			\end{equation}
		\end{lemma}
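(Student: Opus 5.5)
We prove \eqref{eqn:LEforward_egal_LEbackward} by induction on the series-parallel decomposition of $G$. This fits the recursive definitions already used in the paper (effective conductances, \eqref{eqn:conductances_proba}). Another route would be Lawler's classical fact that loop-erased random walk between two points is reversible in law, i.e.\ that forward loop erasure has the same law as backward loop erasure on the time-reversed walk, combined with reversibility of weighted random walk. We prefer the induction because the SP structure makes every step explicit. If the weight of some edge is zero we remove that edge, so assume all weights are positive.

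\textbf{Base case.} If $G$ is a single edge, both sides of \eqref{eqn:LEforward_egal_LEbackward} equal $1$ for the unique simple path.

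\textbf{Series case.} Let $G$ be $G_a$ (from $S$ to $M$) followed by $G_b$ (from $M$ to $T$). A simple path $\gamma$ from $S$ to $T$ splits as $\gamma_a\gamma_b$, with $\gamma_a$ a simple path from $S$ to $M$ in $G_a$ and $\gamma_b$ a simple path from $M$ to $T$ in $G_b$. Take the walk $X$ from $S$. The backward loop erasure (erase loops going back from $T$, using the first edge crossed at each revisited vertex) is determined as follows.
\begin{itemize}
\item Its part in $G_b$ is the loop erasure of the excursion after the \emph{last} visit to $M$ before $T$.
\item Its part in $G_a$ is the loop erasure of the walk from $S$ until its \emph{first} hit of $M$.
\end{itemize}
The point is that the backward erasure jumps from $M$ to the first visit of $M$. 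By the strong Markov property, the walk up to its first hit of $M$ is a walk in $G_a$ from $S$ stopped at $M$. The last excursion from $M$ to $T$ is distributed as a walk from $M$ conditioned to hit $T$ before returning to $M$, i.e.\ a walk in $G_b$ from $M$ to $T$ conditioned not to return to $M$. These two pieces are independent. So $\mathbb P(\mathrm{LE}(X)=\gamma)$ factorises as
\[
\mathbb P_S^{G_a}(\mathrm{LE}=\gamma_a)\cdot \mathbb P_M^{G_b}(\mathrm{LE}=\gamma_b \mid \text{no return to } M).
\]
For $Y$ from $T$ the roles swap: the factors are the walk in $G_b$ from $T$ to $M$, and the walk in $G_a$ from $M$ to $S$ conditioned not to return to $M$. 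The remaining task is to show that the conditioned factor is the same as the unconditioned one. Concretely, let $Z$ be the walk from $M$ to $T$ in $G_b$ conditioned not to return to $M$; we need $\mathbb P(\mathrm{LE}(Z)=\gamma_b)=\mathbb P_M^{G_b}(\mathrm{LE}=\gamma_b)$, and the symmetric statement in $G_a$. This holds because the backward erasure of the unconditioned walk from $M$ to $T$ only sees its last excursion from $M$, which has the law of $Z$. Applying the induction hypothesis in $G_a$ and in $G_b$ then gives the equality of the products.

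\textbf{Parallel case.} Let $G$ be $G_a$ and $G_b$ in parallel, both from $S$ to $T$. Since each $G_i$ meets the rest of $G$ only at $S$ and $T$, the simple path $\gamma$ lies entirely in one branch, say $G_a$. Starting from $S$, the walk $X$ makes excursions from $S$ into the branches until it reaches $T$. The backward erasure keeps exactly the final excursion from the last visit to $S$ up to $T$; everything earlier is erased because it starts and ends at $S$. So
\[
\mathbb P(\mathrm{LE}(X)=\gamma) = q_a\cdot \mathbb P(\mathrm{LE}(X^{a})=\gamma \mid X^{a} \text{ avoids } S \text{ after time } 0),
\]
where $X^a$ is the walk in $G_a$ from $S$ to $T$ and $q_a$ is the probability that the successful excursion is in $G_a$. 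This probability is $q_a=C_{G_a}/(C_{G_a}+C_{G_b})$, by the escape-probability form of \eqref{eqn:conductances_proba}: the escape probability from $S$ in branch $a$ is proportional to $C_{G_a}$. By the last-excursion argument of the series case, the conditioned factor equals $\mathbb P(\mathrm{LE}(X^a)=\gamma)$. The same computation for $Y$ from $T$ gives the same $q_a$, since effective conductance is symmetric between source and sink. The induction hypothesis in $G_a$ then concludes.

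\textbf{Main obstacle.} The delicate step is the identity used in both cases: the backward loop erasure of a walk from $M$ to $T$ equals in law the loop erasure of that walk conditioned never to return to $M$. We will prove it with a last-exit decomposition. Write the path as a sequence of excursions from $M$ followed by a final escape piece; the strong Markov property makes the final piece independent of the number and content of the earlier excursions, with the conditioned law. We also check that the paper's definition of $i_{j+1}$ (jump back to the first visit) erases precisely those excursions.
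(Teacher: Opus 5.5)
There is a genuine gap, and it sits exactly at the step you flag as the main obstacle. Both your series and parallel cases rest on a pathwise claim: that the backward loop erasure of a walk started at $S$ (or $M$) only sees the last excursion from the starting vertex (``everything earlier is erased because it starts and ends at $S$''). You also plan to check that the rule $i_{j+1}=\min\{i-1: X_i=X_{i_j}\}$ erases precisely the earlier excursions. It does not. This rule jumps back to the \emph{first} visit of \emph{every} vertex met on the way back, not only of the starting vertex. So if the last excursion passes through a vertex already visited during an earlier excursion, the erased path is rerouted through that earlier excursion.

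For instance, take the series-parallel graph with edges $Su,uv,Sw,wv,vT$ and the trajectory $S,u,v,u,S,w,v,T$. The last excursion is the simple path $(S,w,v,T)$. The backward erasure, however, goes from $T$ to $v$, then to $u$ (the vertex preceding the first visit of $v$), then to $S$, giving $\mathrm{LE}(X)=(S,u,v,T)$. So $\mathrm{LE}(X)$ is not a function of the last excursion. A last-exit decomposition with the strong Markov property identifies the law of the last excursion and its independence from the earlier ones, but that alone cannot show that $\mathrm{LE}(X)$ and the loop erasure of the last excursion have the same law.

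That equality in law is true, but it is exactly the paper's Lemma~\ref{lem:Zoe}, and it needs a genuinely new ingredient. The paper builds, by induction on the number of vertices, a bijection $\psi$ on trajectories that permutes the oriented steps of a path (hence preserves its probability) and turns its loop erasure into that of the last excursion. Alternatively, you could invoke Lawler's theorem that backward and forward chronological loop erasure have the same law, applied both to $X$ and to the walk in the branch, since for forward erasure your last-exit claim does hold pathwise. With either repair, your parallel case coincides with the paper's argument.

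In the series case you can avoid the issue altogether, as the paper does. Cut at the first hitting time $\tau$ of $M$, and delete from the path after $\tau$ all excursions into $G_a$. This does not change the first-entrance edges of the vertices of $G_b\setminus\{M\}$. So the $G_b$-part of $\mathrm{LE}(X)$ is, pathwise, the loop erasure of this pruned path, which is an \emph{unconditioned} walk in $G_b$ from $M$ to $T$, independent of the walk before $\tau$. No conditioned factor, and hence no last-excursion identity, is needed there.
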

		
		We prove Lemma~\ref{claim:LEforward_egal_LEbackward} by induction on the number of edges in $G$. However, for the induction to work, we need that, if $G$ is two series-parallel graphs $G_1$ and $G_2$ merged in parallel or in series, then the ants process ``restricted'' to $G_1$ (resp.\ $G_2$) is also an ants process.
		In the case when $G_1$ and $G_2$ are merged in parallel, this may not be true, because a random walk in $G$ conditioned on reaching $T$ through $G_1$ typically makes more excursions away from $S$ into $G_1$ than a walk restricted to~$G_1$.
		This issue is resolved in~\cite{kious2020finding} by introducing a so-called ``generalisation'' of the model. We provide a different argument, based on the following lemma:
		
		\begin{lemma}\label{lem:Zoe}
			Let $G = (V, E)$ be a graph with two distinguished nodes $\mathcal N$ and $\mathcal F$,
			and $w = (w_e)_{e\in E}$ a sequence of non-negative weights.
			Let $X$ be a weighted random walk on a graph $G$ started at $\mathcal N$, stopped when first hitting $\mathcal F$, and with transitions probabilities proportional to the edge-weights~$w$.
			We write $X$ as a concatenation of excursions away from $\mathcal N$: 
			$X = U S$, where $U$ is the path of the random walk $X$ until it last visits $\mathcal N$, and $S$ is the path after this last visit to $\mathcal N$.
			Then, in distribution,
			\[\mathrm{LE}(X) = \mathrm{LE}(S).\]
		\end{lemma}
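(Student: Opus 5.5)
The plan is to pass through the \emph{forward} (chronological) loop-erasure, which behaves much better under decomposition at returns to $\mathcal N$. Write $\mathrm{LE}^{\rightarrow}(\gamma)$ for the chronological loop-erasure: loops are erased in the order in which they are closed while traversing $\gamma$ forwards. The backward erasure $\mathrm{LE}$ of item~\ref{item:LE} satisfies, pathwise, $\mathrm{LE}(\gamma) = \overline{\mathrm{LE}^{\rightarrow}(\bar\gamma)}$. Indeed, the backward procedure selects vertices through their \emph{first} visit times in $\gamma$, and these are exactly the \emph{last} visit times in $\bar\gamma$, which is what chronological erasure of $\bar\gamma$ keeps. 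So the proof splits into three steps:
\[
\mathrm{LE}(X) \overset{(d)}{=} \mathrm{LE}^{\rightarrow}(X) = \mathrm{LE}^{\rightarrow}(S) \overset{(d)}{=} \mathrm{LE}(S).
\]

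\textbf{Step 1 (the pathwise middle equality).} Since $X=US$ and $U$ ends at $\mathcal N$, each time the walk returns to $\mathcal N$ while traversing $U$, chronological erasure closes a loop at the starting point $\mathcal N$. That loop is erased entirely, so the running loop-erased path is reset to the trivial path at $\mathcal N$. After the last such return (the end of $U$), the erasure proceeds exactly as it would on $S$ alone. Hence $\mathrm{LE}^{\rightarrow}(X) = \mathrm{LE}^{\rightarrow}(S)$ for every realisation. This step is elementary.

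\textbf{Step 2 (reversal for $X$).} I need $\mathrm{LE}(X)\overset{(d)}{=}\mathrm{LE}^{\rightarrow}(X)$, equivalently $\mathrm{LE}^{\rightarrow}(\bar X)\overset{(d)}{=}\overline{\mathrm{LE}^{\rightarrow}(X)}$. This is the classical reversibility of loop-erased random walk (Lawler), and I would prove it directly. For a simple path $\eta=(v_0=\mathcal N,\dots,v_k=\mathcal F)$, decompose $X$ at the successive last visits to $v_0,v_1,\dots$. This gives
\[
\mathbb P(\mathrm{LE}^{\rightarrow}(X)=\eta)=\prod_{j<k} p(v_j,v_{j+1})\,\prod_{j<k} G_{A_j}(v_j,v_j),
\qquad A_j = V\setminus\{\mathcal F, v_0,\dots,v_{j-1}\},
\]
where $p$ is the transition kernel and $G_A$ the Green function of the walk killed on leaving $A$. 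By reversibility, $w_{\{u,v\}}$ is symmetric, so the product $\prod_j p(v_j,v_{j+1})$ equals the product of the reversed transitions up to the ratio $\pi(v_0)/\pi(v_k)$. The product of Green functions is the determinant-type quantity $\det(G_{V\setminus\{\mathcal F\}})$ restricted to $\{v_0,\dots,v_{k-1}\}$, and is therefore independent of the order in which the vertices are listed. The mismatch between starting at $\mathcal N$ and stopping at $\mathcal F$ (versus the reverse) has to be tracked carefully here. I would handle it by comparing path weights of $X$ and of $\bar X$ directly: each excursion's weight is a product of symmetric edge weights divided by vertex degrees, and the degree factors telescope.

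\textbf{Step 3 (reversal for $S$).} I need $\mathrm{LE}^{\rightarrow}(S)\overset{(d)}{=}\mathrm{LE}(S)$, that is, the same reversal identity for $S$. Conditionally on $U$, the path $S$ is a walk from $\mathcal N$ whose first step is chosen with weight proportional to $w_{\{\mathcal N,v\}}h(v)$. Afterwards it is the Doob $h$-transform of the walk killed at $\mathcal N$, with $h(v)=\mathbb P_v(\text{hit }\mathcal F\text{ before }\mathcal N)$. This $h$-transform is again a reversible walk, with conductances $w_{\{u,v\}}h(u)h(v)$ (and $h(\mathcal N)$ replaced by $1$ on edges at $\mathcal N$). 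So Step 2 applies verbatim to $S$ on $G$ with the modified weights, and combining the three steps gives the lemma.

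The main obstacle is Step 2/3: establishing the reversal identity with correct bookkeeping of the endpoint factors. For $S$ there is the extra complication that the conductances at $\mathcal N$ are not exactly of $h$-transform form. If the Green-function route becomes messy, my fallback is to compute $\mathbb P(\mathrm{LE}(X)=\eta)$ and $\mathbb P(\mathrm{LE}(S)=\eta)$ both via the last-exit decomposition. I would then check that they differ only through the factor $G(\mathcal N,\mathcal N)$ coming from $U$, which cancels against the normalisation of $h$.
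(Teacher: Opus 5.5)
Your argument is correct, but it takes a genuinely different route from the paper.

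\textbf{The paper's route.} The paper works with the backward erasure throughout. By induction on $|V|$ it constructs an explicit bijection $\psi$ on walk paths that permutes their oriented edges, so it preserves path probabilities for arbitrary transition weights, and it satisfies $\mathrm{LE}(\psi(us))=\mathrm{LE}(s)$. The argument is purely combinatorial and self-contained, and it never needs to identify the law of $S$.

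\textbf{Your route.} You pass to chronological erasure, where the statement becomes the trivial pathwise identity $\mathrm{LE}^{\rightarrow}(X)=\mathrm{LE}^{\rightarrow}(S)$. All the content then goes into Lawler's theorem that forward and backward erasures of a Markov path have the same law, proved via the Green-function factorisation and order-independence of $\prod_j G_{A_j}(v_j,v_j)$. Given that classical identity, this is shorter, and it explains conceptually why the lemma holds.

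\textbf{Your two flagged obstacles are not real.} Decomposing $X$ at its first visits to $v_k,v_{k-1},\dots,v_1$ gives directly
\[
\mathbb P(\mathrm{LE}(X)=\eta)=\prod_{j<k}p(v_j,v_{j+1})\prod_{j<k}G_{V\setminus\{v_{j+1},\dots,v_k\}}(v_j,v_j).
\]
This is the same edge product as for $\mathrm{LE}^{\rightarrow}(X)$, together with the Green product for the reversed ordering. By Cramer's rule, $G_A(x,x)=\det(I-P_{A\setminus\{x\}})/\det(I-P_A)$ for any substochastic kernel $P$. So order-independence needs no symmetry of the weights, and there is no endpoint mismatch to track.

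For the same reason, Step~3 applies to $S$ as it stands: $S$ is the $h$-transform, started at $\mathcal N$ and never returning to it. You do not need to realise it as a conductance walk on $G$, and it is not one, since weights $w_{\{\mathcal N,v\}}h(v)$ on the edges at $\mathcal N$ would allow returns to $\mathcal N$.

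In fact your fallback alone already proves the lemma. Use order-independence to move $\mathcal N$ to the front of the ordering. The two probabilities then differ exactly by the factor $\mathbb P_{\mathcal N}(\tau_{\mathcal F}<\tau^+_{\mathcal N})\,G_{V\setminus\{\mathcal F\}}(\mathcal N,\mathcal N)=1$.
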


		\begin{figure}
			\begin{center}
				\includegraphics[width=10cm]{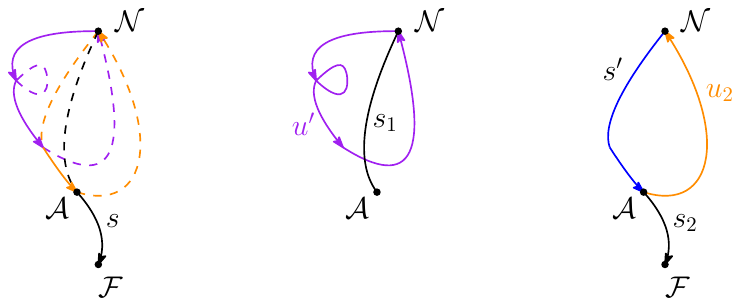}
			\end{center}
			\caption{Visual support for the proof of Lemma~\ref{lem:Zoe}. 
				On the left-hand side: the path $\gamma$ is decomposed into two parts $u$ (until the last visit at $\mathcal N$) and $s$ (after the last visit at $\mathcal N$). 
				For better readability, we have coloured the first part of $u$ in purple, the second part in orange, and $s$ in black. The non-dashed part of $\gamma$ is $\mathrm{LE}(\gamma)$.
				In the middle: the path $u' s_1$, to which we apply $\psi$ recursively in the proof of Lemma~\ref{lem:Zoe}.
				On the right-hand side: the path $u_2 s's_2$, which goes from $\mathcal A$ to $\mathcal F$.
				Recall that we define $\psi(\gamma) = \psi(u's_1) u_2s's_2$.
			}
			\label{fig:def_psi}
		\end{figure}
		
		\begin{proof}
			The idea of the proof is to define a bijection $\psi$ on the set of paths started at $\mathcal N$ and stopped at a vertex $\mathcal A$ of $G$ that is not hit before $\mathcal F$ in $\mathcal G$, such that:
			\begin{itemize} 
				\item[(i)] $\psi(\gamma)$ is a permutation of $\gamma$ in the sense that both paths are ordered lists of the same oriented edges, in different orders (implying that $\mathbb P(X = \gamma) = \mathbb P(X = \psi(\gamma))$); 
				\item[(ii)] if $\gamma = us$, where $u$ is the path $\gamma$ until it last visits $\mathcal N$, and $s$ the path $\gamma$ after it last visits $\mathcal N$,
				then $\mathrm{LE}(\psi(\gamma)) = \mathrm{LE}(s)$.
			\end{itemize}
			We prove the existence of $\psi$ by induction on $|V|$, the number of vertices in $G$.
			If $|V| = 2$, then the only two nodes in $G$ are $\mathcal N$ and $\mathcal F$; thus, any path $\gamma$ goes from $\mathcal N$ to $\mathcal F$ in one step. We set $\psi(\gamma) = \gamma$ for all $\gamma$.
			
			We now assume that $|V|\geq 3$ and that $\psi$ has been defined on any graph with at most $|V|-1$ vertices.
			We let $\gamma = u s$ be a path from $\mathcal N$ to $\mathcal F$ decomposed as in (ii) (also see Figure~\ref{fig:def_psi}).
			We define $\psi(\gamma)$ as follows: 
			we let $\mathcal A$ be the vertex at which $s$ last intersects with $u$.
			If $\mathcal A	= \mathcal N$, then we let $\psi(\gamma) = \gamma$ 
			and note that $\mathrm{LE}(\gamma) = \mathrm{LE}(s)$.
			Thus, (ii) is satisfied in this case.
			Otherwise, we let $u_1$ be the path $u$ until it first hits $\mathcal A$, and $u_2$ be the rest of~$u$. 
			We also let $s_1$ be the path $s$ until it first hits $\mathcal A$, and $s_2$ the path $s$ after it first hits $\mathcal A$.
			We thus have $\gamma = u_1u_2s_1s_2$.
			We also decompose $u_1$ as a $u_1 = u' s'$, where $u'$ is the path $u_1$ until it last visits $\mathcal N$, and $s'$ the rest of $u_1$. 
			Note that, by definition, both $u'$ and $s_1$ never hit $\mathcal F$; 
			thus, they are paths in a graph $G'$ whose vertex set is $V\setminus\{\mathcal F\}$, on which $\psi$ is well-defined, by the induction hypothesis.
			Thus, we let $\psi(\gamma) = \psi(u' s_1) u_2 s' s_2$. 
			
			To conclude the induction step, it only remains to prove that $\psi$ is bijective and satisfies (i-ii).
			
			$\bullet$ We first prove that $\psi$ is a bijection. 
			This is because, from $\eta = \psi(\gamma)$, one can recover $\psi(u's_1)$, $u_2$, $s'$ and $s_2$, and thus $\gamma$ as follows: 
			we first write $\eta = ab$, where $a$ is the path $\eta$ until the last visit at $\mathcal N$, and $b$ is the rest. 
			We then let $\mathcal A$ be the vertex where $b$ last intersects $a$. 
			If $\mathcal A = \mathcal N$, then $\gamma = \eta$.
			If $\mathcal A\neq\mathcal N$, 
			then we let $\eta_1$ be the path $\eta$ until the first visit at $\mathcal A$, and $\eta_2$ the rest of the path $\eta$. 
			By definition, $\eta_1$ is a path {that hits its final vertex only once} on a graph of vertex set $V\setminus\{\mathcal F\}$ on which, by the induction hypothesis, $\psi$ is bijective, we thus set $\gamma' = \psi^{-1}(\eta_1)$.
			We let $u'$ be the path $\gamma'$ until it last hits~$\mathcal N$, and $s_1$ be the path $\gamma'$ after it last hits $\mathcal N$.
			Finally, note that $\eta_2$ is a path from $\mathcal A$ to $\mathcal F$.
			We let $u_2$ be the path $\eta_2$ until its last visit at~$\mathcal N$, $s'$ the path $\eta_2$ from its last visit at $\mathcal N$ until its first visit at $\mathcal A$ {after that}, 
			and $s_2$ the rest of the path $\eta_2$ (thus from $\mathcal A$ to $\mathcal F$).
			Then, we get that, if $\gamma = u' s' u_2 s_1s_2$, then $\psi(\gamma) = \eta$. 
			We have thus been able to define $\psi^{-1}$, proving that $\psi$ is indeed bijective.
			
			$\bullet$ We now prove that $\psi$ satisfies (i). 
			Recall that $\psi(\gamma) = \psi(u' s_1) u_2 s' s_2$ and, 
			by the induction hypothesis, $\psi(u' s_1)$ is a permutation of $u' s_1$.
			Thus, $\psi(\gamma)$ is a permutation of $u' s_1 u_2 s' s_2$ and thus a permutation of $\gamma = u' s' u_2 s_1 s_2$.
			
			$\bullet$ Finally, we prove that $\psi$ satisfies (ii).
			First note that $\mathrm{LE}(\psi(\gamma)) = \mathrm{LE}(\psi(u's_1)) \mathrm{LE}(s_2)$. By the induction hypothesis, $\mathrm{LE}(\psi(u's_1)) = \mathrm{LE}(s_1)$, which thus implies $\mathrm{LE}(\psi(\gamma))  = \mathrm{LE}(s_1) \mathrm{LE}(s_2) = \mathrm{LE}(s_1s_2) = \mathrm{LE}(s)$.
			
			This concludes the proof.
		\end{proof}
		
		\begin{proof}[Proof of Lemma~\ref{claim:LEforward_egal_LEbackward}]
			We reason by induction on $|E|$, the number of edges of $G$.
			It $|E| = 1$, there is a unique simple path, say $\gamma_0$ between $\mathcal S$ and $\mathcal T$, and thus, 
			$\mathbb P(\mathrm{LE}(X) = \gamma_0) =
			\mathbb P(\mathrm{LE}(Y) = \bar\gamma_0)=1$, which concludes the proof of the base case.
			We now assume that $G$ has $N+1$ edges and that the induction hypothesis (i.e.\ the statement of the lemma) holds for any series-parallel graph with at most $N$ edges.
			By definition, $G$ is the merging of two non-empty series-parallel graphs in series or in parallel. We treat the two cases separately.
			
			{\bf Series case: } We first assume that $G$ can be decomposed as two series-parallel graphs $G_1 = (V_1, E_1)$ and $G_2 = (V_2, E_2)$ merged in series. 
			We call $\mathcal S_1$,  $\mathcal S_2$, $\mathcal T_1$, and $\mathcal T_2$ the respective sources and sinks of $G_1$ and $G_2$. 
			We assume that $\mathcal S = \mathcal S_1$, $\mathcal T_1 = \mathcal S_2$, and $\mathcal T = \mathcal T_2$.
			We now look at the random walk $X$: let $\tau$ be the first time $X$ hits $S_1 = T_2$ and let $X_{\leq \tau}$ be the path followed by the random walk until that time.
			Note that $X_{\leq\tau}$ is a random walk on $G_1$, started at $\mathcal S_1$, stopped when first hitting $\mathcal T_1$, and with transition probabilities proportional to the edge weights. Thus, the induction hypothesis applies to $X_{\leq \tau}$.
			Similarly, we let $X_{>\tau}$ be the path followed by $X$ after time $\tau$: 
			this path goes from $\mathcal T_1 = \mathcal S_2$ to $\mathcal T_2$ and possibly visits edges in $E_1$. 
			We let $X^{\sss (2)}_{>\tau}$ be the path obtained when removing from $X_{>\tau}$ all the edges that belong to $E_1$; by definition, $X^{\sss (2)}_{>\tau}$ is a path in $G_2$.
			By definition, $X^{\sss (2)}_{>\tau}$ is equal, in distribution, to a random walk started at $\mathcal S_2$, stopped when first hitting $\mathcal T_2$, and whose transition probabilities are proportional to the edge weights.
			Thus, the induction hypothesis also applies to $X^{\sss (2)}_{> \tau}$.
			Finally, note that
			\[\mathrm{LE}(X) = \mathrm{LE}(X_{\leq \tau})\mathrm{LE}(X^{\sss (2)}_{>\tau}).\]
			
			We now let $\gamma$ be a simple path from $\mathcal S$ to $\mathcal T$.
			There is a unique way to decompose $\gamma$ as $\gamma_1\gamma_2$ (where this denotes the concatenation of paths $\gamma_1$ and $\gamma_2$), 
			where $\gamma_1$ and $\gamma_2$ are simple paths respectively in $G_1$ and $G_2$.
			By the induction hypothesis,
			\[\mathbb P(\mathrm{LE}(X_{\leq \tau}) = \gamma_1) = \mathbb P(Y^{\sss (1)} = \bar\gamma_1)
			\quad\text{ and }\quad
			\mathbb P(\mathrm{LE}(X^{\sss (2)}) = \gamma_2) = \mathbb P(Y^{\sss (2)} = \bar\gamma_2),
			\]
			where $Y^{\sss (1)}$ (resp.\ $Y^{\sss (2)}$) is a random walk on $G_1$ (resp.\ $G_2$), started at $\mathcal T_1$ (resp.\ $\mathcal T_2$), stopped when first hitting $\mathcal S_1$ (resp.\ $\mathcal S_2$), and whose transition probabilities are proportional to the edge-weights.
			Thus,
			\begin{align*}
				\mathbb P(\mathrm{LE}(X) = \gamma) 
				&= \mathbb P(\mathrm{LE}(X_{\leq\tau}) = \gamma_1) \mathbb P(\mathrm{LE}(X^{\sss (2)}) = \gamma_2)\\
				&= \mathbb P(\mathrm{LE}(Y^{\sss (1)}) = \bar\gamma_1) \mathbb P(\mathrm{LE}(Y^{\sss (2)}) = \bar\gamma_2) = \mathbb P(\mathrm{LE}(Y) = \bar\gamma),
			\end{align*}
			where $Y$ is a random walk on $G$, starting at $\mathcal T$, stopped when first hitting $\mathcal S$, and with edge-weights $(w_e)_{e\in E}$.
			The last equality can be justified as we did the first one.
			This concludes the proof in the series case.

			{\bf Parallel case: } We now assume that $G$ can be decomposed as two series-parallel graphs $G_1 = (V_1, E_1)$ and $G_2 = (E_2, V_2)$ merged in parallel. We assume $\mathcal S = \mathcal S_1 = \mathcal S_2$ and $\mathcal T = \mathcal T_1 = \mathcal T_2$.
			Note that, if $\gamma$ is a simple path in $G$, it is either a simple path in $G_1$ or in $G_2$. Without loss of generality, we assume that $\gamma$ is a simple path in $G_1$.
			We let $X^{\sss (1)}$ be a random walk in $G_1$, started at $\mathcal S = \mathcal S_1$, stopped when first hitting $\mathcal T = \mathcal T_1$, and with edge-weights $(w_e)_{e\in E_1}$.
			By the induction hypothesis,
			\[\mathbb P(\mathrm{LE}(X^{\sss (1)}) = \gamma) = \mathbb P(\mathrm{LE}(Y^{\sss (1)}) = \bar\gamma),\]
			where $Y^{\sss (1)}$ be a random walk in $G_1$, started at $\mathcal T = \mathcal T_1$, stopped when first hitting $\mathcal S = \mathcal S_1$, and with edge-weights $(w_e)_{e\in E_1}$.
			By Lemma~\ref{lem:Zoe}, in distribution,
			$\mathrm{LE}(X) = \mathrm{LE}(S)$, 
			where $S$ is the path $X$ after its last visit to $\mathcal S$.
			Note that, by definition, $\mathcal S$ is in $G_1$ with probability $\mathcal C_{G_1}/(\mathcal C_{G_1}+\mathcal C_{G_2})$; furthermore, given that it is in $G_1$, 
			it is a random walk in $G_1$ with transition probabilities proportional to the edge-weights, 
			started at $\mathcal S =\mathcal S_1$, stopped when first hitting $\mathcal T = \mathcal T_1$, 
			conditioned on not returning to $\mathcal S_1$ before hitting $\mathcal T_1$. 
			
			Thus, for any simple path $\gamma$ from $\mathcal S$ to $\mathcal T$ in $G_1$
			\begin{equation}\label{eq:reason}
				\P(\mathrm{LE}(X) = \gamma)
				=\P(\mathrm{LE}(S) = \gamma)
				= \P(\mathrm{LE}(S^{\sss (1)}) = \gamma ) \frac{\mathcal{C}_{G_1}}{\mathcal{C}_{G_1} + \mathcal{C}_{G_2}}
				=\P(\mathrm{LE}(X^{\sss (1)}) = \gamma ) \frac{\mathcal{C}_{G_1}}{\mathcal{C}_{G_1} + \mathcal{C}_{G_2}},
			\end{equation}
			where we have let, as above,
			$X^{\sss (1)}$ be a random walk in $G_1$, started at $\mathcal S_1$, stopped when first hitting $\mathcal T_1$, and with transition probabilities proportional to the edge-weights, and $S^{\sss (1)}$ be the path of $X^{\sss (1)}$ after its last visit at $\mathcal S$.
			Now letting $Y^{\sss (1)}$ be a random walk in $G_1$, started at $\mathcal T_1$, stopped when first hitting $\mathcal S_1$, and with transition probabilities proportional to the edge-weights, we get, by the induction hypothesis,
			\[\P(\mathrm{LE}(X) = \gamma)
			= \P(\mathrm{LE}(Y^{\sss (1)}) = \bar{\gamma}) \frac{\mathcal{C}_{G_1}}{\mathcal{C}_{G_1} + \mathcal{C}_{G_2}}
			=\P(\mathrm{LE}(Y) = \bar\gamma).\]
			(For the last equality, we reason as in~\eqref{eq:reason}.)
		\end{proof}
		
		\begin{proof}[Proof of Lemma \ref{lemma:single_nest_in_multi_nest}] 
			For $i=1, 2, 3$, we let $(\bar{\bf W}^{\sss (i)}(n))_{n\geq 0}$ be the single-nest ants process on~$G_i$.
			We fix $i\in\{1, 2, 3\}$ and prove by induction on $n$ that, in distribution,
			\[\bar{\bf W}^{\sss (i)}(n) = {\bf W}^{\sss (i)}(\tau^{\sss (i)}_n).\]
			Because $i\in \{1, 2, 3\}$ is fixed, we now abuse notation by ignoring all super-indices $(i)$.
			We reason by induction on $n$: the case $n=0$ is trivial since, at time zero in both models, the edge weights are all equal to~1.
			For the induction step, it is enough to show that, if $\bar{\bf W}(n) = {\bf W}(\tau_n) = (w_e)_{e\in E_i}$, then, in distribution, 
			\[\gamma_{\tau_{n+1}}|_{G_i} = \eta_{n+1},\]
			where $\gamma_{\tau_{n+1}}|_{G_i}$ is the path $\gamma_{\tau_{n+1}}$ (the path reinforced by the multi-nest ant process at time $\tau_{n+1}$) in which we have removed all edges that do not belong to $G_i$, and where $\eta_{n+1}$ is the path reinforced by the single-nest ant process on $G_i$ at time $n+1$ (i.e.\ the set of edges $e$ such that $\bar{W}_e(n+1)\neq \bar{W}_e(n)$).
			
			Note that, by definition of $\tau_{n+1}$, $\gamma_{\tau_{n+1}}$ has the same distribution as $\gamma_{\tau_n+1}$ conditioned to intersect~$G_i$.
			Indeed, this can be justified by Lemma~\ref{lem:Zoe}: Assume for example that $i=1$ (the same argument holds for $i\in \{2, 3\}$). 
			Between time $\tau_n$ and time $\tau_{n+1}-1$, the edge-weights in $G_1$ stay constant, and some edge-weights in $G_2\cup G_3$ change. 
			By Lemma~\ref{lem:Zoe}, $\gamma_{\tau_{n+1}}$ is equal in distribution to $\mathrm{LE}(S)$, where $S$ is a random walk in $G_1$, started at $\nest_1$, stopped when first hitting $\food$, conditioned to hit $\food$ before returning to $\nest_1$, and whose transition probability are given by the edge-weights in $G_1$ at time $\tau_{n+1}$.
			Also by Lemma~\ref{lem:Zoe}, $\gamma_{\tau_n +1}$ conditioned to intersect $G_1$ is equal in distribution to $\mathrm{LE}(S')$, where $S'$ is a random walk in $G_1$, started at $\nest_1$, stopped when first hitting $\food$, conditioned to hit $\food$ before returning $\nest_1$, and whose transition probability are given by the edge-weights in $G_1$ at time $\tau_{n}+1$.
			Because the edge-weights in $G_1$ are the same at time $\tau_n+1$ and at time $\tau_{n+1}$,
			we do get that, in distribution, $S = S'$ and thus that $\gamma_{\tau_{n+1}}$ is equal in distribution to $\gamma_{\tau_n+1}$ conditioned to intersect $G_1$.

			By symmetry, we can assume without loss of generality that the $(\tau_n+1)$-th ant starts at $\nest_1$.
			We let $\Gamma_{\tau_n+1}$ be the trajectory of the $(\tau_n+1)$-th ant, so that ${\gamma_{\tau_n+1}} = \mathrm{LE}(\Gamma_{\tau_n+1})$; we also let $S_{\tau_n+1}$ be the part of the path $\Gamma_{\tau_n+1}$ after its last visit at $\nest_1$.
			By Lemma~\ref{lem:Zoe}, in distribution, $\mathrm{LE}(\Gamma_{\tau_n+1}) = \mathrm{LE}(S_{\tau_n+1})$.

			$\bullet$ First assume that $i=1$.
			Given that $S_{\tau_n+1}$ is in $G_1$, 
			then it is distributed as a random walk on $G_1$ 
			with transition probabilities proportional to the edge-weights at time $\tau_n$, 
			started at $\nest_1$, stopped when first hitting $\food$, 
			and conditioned not to return to $\nest_1$ before hitting $\food$.
			In other words, if we let $\Gamma_n^{\sss (1)}$ be a random walk on $G_1$ 
			with transition probabilities proportional to the edge-weights at time $\tau_n$, 
			started at $\nest_1$, stopped when first hitting $\food$, and let $S_n^{\sss (1)}$ be the part of $\Gamma_n^{\sss (1)}$ after its last visit to $\nest_1$, then, the distribution of $S_{\tau_n+1}$ conditioned on being in $G_1$  is the same as the distribution of $S_n^{\sss (1)}$.
			Thus, for any simple path $\gamma$ from $\nest_1$ to $\food$ in $G_1$, 
			\begin{align*}
				\mathbb P(\mathrm{LE}(\Gamma_{\tau_n+1})=\gamma | \mathrm{LE}(\Gamma_{\tau_n+1})\in G_1)
				&= \mathbb P(\mathrm{LE}(S_{\tau_n+1})=\gamma | \mathrm{LE}(S_{\tau_n+1})\in G_1)
				= \mathbb P(\mathrm{LE}(S_n^{\sss (1)}) = \gamma)\\
				&= \mathbb P(\mathrm{LE}(\Gamma_n^{\sss (1)}) = \gamma)
				= \mathbb P(\eta_{n+1} = \gamma),
			\end{align*}
			as desired.
			
			$\bullet$ We now assume that $i\in \{2, 3\}$.
			As in the case $i=1$, we have that the distribution of $S_{\tau_n+1}$ conditioned on being in $G_2\cup G_3$ is the same as $S^{\sss (23)}_n$, 
			where $S_n^{\sss (23)}$ is defined as the trajectory after its last visit at $\nest_1$ of a random walk $\Gamma_n^{\sss (23)}$ on $G_2\cup G_3$, 
			whose transition probabilities are proportional to the edges-weights at time $\tau_n$, started at $\nest_1$, and stopped when first hitting $\food$. 
			Reasoning as in the case $i=1$, 
			we get that, for any simple path $\gamma_3$ from $\nest_1$ to $\nest_2$ in $G_3$, 
			and for any simple path from $\nest_2$ to $\food$ in $G_2$,
			\[\mathbb P(\mathrm{LE}(\Gamma_{\tau_n+1}) = \gamma_3\gamma_2 | \mathrm{LE}(\Gamma_{\tau_n+1})\in G_2\cup G_3)
			= \mathbb P(\mathrm{LE}(\Gamma_n^{\sss (23)}) = \gamma_3\gamma_2).
			\]
			We now let $\Gamma_n^{\sss (3)}$ be the trajectory of $\Gamma_n^{\sss (23)}$ until it first hits $\nest_2$, and $\Gamma_n^{\sss (2)}$ be the rest of the trajectory, in which we have removed all excursions into $G_3$. With this notation, $\mathrm{LE}(\Gamma_n^{\sss (23)}) = \mathrm{LE}(\Gamma_n^{\sss (3)})\mathrm{LE}(\Gamma_n^{\sss (2)})$. Also, $\Gamma_n^{\sss (2)}$ and $\Gamma_n^{\sss (3)}$ are independent, which gives
			\[\mathbb P(\mathrm{LE}(\Gamma_{\tau_n+1}) = \gamma_1\gamma_2 | \mathrm{LE}(\Gamma_{\tau_n+1})\in G_2\cup G_3)
			= \mathbb P(\mathrm{LE}(\Gamma_n^{\sss (3)}) = \gamma_3)
			\mathbb P(\mathrm{LE}(\Gamma_n^{\sss (2)}) = \gamma_2).
			\]
			By definition, $\Gamma_n^{\sss (2)}$ is a random walk on $G_2$, started at $\nest_2$, stopped when first hitting $\food$ and with transition probabilities proportional to the edge-weights at time $\tau_n$. 
			Thus, $\mathbb P(\mathrm{LE}(\Gamma_n^{\sss (2)}) = \gamma_2) 
			= \mathbb P(\eta^{\sss (2)}_{n+1} = \gamma_2)$.
			If $\nest_1$ is the source of $G_3$ and $\nest_2$ its sink, then we also get that, straightforwardly,
			$\mathbb P(\mathrm{LE}(\Gamma_n^{\sss (3)}) = \gamma_3) 
			= \mathbb P(\eta^{\sss (3)}_{n+1} = \gamma_3)$.
			If $\nest_1$ is the sink of $G_3$ and $\nest_2$ its source, then the same conclusion follows by Lemma~\ref{claim:LEforward_egal_LEbackward}.
			In total, we thus get that
			\[\mathbb P(\mathrm{LE}(\Gamma_{\tau_n+1}) = \gamma_3\gamma_2 | \mathrm{LE}(\Gamma_{\tau_n+1})\in G_2\cup G_3)
			= \mathbb P(\eta^{\sss (2)}_{n+1} = \gamma_2)
			\mathbb P(\eta^{\sss (3)}_{n+1} = \gamma_3).\]
			Now note that, by definition, 
			\[\{\mathrm{LE}(\Gamma_{\tau_n+1})\in G_2\cup G_3\}
			=\{\mathrm{LE}(\Gamma_{\tau_n+1})\text{ intersects } G_2\}
			= \{\mathrm{LE}(\Gamma_{\tau_n+1})\text{ intersects } G_3\}.\]
			Therefore,
			\[\mathbb P(\mathrm{LE}(\Gamma_{\tau_n+1}) = \gamma_3\gamma_2 | \mathrm{LE}(\Gamma_{\tau_n+1})\text{ intersects } G_2)
			=  \mathbb P(\eta^{\sss (2)}_{n+1} = \gamma_2)
			\mathbb P(\eta^{\sss (3)}_{n+1} = \gamma_3).\]
			Summing over all simple paths $\gamma_3$ from $\nest_1$ to $\nest_2$ in $G_3$, 
			we get
			\[\mathbb P(\mathrm{LE}(\Gamma_{\tau_n+1}|_{G_2}) = \gamma_2 | \mathrm{LE}(\Gamma_{\tau_n+1})\text{ intersects } G_2)
			=  \mathbb P(\eta^{\sss (2)}_{n+1} = \gamma_2),\]
			and similarly for $i=3$, as desired.
		\end{proof}

		\subsection{Proof of Lemma \ref{lemma:conv_rn}}\label{subsect:proof_lemmeaux_isastochapprox}
		Recall that $N_i(n)$ denotes the number of ants that have reinforced a path in $G_i$ up to time $n$ in the multi-nest ant process.
		\begin{lemma}\label{lemma:Ni_poly}
			For any $ i \in \{1,2,3\}$, there exists $\eps_i>0$ such that, almost surely for all $n$ large enough, 
			\begin{equation}
				N_i(n)\geq n^{\eps_i}. \label{eqn:bound_on_N_i}
			\end{equation}
		\end{lemma}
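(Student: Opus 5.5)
We compare each $N_i$ with a $G$-urn via Corollary~\ref{cor:urnstochdomin}, or rather with an explicit lower bound on its increment probability that yields polynomial growth. The basic input is the conductance formulas \eqref{eqn:defP1}--\eqref{eqn:defP3} combined with the bounds of Lemma~\ref{lem:bounds_KMS}. By Lemma~\ref{lemma:single_nest_in_multi_nest}, the restriction of the process to $G_i$, observed at its reinforcement times, is a single-nest process. Hence, after $N_i(n)$ reinforcements, its effective conductance satisfies
\[
\frac{N_i(n)}{h_{\max}(G_i)} \le C_i(n) \le \frac{N_i(n)+C}{h_{\min}(G_i)}.
\]
So $C_i(n)$ is comparable, up to constants, to $N_i(n)+1$. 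Since $N_1(n)+N_2(n)=n$ and $N_3(n)\le n$, every conductance is at most $O(n)$.

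Take $i=1$; the case $i=2$ is symmetric. We have
\[
\mathbb P(N_1(n+1)=N_1(n)+1\mid\mathcal F_n)=P_1({\bf C}(n))\ge \frac{\alpha C_1C_2}{C_1C_2+C_1C_3+C_2C_3}\ge \frac{\alpha C_1}{C_1+C_2+C_3},
\]
using that $C_3C_2\le \dots$; more carefully, $\frac{C_1C_2}{C_1C_2+C_1C_3+C_2C_3}=\frac{C_1}{C_1+C_3+C_1C_3/C_2}$. Here I would rather use the cruder bound that the probability is at least $\alpha\, C_1/(C_1+C_2)$, which is the probability of reaching $\food$ through $G_1$ directly, by monotonicity: adding $G_3$ in series before $G_2$ only lowers the competing conductance. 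This gives
\[
P_1 \ge c\,\frac{N_1(n)+1}{n+1}.
\]
That alone is linear in $\hat N_1$ with slope $c$ possibly less than $1$, so it does not give a positive liminf. It does, however, give polynomial growth: comparing with a Pólya-type urn where the increment probability is at least $c(N+1)/(n+1)$, one gets $\mathbb E[N_1(n)]\gtrsim n^{c}$. Almost sure growth $N_1(n)\ge n^{\eps}$ for any $\eps<c$ follows by a martingale argument. Concretely, $M_n=(N_1(n)+1)/\prod_{k\le n}(1+c/(k+1))$ is a nonnegative submartingale, and the logarithm of $N_1$ grows like $\sum c/(k+1)$ minus a convergent martingale correction. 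The most robust route is a coupling with a genuine Pólya urn: the $\mathcal F_n$-conditional probability dominates that of a standard two-colour urn with weights $c(N+1)$ versus $n$, whose count grows like $n^{c}$ times a positive random limit (a generalised Pólya/Chinese-restaurant growth result).

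For $i=3$ I would use $P_3\ge \alpha\,\frac{C_2C_3}{C_1C_2+C_1C_3+C_2C_3}$. Here $C_1,C_2\le O(n)$, and one of $C_1,C_2$ is at least of order $n/2$ since $N_1+N_2=n$. If $C_2\gtrsim n$, then $P_3\gtrsim C_3/(C_1+C_3+C_1C_3/C_2)\gtrsim C_3/n$. If instead $C_1\gtrsim n$, use the $(1-\alpha)$-term symmetrically. Either way $P_3\ge c'(N_3(n)+1)/(n+1)$, and the same urn argument applies.

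The main obstacle is converting the linear lower bound $P_i\ge c\,(N_i+1)/(n+1)$ with $c<1$ into an almost sure polynomial lower bound; Corollary~\ref{cor:urnstochdomin} does not apply directly since $G(x)=cx$ fails $G(x)>(1+\eps)x$. I would handle this with a stochastic-domination coupling, using uniforms $U_{n+1}$ to drive increments, against the urn $Z_{n+1}=Z_n+\mathbf 1_{U_{n+1}\le c(Z_n+1)/(n+1)}$. Monotonicity of the threshold in $Z$ keeps $N_i\ge Z$. For this urn, $\log Z_n/\log n\to c$ almost surely, which follows from $Z_n/\prod(1+c/k)\to W>0$ almost surely by standard $L^2$ martingale bounds. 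This gives any $\eps_i<c$.
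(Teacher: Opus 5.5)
Your proof is correct in substance and takes a genuinely different route from the paper's.

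\textbf{The paper's route.} The paper couples the whole two-nest process with a modified single-nest process on an auxiliary graph $G'$. In $G'$, $G_1$ is replaced by a shortest path with boosted initial weights, $G_2$ and $G_3$ by longest simple paths, and reinforcement through $G'_3\cup G'_2$ is suppressed with probability $1-\alpha$. The coupling maintains $N_1+C_1\le N'_1$, $N_2\ge N'_2$ and $N_3\ge N'_3$ jointly. Since $N'_2=N'_3$, the pair $(N'_1,N'_2)$ is a two-colour P\'olya urn with random replacement. An external urn limit theorem (\cite[Theorem~4]{MR2562324}) then gives polynomial or linear growth of $N'_2$, and $i=1$ follows by symmetry.

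\textbf{Your route.} You read off a scalar bound directly from the closed forms \eqref{eqn:defP1}--\eqref{eqn:defP3} and the crude comparison $C_i(n)\asymp N_i(n)+1$, namely $\mathbb P(N_i(n+1)=N_i(n)+1\mid\mathcal F_n)\ge c\,(N_i(n)+1)/(n+1)$ for all three $i$ at once. You then dominate each $N_i$ separately by a one-dimensional urn. For $i=3$ you can even skip the case split: $C_1C_2\le (C_1+C_2)^2/4$ gives $P_3\ge \min(\alpha,1-\alpha)\,C_3/\big((C_1+C_2)/4+C_3\big)$. This is more elementary, treats $G_3$ on the same footing as $G_1$ and $G_2$, and avoids keeping three inequalities alive through a joint coupling. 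What the paper's construction buys is a more explicit exponent, which the lemma does not need.

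\textbf{Two steps need tightening.}

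First, the ``$+1$'' in $C_i(n)\gtrsim N_i(n)+1$ does not follow from Lemma~\ref{lem:bounds_KMS}, whose lower bound vanishes when $N_i(n)=0$. It comes from monotonicity of effective conductance in the weights: $C_i(n)\ge C_i(0)>0$. It is essential, because $N_i(0)=0$ and a threshold $cZ/(n+1)$ would leave your comparison urn at $0$ forever.

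Second, $L^2$-boundedness of $M_n=(Z_n+1)/\prod_{k\le n}(1+c/k)$ only gives $M_n\to W$ with $\mathbb E[W]=1$. That yields $\mathbb P(W>0)>0$, not $W>0$ almost surely. There are two easy ways to close this.
\begin{itemize}
\item Redo the same second-moment computation conditionally on $\mathcal G_m=\sigma(U_1,\dots,U_m)$. By conditional Paley--Zygmund this gives $\mathbb P(W>0\mid\mathcal G_m)\ge (Z_m+1)/(Z_m+1+K)\ge 1/(1+K)$ for a deterministic $K$. L\'evy's $0$--$1$ law then forces $W>0$ almost surely.
\item More directly for what the lemma needs, use $(1+x)^{-1/2}\le 1-x/2+3x^2/8$ for $x\ge 0$. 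Together with $Z_n+1\ge 1$, this yields $\mathbb E[(Z_n+1)^{-1/2}]=O(n^{-c/8})$. Markov's inequality and Borel--Cantelli along $n=2^k$, combined with monotonicity of $Z$, then give $Z_n\ge n^{\varepsilon}$ for all large $n$, for any $\varepsilon<c/4$.
\end{itemize}
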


		\begin{proof}[Proof of Lemma \ref{lemma:Ni_poly}]
			The proof is done in two steps: we first couple the multi-nest ant process on $G$ with a single-nest ant process on a simpler graph, and then use this simpler single-nest process to prove the result.
			
			{\bf A single-nest-ant process on a simpler graph:} 
			Recall that $\ell_i$ is the size of a shortest path in $G_i$ (from its source to its sink). 
			For any $i\in\{2,3\}$, we define $\ell'_i$ as the length of a longest simple path in $G_i$ (from its source to its sink). We let $G'$ be the graph $G$ in which the subgraph $G_1$ has been replaced by~$G'_1$, a path of $\ell_1$ edges in series, and the subgraphs $G_2$ and $G_3$ have been replaced by a path of $\ell'_2$, resp.\ $\ell'_3$, edges in series (see illustration on Figure \ref{fig:illu_couplage}).

			\begin{figure}[t]\centering
				\includegraphics[scale=0.7,page=6]{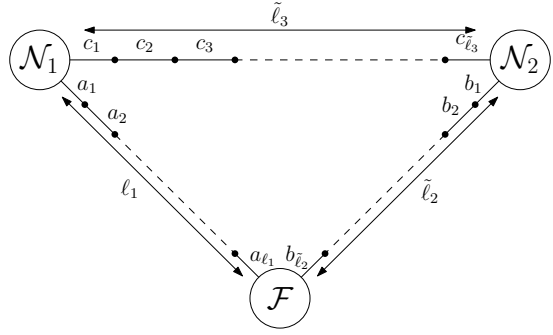}
				\caption{Illustration of $G'$.} \label{fig:illu_couplage}
			\end{figure}
			We let $(\bar{\bf W}^{\sss (i)}(n))_{n\geq 0}$ be the single-nest ant process on $G_i$, and $\bar C_{G_i}(n)$ be the effective conductance of $G_i$ with edge-weights $\bar{\bf W}^{\sss (i)}(n)$.
			By Lemma \ref{lem:bounds_KMS}, almost surely,
			\begin{equation}\label{eq:effcondG1}
				\bar C_{G_1}(n) \leq \frac{n+C_1}{\ell_1},
			\end{equation}
			and
			\begin{equation}\label{eq:effcondG23}
				\bar C_{G_2}(n)\geq \frac{n}{\ell'_2}\quad\text{ and }
				\quad \bar C_{G_3}(n)\geq \frac{n}{\ell'_2}.
			\end{equation}
			
			We now define a variant of the single-nest ant process on $G'$ as follows:
			we let
			\begin{align}
				\begin{cases}
					W'_{a_i}(0)  
					= C_1 & \forall1\leq i \leq \ell_1\\
					W'_{b_i}(0) 
					= 1 & \forall 1\leq i \leq \ell'_2 \\
					W'_{c_i}(0) 
					= 1 & \forall 1\leq i \leq \ell'_3,
				\end{cases}
			\end{align}
			where the edges $(a_i)_{1\leq i\leq \ell_1}$, $(b_i)_{1\leq i\leq \ell'_2}$ and $(c_i)_{1\leq i\leq \ell'_3}$ as in Figure~\ref{fig:illu_couplage}.
			Then, the weights $({\bf W}'(n))_{n\geq 1}$ are defined as in a single-nest ant process on $G'$ with nest $\nest_1$, source of food $\food$, and initial edge-weights ${\bf W}'(0)$, except that, when $\gamma_n= \{c_1, \ldots, c_{\ell'_3}, b_1, \ldots, b_{\ell'_2}\}$, then with probability $1-\alpha$, we instead set $\gamma_{n} = \varnothing$ (i.e.\ no edge-weight is increased at that step). 
			
			Note that, by definition, for all $n\geq 0$, for all $1\leq i\leq \ell'_1$, $1\leq j\leq \ell'_2$, and $1\leq k\leq \ell'_3$ 
			$N_1'(n):=W_{a_1}(n) = W_{a_i}(n)$,
			$N_2'(n):=W_{b_1}(n) =  W_{b_j}(n)$, and 
			$N_3'(n):=W_{c_1}(n) = W_{c_k}(n)$.
			Also note that $N_2'(n) = N'_3(n)$ for all $n\geq 0$.

			{\bf Coupling:}
			We now show that we can couple $({\bf W}'(n))_{n\geq 0}$ and $({\bf W}(n))_{n\geq 0}$ (the multi-nest ants process on $G$) so that, almost surely for all $n\geq 0$,
			\begin{align}
				N_1(n) + C_1 &\leq N'_1(n) \notag\\ 
				N_2(n) &\geq N'_2(n)\label{eqn:coupling}\\
				N_3(n) &\geq N'_3(n).\notag
			\end{align}
			The idea is that $({\bf W}'(n))_{n\geq 0}$ corresponds to the worst case scenario for the conductances of $G_2$ and $G_3$ in the multi-nest ant process: in this worst case scenario, whenever an ant reinforced $G_1$, it does so by choosing the shortest path in $G_1$, while whenever it reinforces $G_2$ or $G_3$, it does so by choosing the longest path in $G_2$ and $G_3$.
			
			For the coupling, we let $(B_n)_{n\geq 1}$ be a sequence of i.i.d.\ random variables such that, for all  $n\geq 1$, $\P(B_n = 1) = \alpha$ and $\P(B_n = 2) = 1-\alpha$.	
			We define the coupling by induction on $n\geq 0$: the base case is trivial by definition.
			For the induction step, we assume that \eqref{eqn:coupling} holds for some $n\geq 0$.
			
			$\bullet$ If $B_n = 1$, then the $(n+1)$-th ant in the multi-nest process on~$G$ starts from~$\nest_1$ (just like the $(n+1)$-th ant in the modified single-nest process on $G'$), and
			the ant process on $G'$ reinforces some edges (recall that in this modified version, there was a possibility to not reinforce any edge). 
			Note that, if we let $(\mathcal F_n)_{n\geq 0}$ be the natural filtration of $({\bf W}(n), {\bf W}'(n))_{n\geq 0}$, then
			\[\mathbb P(N'_1(n+1) = N'_1(n)+1 | \mathcal F_n) 
			= \frac{\frac{N'_1(n)}{\ell_1}}{\frac{N'_1(n)}{\ell_1} + \frac{N'_2(n)N'_3(n)}{\ell'_3 N'_2(n)+\ell'_2 N'_3(n)}}
			\geq \frac{\frac{N_1(n) + C_1}{\ell_1}}{\frac{N_1(n) + C_1}{\ell_1}+\frac{N_2(n)N_3(n)}{\ell'_3 N_2(n)+\ell'_2 N_3(n)}},\]
			by the induction hypothesis.
			Now note that, with the notation introduced in the proof of Proposition~\ref{prop:isastochapprox},
			\[\mathbb P(N_1(n+1) = N_1(n)+1 | \mathcal F_n)
			= \frac{C_{1}(n)}{C_{1}(n)+ \frac{C_{2}(n)C_{3}(n)}{C_{2}(n)+C_{3}(n)}}
			= \frac{\bar C_{G_1}(N_1(n))}{\bar C_{G_1}(N_1(n)) + \frac{\bar C_{G_2}(N_2(n))\bar C_{G_3}(N_3(n))}{\bar C_{G_2}(N_2(n))+\bar C_{G_3}(N_3(n))}},\]
			by Lemma~\ref{lemma:single_nest_in_multi_nest}.
			By~\eqref{eq:effcondG1} and~\eqref{eq:effcondG23}, we thus get
			\[\mathbb P(N_1(n+1) = N_1(n)+1 | \mathcal F_n)
			\leq \frac{\frac{N_1(n) + C_1}{\ell_1}}{\frac{N_1(n) + C_1}{\ell_1}+\frac{N_2(n)N_3(n)}{\ell'_3 N_2(n)+\ell'_2 N_3(n)}} \leq \mathbb P(N'_1(n+1) = N'_1(n)+1 | \mathcal F_n).\]
			We can thus couple the $(n+1)$-th ant in $G$ with the $(n+1)$-th ant in $G'$ so that
			if $N_1(n+1) = N_1(n)+1$ then $N'_1(n+1) =N'_1(n)+1$, and
			if $N'_2(n+1) = N_2(n)+1$ and $N'_3(n+1)= N_3(n)+1$ then $N_2(n+1)= N_2(n)+1$ and $N_3(n+1)= N_3(n)+1$. 
			This concludes the proof of the induction step in this case.		
			
			$\bullet$ If $B_n=2$, then the $(n+1)$-th ant in the multi-nest process on~$G$ starts from $\nest_2$ and, on $G'$, if the ant reaches $F$ through $G'_2$, then we set $N'_i(n+1) = N_i'(n)$ for all $i=1,2,3$, which implies in particular that, almost surely in this case
			\[N'_2(n+1) = N'_2(n)\quad\text{ and }\quad
			N'_3(n+1) = N'_3(n),\]
			which implies that the last two equations in~\eqref{eqn:coupling} hold at time $n+1$.
			Also, in this case,
			\begin{align*}
				\mathbb P(N_1(n+1) = N_1(n)+1|\mathcal F_n)
				&= \frac{\frac{C_{1}(n)C_{3}(n)}{C_{1}(n)C_{3}(n)}}
				{\frac{C_{1}(n)C_{3}(n)}{C_{1}(n)C_{3}(n)} + C_{2}(n)}
				\leq  \frac{C_{1}(n)}
				{C_{1}(n) + \frac{C_{2}(n)C_{3}(n)}{C_{2}(n)+C_{3}(n)}}\\
				&= \frac{\frac{C_1+N_1(n)}{\ell_1}}{\frac{C_1+N_1(n)}{\ell_1} + \frac{N_2(n)N_3(n)}{\ell'_3N_2(n)+\ell'_2N_3(n)}},
			\end{align*}
			by Lemma~\ref{lemma:single_nest_in_multi_nest}, \eqref{eq:effcondG1}, and~\eqref{eq:effcondG23}.
			Thus,
			\[\mathbb P(N'_1(n+1) = N'_1(n)+1|\mathcal F_n)
			= \frac{\frac{N'_1(n)}{\ell_1}}{\frac{N'_1(n)}{\ell_1}+ \frac{N'_2(n)N'_3(n)}{\ell'_3N'_2(n)+\ell'_2N_3(n)}}
			\geq \mathbb P(N_1(n+1) = N_1(n)+1|\mathcal F_n),\]
			by the induction hypothesis.
			Therefore, one can couple the $(n+1)$-th ant in $G'$ with the $(n+1)$-th ant in $G$ so that
			$N_1(n+1) = N_1(n)+1$ implies $N'_1(n+1)= N'_1(n)+1$, which concludes the proof of the induction step in this case.		
			
			{\bf Conclusion of the proof:}
			To conclude the proof, it is enough to show that there exists a constant $\eps>0$ such that, almost surely for all $n$ large enough, $N'_2(n) = N_3'(n) \geq n^{\eps}$.
			The advantage of the single-nest ants process on $G'$ is that $(N'_1(n), N'_2(n))$ 
			can be seen as a P\'olya urn
			with initial composition $(1,1)$, random replacement matrix
			\begin{equation}R =
				\begin{pmatrix}
					1 & 0\\
					0 & B(\alpha)
				\end{pmatrix} 
			\end{equation}
			where $B(\alpha)$ is a Bernoulli distribution of parameter $\alpha$,
			and weights $(1/\ell_1, 1/(\ell'_2+\ell'_3))$.
			Indeed, at every time step, we add one ball of colour~1 with probability proportional to $N'_1(n)/\ell_1$, or we add one ball of colour~2 with probability proportional to $\alpha N'_2(n)/(\ell'_2+\ell'_3)$.
			We refer the reader to~\cite{MR2562324} for a definition of P\'olya urns and apply Theorem~4 therein. We let $\mu_1 = 1/\ell_1$ and $\mu_{23} = \alpha/(\tilde{\ell_2}+\tilde{\ell_3})$. 
			By~\cite[Theorem~4]{MR2562324}, if $\mu_1 > \mu_{23}$, then $N'_1(n) = \mu_1 n + o(n)$ and $N_2'(n) = D n^{\mu_1/\mu_{23}}$, and otherwise, $N'_2(n) = D^* n + o(n)$ (where $D^*$ can be random if $\mu_1 = \mu_{23}$, but we do not care about this here). 
			In every case, this concludes the proof of Lemma \ref{lemma:Ni_poly} for $i\in\{2, 3\}$.
			By symmetry of $G_1$ and $G_2$ in $G$, we also get the same result for $i=1$.	
		\end{proof}
		
		We are now ready to prove Lemma \ref{lemma:conv_rn}.
		\begin{proof}[Proof of Lemma \ref{lemma:conv_rn}]
			Recall from the proof of Proposition~\ref{prop:isastochapprox} that, for all $i=1,2,3$,
			\[r_i(n) = \mathbb P(N_i (n + 1) = N_i (n) + 1|\mathcal F_n) - p_i(\hat{\bf N}(n))
			= P_i({\bf C}(n))- p_i(\hat{\bf N}(n)),\]
			with the functions $P_i$ and $p_i$ are defined in~\eqref{eqn:defP1}, \eqref{eqn:defP3}, and~\eqref{eqn:defp}.
			Equivalently, because $p(\hat{\bf N}(n)) = p({\bf N}(n))$,
			\[r(n)  = P({\bf C}(n)) - P\bigg(\frac{N_1(n)}{\ell_1}, \frac{N_2(n)}{\ell_2}, \frac{N_3(n)}{\ell_3}\bigg).\]
			For all $\delta>0$, for all $x = (x_1, x_2, x_3)\in (0,\infty)^3$ and $y = (y_1, y_2, y_3)\in (0,\infty)^3$ such that, for all $i=1,2,3$,
			\[(1-\delta)y_i\leq x_i\leq (1+\delta)y_i,\]
			we have
			\begin{align*}
				P_1(x)
				&= \alpha \frac{x_1}{x_1+\frac{x_2x_3}{x_2+x_3}}
				+(1-\alpha)  \frac{\frac{x_1x_3}{x_1+x_3}}{x_2+\frac{x_1x_3}{x_1+x_3}}\\
				&\leq \alpha \frac{(1+\delta)y_1}{(1+\delta)y_1+ (1-\delta)\frac{y_2y_3}{y_2+y_3}}
				+(1-\alpha)  \frac{(1+\delta)\frac{y_1y_3}{y_1+y_3}}{(1-\delta)y_2+(1+\delta)\frac{y_1y_3}{y_1+y_3}}
				\leq \frac{1+\delta}{1-\delta} P_1(y).
			\end{align*}
			The first inequality holds because $(a, b)\in (0,\infty)^2\mapsto a/(a+b)$ is an increasing function of~$a$ and a decreasing function of $b$, and $(a,b)\in(0,\infty)^2\mapsto {ab}/(a+b)$ is an increasing function of both~$a$ and~$b$.
			Similarly, one gets $P_1(x)\geq \frac{1-\delta}{1+\delta}P_1(y)$, and, similarly,
			\begin{equation}\label{eq:lip}
				\frac{1-\delta}{1+\delta}P_i(y)\leq P_i(x)\leq \frac{1+\delta}{1-\delta}P_i(y),
			\end{equation}
			for all $i=1,2,3$.
			
			By Lemma~\ref{lem:bounds_KMS}, there exist three random variables $K_1$, $K_2$, and $K_3$ such that, for all $i=1,2,3$, almost surely for all $n\geq 0$,
			\[\bar C_{G_i}(n)\leq \frac{n-K_i n^{\alpha_i}}{\ell_i},\]
			where we recall that $\bar C_{G_i}(n)$ is the effective conductance of $G_i$ (between its source and its nest) at time $n$ of the single-nest ant process run on $G_i$,
			and where we have set $\alpha_i = \ell_i/(\ell_i+1)$.
			This implies that, for all $i=1,2,3$,
			\begin{equation}
				\frac{n}{\ell_i}\left(1 - \ell_i K_i n^{\alpha_i-1}\right) \leq 
				\bar C_{G_i}(n)  \leq 
				\frac{n}{\ell_i}\left(1 + \ell_i K_i n^{\alpha_i-1}\right).
			\end{equation}
			By Lemma~\ref{lemma:Ni_poly}, for all $i=1,2,3$, almost surely for all $n$ large enough, $N_i(n) \geq n^{\eps_i}$. 
			Thus, if we let $\Delta_n \coloneqq \sup_i \{\ell_i K_i n^{(\alpha_i-1)\eps_i}\}$, then, almost surely for all $n$ large enough,
			\begin{equation}
				\frac{N_i(n)}{\ell_i}\left(1 - \Delta_n\right)  
				\leq	\bar C_{G_i}(N_i(n)) = C_{i}(n)
				\leq 	\frac{N_i(n)}{\ell_i}\left(1 + \Delta_n\right),
				\label{eqn:bound_conductance_delta_n}
			\end{equation} 
			where the middle equality holds by Lemma~\ref{lemma:single_nest_in_multi_nest}.
			Thus,~\eqref{eq:lip} applies to $\delta = \Delta_n$, $x = {\bf C}(n)$, and $y = (N_1(n)/\ell_1, N_2(n)/\ell_2, N_3(n)/\ell_3)$ , which gives
			\begin{equation}\label{eq:use_later}
				\bigg(\frac{1-\Delta_n}{1+\Delta_n}-1\bigg) p_i({\bf N}(n))
				\leq r_i(n)\leq 
				\bigg(\frac{1+\Delta_n}{1-\Delta_n}-1\bigg) p_i({\bf N}(n)).
			\end{equation}
			Because, by definition, $p_i$ takes values in $[0,1]$, we get that
			\[\|r(n)\|_{\infty} \leq \mathcal O(\Delta_n),\]
			almost surely as $n\uparrow\infty$, because $\Delta_n\to0$ almost surely, by definition.
			By definition, $\Delta_n = \mathcal O(n^{-\varepsilon})$ for some $\varepsilon>0$. Thus,
			$\sum_{n\geq 1} \|r(n)\|/n<\infty$ as required.
		\end{proof}
		
		The following lemma, which follows straightforwardly from~\eqref{eq:use_later}, will be used later in the proof of Theorem~\ref{thm:trianglesSP}:
		\begin{lemma}\label{cor:bound_P_p} 
			Let $i\in\{1, 3\}$. Almost surely for all $n$ large enough,
			\[\frac{1-\Delta_n}{1+\Delta_n}\cdot p_i(\hat{\bf N}(n)) 
			\leq \mathbb P\big(N_i(n+1) = N_i(n)+1 | \hat {\bf N}(n)\big)
			\leq \frac{1+\Delta_n}{1-\Delta_n}\cdot p_i(\hat{\bf N}(n)),
			\]
			where $\Delta_n =  \sup_i \{ \ell_i K_i n^{(\alpha_i-1)\eps_i}\}$.
		\end{lemma}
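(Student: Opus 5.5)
This lemma is essentially a rewriting of the two-sided bound~\eqref{eq:use_later} obtained in the proof of Lemma~\ref{lemma:conv_rn}, so I would derive it directly from there. First I recall that, for $i\in\{1,3\}$, the conditional probability $\mathbb P(N_i(n+1)=N_i(n)+1\mid\mathcal F_n)$ equals $P_i(\mathbf C(n))$. This was shown in the proof of Proposition~\ref{prop:isastochapprox}, where $\mathbf C(n)$ is the vector of effective conductances of $G_1,G_2,G_3$ at time~$n$. Also, by homogeneity of degree zero of $P_i$, we have $p_i(\hat{\mathbf N}(n))=p_i(\mathbf N(n))=P_i(N_1(n)/\ell_1,N_2(n)/\ell_2,N_3(n)/\ell_3)$.

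Next I invoke~\eqref{eqn:bound_conductance_delta_n}. For all $n$ large enough, and almost surely, each $C_i(n)$ lies between $(1-\Delta_n)N_i(n)/\ell_i$ and $(1+\Delta_n)N_i(n)/\ell_i$. This uses Lemma~\ref{lem:bounds_KMS} applied to the single-nest processes on $G_i$ via Lemma~\ref{lemma:single_nest_in_multi_nest}, together with the polynomial lower bound $N_i(n)\ge n^{\eps_i}$ of Lemma~\ref{lemma:Ni_poly}. Since $\Delta_n\to0$, we also have $\Delta_n<1$ eventually, so the ratios $\frac{1\pm\Delta_n}{1\mp\Delta_n}$ are well defined and positive. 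Applying the monotonicity estimate~\eqref{eq:lip} with $\delta=\Delta_n$, $x=\mathbf C(n)$ and $y=(N_j(n)/\ell_j)_j$ then yields
\[\frac{1-\Delta_n}{1+\Delta_n}\,p_i(\hat{\mathbf N}(n))\le P_i(\mathbf C(n))\le\frac{1+\Delta_n}{1-\Delta_n}\,p_i(\hat{\mathbf N}(n)),\]
which is exactly~\eqref{eq:use_later} with $p_i(\hat{\mathbf N}(n))$ added to each side.

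The only point needing care is that the statement conditions on $\hat{\mathbf N}(n)$ rather than on $\mathcal F_n$. Here I would argue that the bounds above hold $\mathcal F_n$-almost surely, with the random constants $K_i$ (hence $\Delta_n$) treated as given. The cleanest way is to read the lemma as a statement about the $\mathcal F_n$-conditional probability, which is how it is used later. Alternatively, one can pass the inequalities through the tower property, using that the outer bounds are functions of $\hat{\mathbf N}(n)$ once the $K_i$ are fixed. The randomness of $K_i$ is the main subtlety: it is most cleanly handled by working on the almost sure event where Lemma~\ref{lem:bounds_KMS} and Lemma~\ref{lemma:Ni_poly} hold, and I expect this to be the only non-routine step.

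For $i=1$ one could alternatively note $P_1=1-P_2$, but that is not needed. The argument is the same for $i=1$ and $i=3$, since~\eqref{eq:lip} was established for every $i$.
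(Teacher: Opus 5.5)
Your proposal is correct and follows the paper's argument: the paper derives the lemma directly from \eqref{eq:use_later}. That is, you combine $\mathbb P(N_i(n+1)=N_i(n)+1\mid\mathcal F_n)=P_i(\mathbf C(n))$ with the conductance bounds \eqref{eqn:bound_conductance_delta_n} and the monotonicity estimate \eqref{eq:lip} at $\delta=\Delta_n$, then add $p_i(\hat{\bf N}(n))$ to each side. Your main reading of the conditioning as being on $\mathcal F_n$ is the right one, and it matches how the lemma is fed into Corollary~\ref{cor:urnstochdomin}; the tower-property alternative would not give the pathwise bound, because $\Delta_n$ depends on the random $K_i$, which are not functions of $\hat{\bf N}(n)$.
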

		
		\begin{remark}
			When every $G_i$ is a simple path, then $\Delta_n=0$, and it implies that $r(n) = 0$. 
			Intuitively, the fact that $\sum_n n^{-1}\| r(n)\| <\infty$ indicates that, asymptotically, the model behaves as if $G_1, G_2$, and $G_3$ were all replaced by simple paths with respectively $\hmin(G_1)$, $\hmin(G_2)$, and $\hmin(G_3)$ edges in series.
		\end{remark}

		\section{Proof of the main result (Theorem \ref{thm:trianglesSP})}
		
		The aim of this section is to prove Theorem \ref{thm:trianglesSP}.
		Without loss of generality, we assume throughout that $\ell_1 \leq \ell_2$.
		In Section~\ref{sect:stratandpremresults}, we have proved that 
		the multi-nest ants process restricted to $G_1$ (resp.\ $G_2$, resp.\ $G_3$) behaves like a single-nest ants process (see Proposition~\ref{lemma:single_nest_in_multi_nest}).
		Therefore, by the results of~\cite{kious2020finding} on the single-nest ants process on series-parallel graphs, 
		Theorem~\ref{thm:trianglesSP} follows from the convergence of $\hat{\bf N}(n)$.
		
		To prove convergence of $\hat{\bf N}(n)$ to the limits announced in Theorem~\ref{thm:trianglesSP}, we use the fact that, as proved in Proposition~\ref{prop:isastochapprox}, $(\hat{\bf N}(n))_{n\geq 0}$ is a stochastic approximation with vector-field~$F$. Heuristically, this means that $(\hat {\bf N}(n))_{n\geq 0}$ asymptotically ``follows the flow'' of the ODE $\dot{y} = F(y)$.
		We use Bendixson–Dulac theorem to prove that any solution of the ODE $\dot{y} = F(y)$ converges to a stable zero of~$F$ (see Lemma \ref{lemma:triangles_noorbits}). 
		We then deduce that the normalized process $(\hat{\bf N}(n))_{n\geq 0}$ 
		also converges to a zero of $F$ (see Corollary \ref{cor:cv}).
		Then it remains to determine which of the zeros of $F$ the process converges to. 
		We state and prove a series of lemmas (Lemmas \ref{lemma:minw1}, \ref{lemma:minw3} and \ref{lemma:majw1}) that enable us to eliminate the unstable zeros one by one. 
		The main idea is to compare the process $(\hat {\bf N}(n))_{n\geq0}$ 
		with one-dimensional processes that we analyse using convergence results on generalized P\'olya urn processes given in Section~\ref{sect:GPurn}. 
		
		\begin{figure}[t]
			\centering\def\svgwidth{0.55\columnwidth}
			\includegraphics[width=0.6\textwidth]{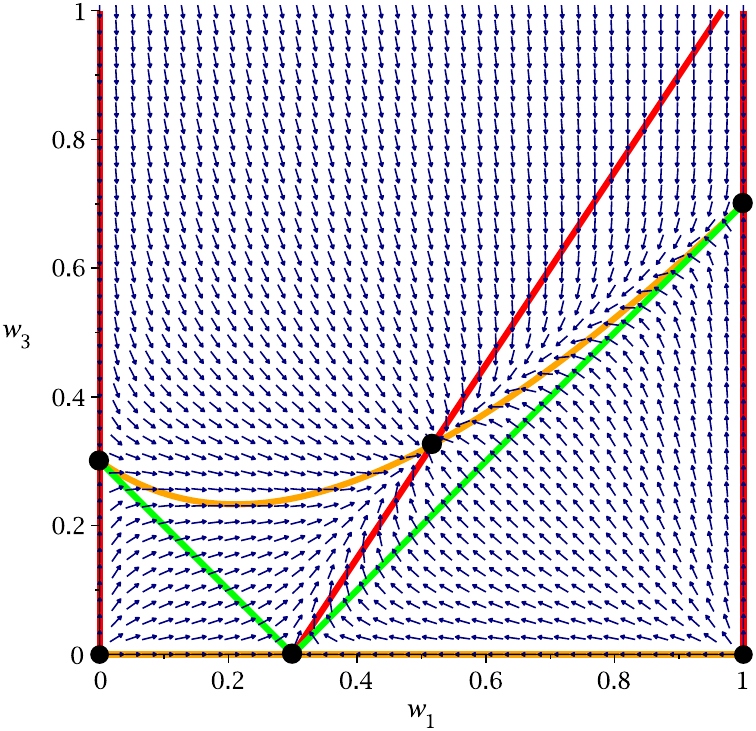}
			\caption{\small 
				Here $\ell_1 = 2$, $\ell_2 = 4$, $\ell_3 = 3$ and $\alpha = 0.3$. In particular, $\ell_2 < \ell_1+ \ell_3$ and $\ell_3 < \ell_1+ \ell_2$ (this case is of particular interest since $(\beta_1,\beta_3)\in (0,1)^2$).
				Blue arrows represent the vector field~$F$. 
				Orange curves represent the solutions to $F_3(w)=0$, while red ones are solutions to $F_1(w)=0$. 
				The black dots are thus the solutions to $F(w) = 0$; these zeros are $ (\alpha,0)$, $(0,\alpha)$, $(1,1-\alpha)$, $(0,0)$, $(1,0)$ and $(\beta_1,\beta_3)$, with $(\beta_1,\beta_3)\in[0,1]^2$ if and only if $\ell_2 \leq \ell_1+ \ell_3$ and $\ell_3 \leq \ell_1+ \ell_2$, see Lemma~\ref{lem:zerosofF}. 
				The two green lines represent the solutions to $w_3 + w_1 = \alpha$ and $w_3 - w_1 = -\alpha$; by Lemma~\ref{lemma:inequalities_for_the_nests}, $\hat{\bf N}(n)$ will almost surely, eventually, be above this green line.}
			\label{fig:ideegenerale}
		\end{figure}

		We provide in Figures~\ref{fig:ideegenerale} and~\ref{fig:imagesMapleautrescas} some pictures of the vector field $F$ for some particular values of $\alpha$, $\ell_1$, $\ell_2$ and $\ell_3$. These give an idea of what the flow of the ODE $\dot y = F(y)$ looks like and to which zero of $F$ the stochastic approximation ``should'' converge to.

		\begin{figure}[htbp]
			\centering
			\begin{subfigure}[b]{0.45\textwidth}
				\centering\def\svgwidth{\columnwidth}
				\includegraphics[page=1,width=0.95\columnwidth]{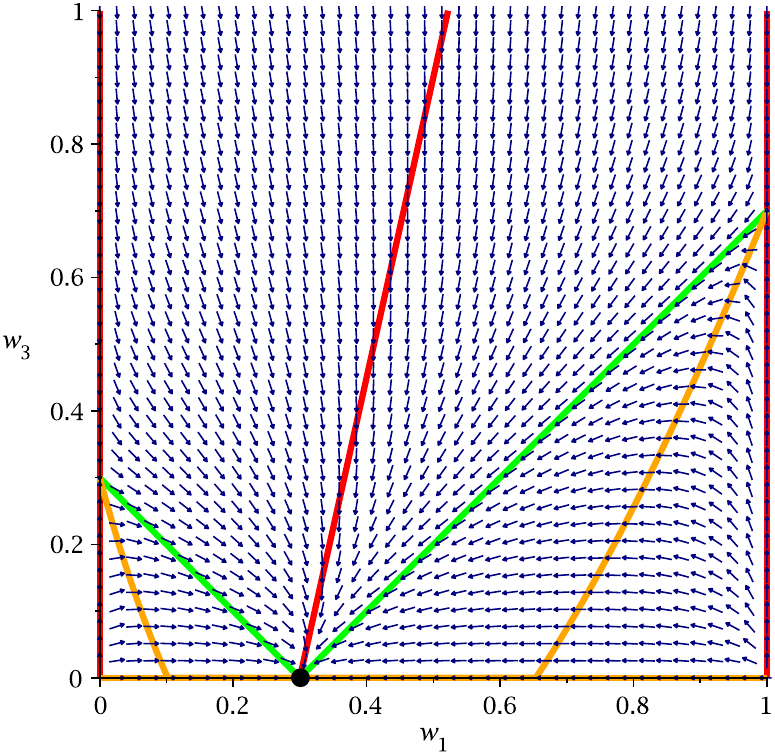}
				\hfill
				\subcaption{Here $\ell_1 = 2$, $\ell_2 = 4$, $\ell_3 = 9$ and $\alpha = 0.3$. This figure illustrates what happens when $\ell_3 > \ell_1+ \ell_2$.}
				\label{subfig:b}
			\end{subfigure}
			\hfill
			\begin{subfigure}[b]{0.45\textwidth}
				\centering\def\svgwidth{\columnwidth}
				\includegraphics[page=1,width=0.95\columnwidth]{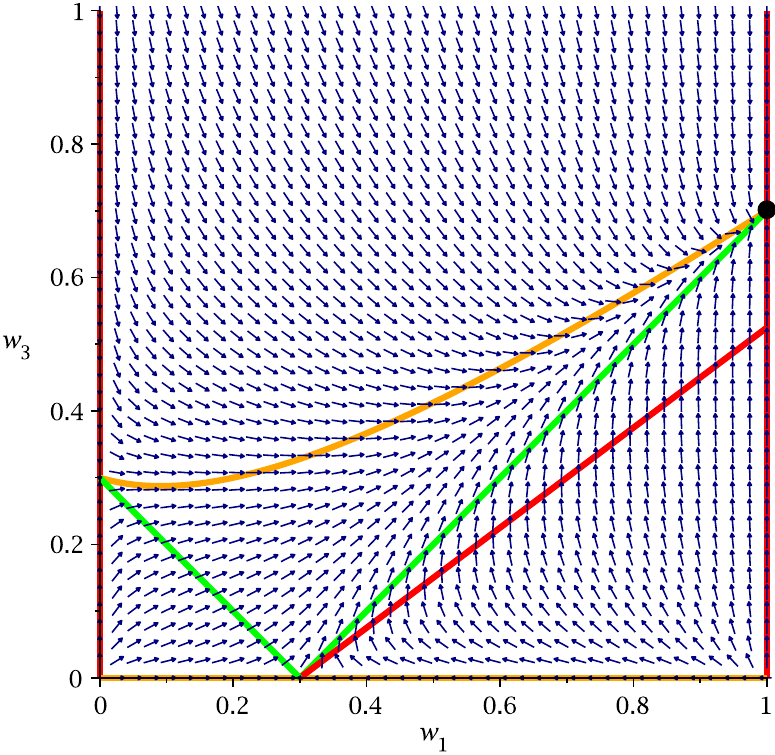}
				\hfill
				\subcaption{Here $\ell_1 = 2$, $\ell_2 = 6$, $\ell_3 = 3$ and $\alpha = 0.3$. This figure illustrates what happens when $\ell_2 > \ell_1+ \ell_3$.}
				\label{subfig:c}
			\end{subfigure} \hfill 
			\begin{subfigure}[b]{0.45\textwidth}
				\centering\def\svgwidth{\columnwidth}
				\includegraphics[page=1,width=0.95\columnwidth]{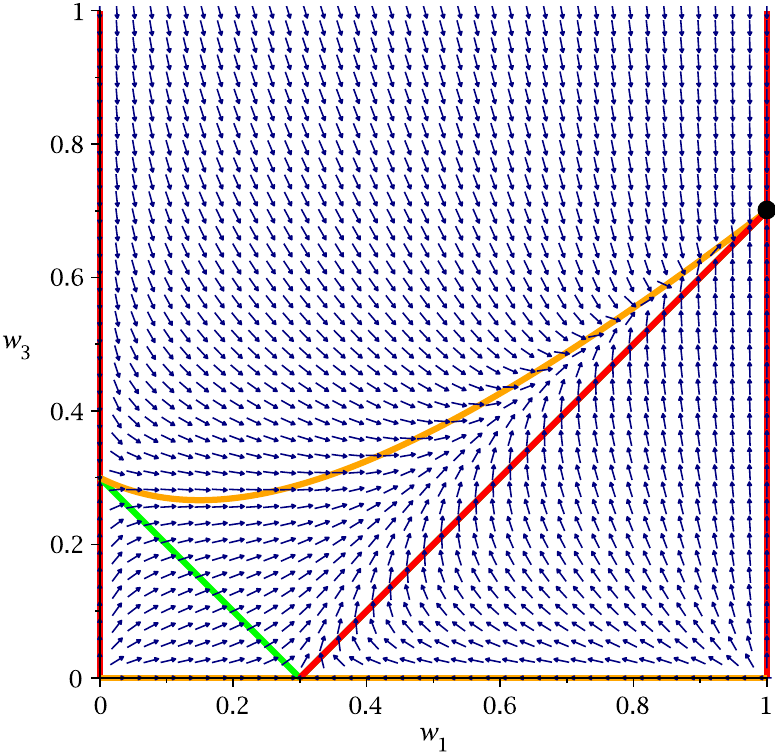}
				\hfill
				\subcaption{Here $\ell_1 = 2$, $\ell_2 = 5$, $\ell_3 = 3$ and $\alpha = 0.3$. This figure illustrates what happens when $\ell_2= \ell_1+ \ell_3$ (i.e.\ $(\beta_1,\beta_3)=(1,1-\alpha)$).}
				\label{subfig:d}
			\end{subfigure} \hfill
			\begin{subfigure}[b]{0.45\textwidth}
				\centering\def\svgwidth{\columnwidth}
				\includegraphics[page=1,width=0.95\columnwidth]{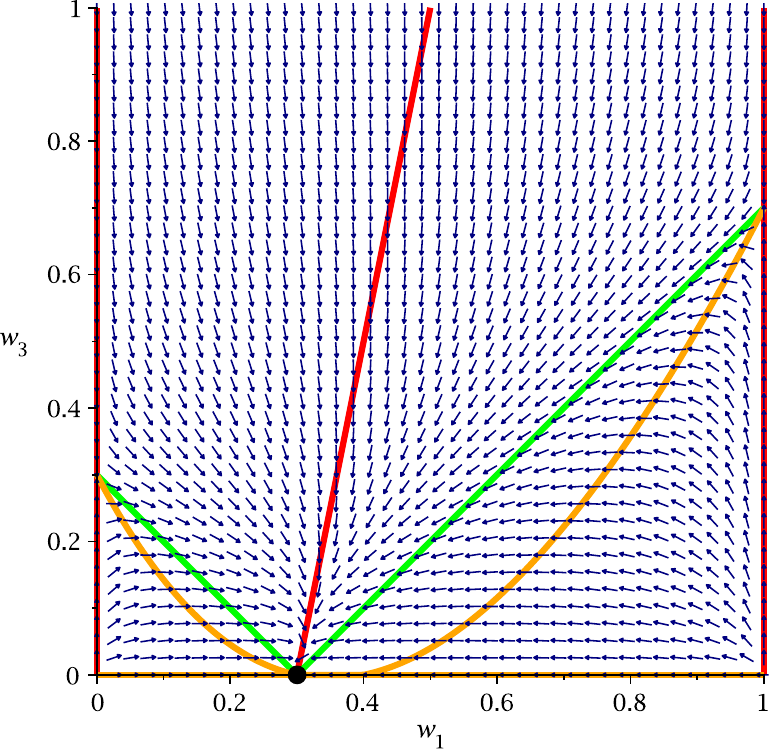}
				\hfill
				\subcaption{Here $\ell_1 = 2$, $\ell_2 = 3$, $\ell_3 = 5$ and $\alpha = 0.3$. This figure illustrates what happens when $\ell_3 = \ell_1+ \ell_2$ (i.e.\ $(\beta_1,\beta_3)=(\alpha,0)$).}
				\label{subfig:e}
			\end{subfigure}
			\caption{\small This figure represents the same objects as Figure \ref{fig:ideegenerale} (see its caption for a detailed explanation), but in different cases that highlight the distinct behaviours that can occur, depending on the parameters $\ell_1$, $\ell_2$ and $\ell_3$. Here we have left only one black dot on every figure, which is the almost sure limit of $\hat{\bf N}(n)$.}
			\label{fig:imagesMapleautrescas}
		\end{figure}

	\subsection{The zeros of the vector field~$F$}
	Before starting the proof, recall that, by definition, for all $n\geq 0$, $\hat{N}_1(n) + \hat{N}_2(n) = 1$. Thus, instead of considering the $3$-dimensional vector $\hat{\bf N}(n) = (N_1(n), N_2(n), N_3(n))/n$, it is enough to consider the two-dimensional vector $(N_1(n), N_3(n))/n$, which we also let $\hat{\bf N}(n)$ denote, by a slight abuse of notation.
	This implies that, instead of considering $F$ as a vector field on $\mathbb R^3$, we consider instead the vector field $(w_1, w_3)\mapsto (F_1(w_1, 1-w_1, w_3), F_3(w_1, 1-w_1, w_3))$, which, by a slight abuse of notation, we also let $F$ denote.
	\begin{lemma}\label{lem:zerosofF}
		For all $(w_1, w_2, w_3)\in \mathcal E$, 
		$F(w_1, w_2, w_3) = 0$ if and only if
		\[(w_1,w_3) \in \left\{(0,0), (0,\alpha), (\alpha,0), (\beta_1,\beta_3), (1,1-\alpha), (1,0)\right\},\]
		where $(\beta_1, \beta_3)$ is as in Theorem~\ref{thm:trianglesSP}.
	\end{lemma}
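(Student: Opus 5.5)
The plan is to turn $F=0$ into the pair of equations $P_1(c)=w_1$ and $P_3(c)=w_3$, where $c=(c_1,c_2,c_3)=(w_1/\ell_1,(1-w_1)/\ell_2,w_3/\ell_3)$ and $D(c)=c_1c_2+c_1c_3+c_2c_3$. Using the closed forms in~\eqref{eqn:defP1} and~\eqref{eqn:defP3}, these read
\[
w_1D=\alpha c_1c_2+c_1c_3,\qquad w_3D=\alpha c_2c_3+(1-\alpha)c_1c_3 .
\]
I would split into a boundary case, where at least one of $w_1$, $1-w_1$, $w_3$ vanishes, and an interior case, where $w_1\in(0,1)$ and $w_3>0$.

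\emph{Boundary case.} Here $D$ collapses to a single monomial whenever exactly one $c_i$ is zero, so each sub-case is read off directly.
\begin{itemize}
\item If $w_3=0$ and $w_1\in(0,1)$, then $P_1=\alpha c_1c_2/(c_1c_2)=\alpha$. Hence $w_1=\alpha$, and $P_3=0$ is consistent; this gives $(\alpha,0)$.
\item If $w_1=0$ and $w_3>0$, then $P_1=0$ and $P_3=\alpha c_2c_3/(c_2c_3)=\alpha$, giving $(0,\alpha)$.
\item If $w_1=1$ and $w_3>0$, then $c_2=0$, so $P_1=c_1c_3/(c_1c_3)=1$ and $P_3=1-\alpha$, giving $(1,1-\alpha)$.
\end{itemize}
The corners $(0,0)$ and $(1,0)$ have two vanishing $c_i$'s, so $P$ is of the form $0/0$ there. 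I would note that $F$ is meant to be extended continuously, or simply as the closure of the formula, on $\mathcal E$. These corners are then declared zeros, which matches the list and the fact that they lie outside $\mathcal E'$.

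\emph{Interior case.} Divide the first equation by $c_1=w_1/\ell_1$ and the second by $c_3=w_3/\ell_3$. This gives
\[
\ell_1D=\alpha c_2+c_3\quad\text{and}\quad \ell_3D=\alpha c_2+(1-\alpha)c_1 .
\]
Eliminating $D$ yields a relation that is linear in $c$:
\[
\ell_3(\alpha c_2+c_3)=\ell_1(\alpha c_2+(1-\alpha)c_1).
\]
Since $c_2=(1-w_1)/\ell_2$, this expresses $w_3$ as an affine function of $w_1$. Substituting it back into $\ell_1D=\alpha c_2+c_3$ gives a single polynomial equation of degree at most two in $w_1$.

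The main obstacle is this algebra. I expect the quadratic to factor, with one root being spurious: it should correspond either to a boundary solution already found (for instance $w_1=1$) or be excluded by positivity. The remaining root should simplify to the stated $\beta_1$, and $w_3$ then evaluates to $\beta_3$. I would first try to guess and divide out the spurious factor, then verify the formulas by substitution rather than by solving from scratch.

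Finally, I would remark that $(\beta_1,\beta_3)$ lies in the interior of the domain exactly when $\ell_2<\ell_1+\ell_3$ and $\ell_3<\ell_1+\ell_2$, as noted in Remark~\ref{rem:beta1andbeta3}. Otherwise $(\beta_1,\beta_3)$ falls outside $[0,1]^2$ or coincides with a boundary zero, and in either case the set of zeros is still contained in the stated list.
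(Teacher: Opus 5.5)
Your plan is correct, and the factorisation you only \emph{expect} does hold. You can see it without substituting anything: with $c_1=w_1/\ell_1$, $c_2=(1-w_1)/\ell_2$, $c_3=w_3/\ell_3$,
\[
\ell_2\ell_3\big(\alpha c_2+c_3-\ell_1 D\big)=(1-w_1)\big(\ell_3(\alpha-w_1)+(\ell_2-\ell_1)w_3\big).
\]
So the spurious root of your quadratic is exactly $w_1=1$, and in the interior your first equation is already the line $\ell_3(\alpha-w_1)+(\ell_2-\ell_1)w_3=0$. This is the same factorisation of $F_1$ that the paper writes down (the line $w_3=\gamma(w_1)$), so the two routes agree up to this point.

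They differ in how the second equation is handled. The paper factors $F_3$ as well and intersects $w_3=\gamma(w_1)$ with the $F_3$-nullcline $w_3=g(w_1)$. It organises all six zeros, boundary ones included, as a $3\times 2$ table of nullcline intersections. You instead evaluate the boundary directly and, in the interior, replace the second equation by the linear relation $\ell_3(\alpha c_2+c_3)=\ell_1(\alpha c_2+(1-\alpha)c_1)$ obtained by eliminating $D$. The interior zero is then the intersection of two lines. Cramer's rule gives $w_1=\alpha\ell_1(\ell_3+\ell_2-\ell_1)$ divided by exactly the denominator of $\beta_1$, and uniqueness is automatic.

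What each route buys:
\begin{itemize}
\item Your route needs less algebra and never divides by $\ell_2-\ell_1$; the paper's $\gamma$ is undefined when $\ell_1=\ell_2$.
\item The paper's route produces the factored forms \eqref{eq:F1}--\eqref{eq:F3}, which are reused in the Dulac-function argument of Lemma~\ref{lemma:triangles_noorbits}.
\end{itemize}
Note also that the determinant can vanish, e.g.\ for $(\ell_1,\ell_2,\ell_3)=(1,10,1)$ and $\alpha=8/9$. Then there is no interior zero and $(\beta_1,\beta_3)$ is undefined. Neither you nor the paper mentions this case; it is harmless because it requires $\ell_2>\ell_1+\ell_3$.

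One correction concerns the corners: $F$ does \emph{not} extend continuously to $(0,0)$ or $(1,0)$.
\begin{itemize}
\item Near $(0,0)$, $F_1\equiv 0$ on $\{w_1=0\}$, while $F_1=\alpha-w_1\to\alpha$ along $\{w_3=0\}$.
\item At $(1,0)$, $F_1\equiv 0$ on $\{w_1=1\}$, while $F_1\to\alpha-1$ along $\{w_3=0\}$.
\end{itemize}
So these two points are ``zeros'' only by the convention the paper uses implicitly. That convention reads zeros off the numerators in \eqref{eq:F1} and \eqref{eq:F3}, whose common denominator vanishes exactly at these two corners. State it as that convention rather than as a continuous extension. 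It costs nothing, since both corners lie outside $\mathcal E'$ and are excluded by Lemma~\ref{lemma:inequalities_for_the_nests} anyway.
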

	
	\begin{proof}
		By definition, for all $w\in\mathcal E$,
		\begin{align}
			F_1(w) 
			&= \alpha \cdot \frac{\frac{w_1}{\ell_1}}{\frac{w_1}{\ell_1}+\frac{w_2w_3}{\ell_3w_2+\ell_2w_3}}
			+ (1-\alpha)\cdot \frac{\frac{w_1w_3}{\ell_3w_1+\ell_1w_3}}{\frac{w_2}{\ell_2} + \frac{w_1w_3}{\ell_3w_1+\ell_1w_3}}-w_1
			\notag\\
			&= \frac{\alpha w_1(\ell_3w_2+\ell_2w_3)}{w_1(\ell_3w_2+\ell_2w_3) + \ell_1w_2w_3}
			+ \frac{(1-\alpha)\ell_2w_1w_3}{(\ell_3w_1+\ell_1w_3)w_2 + \ell_2w_1w_3}-w_1
			\notag\\
			&= w_1\cdot \frac{\alpha\ell_3 w_2 + \ell_2w_3 - \ell_3w_1w_2-\ell_2 w_1w_3-\ell_1w_2w_3}{\ell_3w_1w_2+\ell_2 w_1w_3+\ell_1w_2w_3}
			\notag\\
			&=\frac{w_1 (1-w_1) \left( \ell_3 (\alpha -w_1) + (\ell_2-\ell_1) w_3 \right)}{w_3(\ell_1 + w_1(\ell_2 - \ell_1)) + \ell_3 w_1 (1-w_1)}.
			\label{eq:F1}
		\end{align}
		In the last equality, we have used the fact that $w_2 = 1-w_1$ for all $w\in\mathcal E$.
		Thus, $F_1(w) = 0$ if and only if 
		\begin{equation}\label{eq:def_gamma}
			w_1 = 0 \text{ \ or \ } w_1 = 1 \text{ \ or \ }
			w_3 = \frac{\ell_3(w_1-\alpha)}{\ell_2-\ell_1} \eqqcolon \gamma(w_1).
		\end{equation}
		Similarly,
		\begin{equation}\label{eq:F3}
			F_3(w) = \frac{w_3 \left(\ell_2(1-\alpha)w_1 + \ell_1 \alpha (1-w_1) - \ell_3 w_1 (1-w_1) - w_3 (\ell_1 + w_1(\ell_2-\ell_1))\right)}{w_3(\ell_1 + w_1(\ell_2 - \ell_1)) + \ell_3 w_1 (1-w_1)},
		\end{equation}
		and thus, $F_3(w) = 0$ if and only if 
		\begin{equation}\label{eq:def_g}
			w_3 = 0 \text{ \ or \ } 
			w_3 = \frac{\ell_2(1-\alpha)w_1 + \ell_1 \alpha (1-w_1) - \ell_3 w_1 (1-w_1)}{\ell_1 + w_1(\ell_2-\ell_1)} \eqqcolon g(w_1).
		\end{equation}
		Finally, $g(w_1) = \gamma(w_1)$ if and only if $w_1 = \beta_1$, and $\gamma(\beta_1) = \beta_3$.
		
		Combining (\ref{eq:def_gamma}) and (\ref{eq:def_g}) allows us to conclude: 
		Indeed, we first note that $w_1=\alpha$ is the only solution to $\gamma(w_1) = 0$. 
		Then, in the table below, any solution of $F(w) = 0$ must satisfy one of the equations in the left-most column and one of the equations in the top line: for each combination, there is a unique solution, which we write in the corresponding cell:
		\begin{equation*}
			\begin{array}{ |c|c|c|c| } 
				\hline
				& w_1 = 0 		& w_1 = 1 	& w_3 = \gamma(w_1)\\ \hline
				w_3=0			& (0,0) 		& (1,0) 	& (\alpha,0) \\ \hline
				w_3 = g(w_1)	& (0,\alpha)	& (1,1-\alpha)		& (\beta_1,\beta_3) \\ 
				\hline
			\end{array}
		\end{equation*}
		For example, $w =  (\beta_1, \beta_3)$ is the unique solution of $w_3 = g(w_1)=\gamma(w_1)$.
	\end{proof}

	\begin{remark}\label{rem:beta1andbeta3}
		Note that that $(\beta_1,\beta_3) \in [0,1]^2$ if and only if $\ell_2 \leq \ell_1+\ell_3$ and $\ell_3 \leq \ell_1+\ell_2$. 
		Indeed, a straightforward computation gives
		\begin{equation}
			\beta_1 < 1 \ \iff \ \ell_2 < \ell_1 + \ell_3 \ \et \ \beta_3 > 0 \ \iff \ \ell_3 < \ell_1 + \ell_2  \label{eqn:conditionsonBeta}
		\end{equation}
		(and also $\beta_1 = 1 \iff \ell_2 = \ell_1 + \ell_3$ and $\beta_3 = 0 \iff \ell_3 = \ell_1 + \ell_2$).
		Moreover, since $\beta_3 = \gamma(\beta_1)$, $\beta_3\geq 0$ implies directly that $\beta_1 \geq \alpha >0$. 
		Then, using the facts that $\alpha\in(0,1)$ and, 
		for any $w_1\in[0,1]$, $w_1(1-w_1)\geq 0$, we get 
		\begin{equation*}
			\ell_2(1-\alpha)w_1 + \ell_1 \alpha (1-w_1) - \ell_3 w_1 (1-w_1) < \ell_2 w_1 + \ell_1(1-w_1),
		\end{equation*}
		i.e.\ $g(w_1)< 1$, which implies that $\beta_3 = g(\beta_1) < 1$.
	\end{remark}

	\subsection{Flow of the ODE}
	\begin{lemma}\label{lemma:triangles_noorbits}
		For all $w\in [0,1]^2$, we let $t\in[0,\infty)\mapsto y_w(t)$ be the solution of $\dot{y} = F(y)$ such that $y_w(0) = w$. 
		For all $w\in [0,1]^2$, $\lim_{t\to\infty} y_w(t)$ exists.
	\end{lemma}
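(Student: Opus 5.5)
The plan is to use planar dynamics on the compact square $[0,1]^2$. First I would check that the square is forward invariant under $\dot y = F(y)$. By \eqref{eq:F1}, $F_1$ has the factor $w_1(1-w_1)$, so it vanishes on $w_1=0$ and on $w_1=1$. By \eqref{eq:F3}, $F_3$ has the factor $w_3$, so it vanishes on $w_3=0$. On the top edge $w_3=1$, Remark~\ref{rem:beta1andbeta3} gives $g(w_1)<1$, which means $F_3<0$ there. Hence trajectories cannot leave the square. One technical point is the denominator $w_3(\ell_1+w_1(\ell_2-\ell_1))+\ell_3w_1(1-w_1)$, which vanishes at the corners $(0,0)$ and $(1,0)$. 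I would handle this by multiplying $F$ by this positive denominator. This gives a polynomial field $\tilde F$ with the same orbits away from the zero set of the denominator, traversed with a positive time change. Convergence of orbits is unaffected, and the two corners are equilibria of both fields.

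Next I would invoke Poincaré--Bendixson. For each $w$, the $\omega$-limit set of $y_w$ is nonempty, compact and connected, since the trajectory stays in the compact square. By Lemma~\ref{lem:zerosofF} there are only finitely many equilibria. So the $\omega$-limit set is one of three things: a single equilibrium, a periodic orbit, or a cycle of equilibria joined by connecting orbits (a homoclinic or heteroclinic cycle).

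The key step, and the main obstacle, is to exclude periodic orbits and cycles of equilibria. I would use the Bendixson--Dulac criterion with a Dulac function of the form $B(w)=w_1^{a}(1-w_1)^{b}w_3^{c}\,D(w)$, where $D$ is the denominator. The natural first try is $B = D/(w_1(1-w_1)w_3)$. This cancels the invariant-line factors and leaves
\[
BF = \Big(\frac{\ell_3(\alpha-w_1)+(\ell_2-\ell_1)w_3}{w_3},\ \frac{\ell_2(1-\alpha)w_1+\ell_1\alpha(1-w_1)-\ell_3w_1(1-w_1)-w_3(\ell_1+w_1(\ell_2-\ell_1))}{w_1(1-w_1)}\Big).
\]
Its divergence is $-\ell_3/w_3-(\ell_1+w_1(\ell_2-\ell_1))/(w_1(1-w_1))$, which is strictly negative on the open square. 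So Dulac's criterion excludes periodic orbits in the interior. If that computation failed, I would adjust the exponents $a,b,c$ and solve for a sign-definite divergence.

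Periodic orbits cannot meet the boundary, because the boundary edges are invariant or, on the top edge, crossed inward. Cycles of equilibria would have to use boundary segments or the interior. Interior loops enclosing a region are excluded by the same Dulac argument, via Green's theorem applied to the region bounded by the cycle. Cycles lying along the boundary are excluded by looking at the one-dimensional dynamics on each invariant edge. On $w_1=0$ the flow moves monotonically towards $w_3=\alpha$. On $w_1=1$ it moves towards $w_3=1-\alpha$. On $w_3=0$ it moves towards $w_1=\alpha$. Because these directions are inconsistent with a closed loop, no boundary cycle can form. Hence the $\omega$-limit set is a single equilibrium, and $\lim_{t\to\infty}y_w(t)$ exists.
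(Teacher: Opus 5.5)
Your argument is the paper's. The paper also applies Bendixson--Dulac with $g=-D/(w_1(1-w_1)w_3)$, which is your $B$ up to sign, obtaining divergence $\ell_3/w_3+\bigl(\ell_1+w_1(\ell_2-\ell_1)\bigr)/(w_1(1-w_1))>0$, and then concludes by Poincar\'e--Bendixson.

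Your extra steps go beyond the paper, whose version of Poincar\'e--Bendixson silently omits cycles of equilibria: forward invariance of the square, desingularising $(0,0)$ and $(1,0)$ via $\tilde F=DF$, and excluding such cycles. Your case split leaves one loose end, namely cycles that meet the boundary at $(0,\alpha)$, $(\alpha,0)$ or $(1,1-\alpha)$, or that mix edge and interior orbits. This closes by a local check: the corners are hyperbolic sources of $\tilde F$, and each of those three points is either attracting within the square or a saddle whose stable manifold lies in its own edge. Hence no cycle in an $\omega$-limit set meets the boundary, and your Green's-theorem argument applies to what remains.
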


	\begin{proof}
		The idea of this proof is to first use the Bendixson–Dulac theorem (see for example \cite[Section 4.1]{hofbauer1998evolutionary}) to show that the ODE has no non-constant periodic solution (which we call orbit). 
		And then use the Poincaré-Bendixson theorem (see, for example \cite[Theorem 4.1.1]{hofbauer1998evolutionary}) to conclude the proof.
		
		{The Bendixson-Dulac theorem} states that, if there exists a continuously-differentiable function $g : [0,1]^2\to \mathbb R$ such that, for all $w\in[0,1]^2$ (except, possibly, in a set of measure~0),
		\[\frac{\partial(gF_1)}{\partial w_1}(w) + \frac{\partial(gF_2)}{\partial w_2}(w)>0,\]
		then the ODE $\dot y = F(y)$ admits no orbit.
		
		We let 
		\[g(w) = -\frac{w_3(\ell_1 + w_1(\ell_2 - \ell_1)) + \ell_3 w_1 (1-w_1)}{w_1w_3(1-w_1)}.\]
		for all $w\in(0,1)\times(0,1]$, and $g(w) = 0$ otherwise. 
		By~\eqref{eq:F1} and~\eqref{eq:F3},
		\begin{align*}
			-(gF_1)(w)
			&= \frac{\ell_3(\alpha - w_1)}{w_3} +  \ell_2 - \ell_1, \quad\text{ and }\\
			-(gF_3)(w)
			&= \frac{\ell_2(1-\alpha)-(\ell_2-\ell_1)w_3}{1-w_1} + \frac{\ell_1\alpha}{w_1} - \ell_3 - \frac{\ell_1w_3}{w_1(1-w_1)},
		\end{align*}
		which imply
		\begin{align*}
			\frac{\partial (g F_1)}{\partial w_1}(w) = \frac{\ell_3}{w_3}
			\quad\text{ and }\quad
			\frac{\partial (gF_3)}{\partial w_3} (w)
			= \frac{\ell_2-\ell_1}{1-w_1} + \frac{\ell_1}{w_1(1-w_1)}.
		\end{align*}
		Thus, for all $w\in(0,1)\times(0,1]$,
		\begin{align*}
			\frac{\partial (gF_1)}{\partial w_1}(w)+\frac{\partial (gF_3)}{\partial w_3}(w)
			&=  \frac{\ell_3}{w_3} + \frac{\ell_2-\ell_1}{1-w_1} + \frac{\ell_1}{w_1(1-w_1)}
			= -g(w) >0.
		\end{align*} 
		Therefore, the Bendixson-Dulac theorem (see, e.g., \cite[Section 4.1]{hofbauer1998evolutionary}) applies and gives that every solution of $\dot y = F(y)$ starting in $(0,1)\times (0,1]$ has no orbit. 
		
		The Poincar\'e-Bendixson theorem (see, e.g., \cite[Theorem 4.1.1]{hofbauer1998evolutionary}) states that, because $y_w$ takes values in $[0,1]^2$, the limit set of $y_w$ is either a zero of $F$ or an orbit of $F$. Because there are no orbits, this concludes the proof. 
	\end{proof}
	
	We thus have the following corollary, which states that the normalised process $(\hat{\bf N}(n))_{n\geq 0}$ converges almost surely as $n$ tends to infinity:
	\begin{corollary}\label{cor:cv}
		Almost surely, $\lim_{n \to \infty}{\hat{\bf N}}(n)$ exists and 
		\[\lim_{n \to \infty}{\hat{\bf N}}(n) \in \{ (1,1-\alpha),(0, \alpha), (\alpha, 0), (\beta_1,\beta_3)\}.\]
	\end{corollary}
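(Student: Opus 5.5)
We will combine Proposition~\ref{prop:isastochapprox}, Lemma~\ref{lemma:triangles_noorbits} and Proposition~\ref{prop:ODEandStochApproxCV}. By Proposition~\ref{prop:isastochapprox}, for $n\geq n_0$ the two-dimensional process $\hat{\bf N}(n)=(\hat N_1(n),\hat N_3(n))$ is a stochastic approximation with vector field $F$. By Lemma~\ref{lemma:inequalities_for_the_nests} it eventually lies in the compact convex set $\mathcal E'$. The noise is bounded deterministically: each $\xi_i(n+1)$ is a difference of an indicator and a probability, so $\|\xi(n+1)\|\leq\sqrt3$.

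To apply Proposition~\ref{prop:ODEandStochApproxCV}, we take $\mathcal U$ to be the set
\[\{w\in[0,1]^2 : w_1+w_3\geq\alpha,\ (1-w_1)+w_3\geq 1-\alpha\}.\]
By Lemma~\ref{lemma:inequalities_for_the_nests}, $L(\hat{\bf N})\subseteq\mathcal U$ almost surely. Lemma~\ref{lemma:triangles_noorbits} shows that every trajectory of $\dot y=F(y)$ has a limit. That limit must be a zero of $F$, because a limit point of an autonomous flow with Lipschitz field is an equilibrium. So every trajectory started in $\mathcal U$ converges to one of the six points listed in Lemma~\ref{lem:zerosofF}.

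Proposition~\ref{prop:ODEandStochApproxCV} then gives almost sure convergence of $\hat{\bf N}(n)$ to one of these zeros. The limit must also lie in $\mathcal U$, and membership in $\mathcal U$ is decided by the two inequalities, which eliminate two of the six points:
\begin{itemize}
\item $(0,0)$ is excluded, since $w_1+w_3=0<\alpha$.
\item $(1,0)$ is excluded, since $(1-w_1)+w_3=0<1-\alpha$.
\item $(0,\alpha)$, $(\alpha,0)$ and $(1,1-\alpha)$ remain, since each satisfies both inequalities (with equality in at least one).
\end{itemize}
This leaves exactly the set announced in the corollary, with $(\beta_1,\beta_3)$ also kept as a candidate. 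When $(\beta_1,\beta_3)\notin[0,1]^2$ this point cannot be a limit anyway.

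The main technical point is that the conclusion of Proposition~\ref{prop:ODEandStochApproxCV} needs the convergence property of the flow on all of $\mathcal U$, including its boundary. Lemma~\ref{lemma:triangles_noorbits} is stated on all of $[0,1]^2$, so it covers the boundary. Still, Bendixson--Dulac was applied only on $(0,1)\times(0,1]$. On the boundary lines $w_1\in\{0,1\}$ and $w_3=0$, the field is invariant, since $F_1$ vanishes there (respectively $F_3$). These lines carry one-dimensional dynamics, where convergence is immediate by monotonicity. I would therefore check that each boundary trajectory moves monotonically towards a zero, which handles any leftover gap.

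A second check concerns the stochastic approximation itself. It is defined on $\mathcal E'$, whereas Proposition~\ref{prop:ODEandStochApproxCV} works on a convex compact set. Since $\mathcal E'$ is an intersection of half-planes with the square, it is convex. Starting at the random time $n_0$ is harmless, because shifting the index only changes the step sizes by a factor tending to $1$, and this change can be absorbed into $r(n)$.
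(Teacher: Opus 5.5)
Your proposal is correct and follows the paper's proof: you feed the stochastic approximation of Proposition~\ref{prop:isastochapprox} and the convergence of every ODE trajectory from Lemma~\ref{lemma:triangles_noorbits} into Proposition~\ref{prop:ODEandStochApproxCV}, then rule out $(0,0)$ and $(1,0)$ with the two inequalities of Lemma~\ref{lemma:inequalities_for_the_nests}. Your extra checks (the invariant boundary lines, convexity of $\mathcal E'$, the random starting time $n_0$) do not appear in the paper, but they only make explicit what the paper leaves implicit and do not change the argument.
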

	
	\begin{proof}
		By Proposition \ref{prop:isastochapprox}, 
		$(\hat{N}(n))_{n\geq 0}$ is a stochastic approximation with vector field $F$.
		By Lemma~\ref{lemma:triangles_noorbits}, any solution to the ODE $\dot y = F(y)$ started on $[0,1]^2$ converges. 
		Thus, Proposition~\ref{prop:ODEandStochApproxCV} to $(\hat{N}(n))_{n\geq 0}$ and gives that, almost surely,
		\[\lim_{n\to\infty} \hat {\bf N}(n) \in \{w: {F}(w) = {0}\}
		=\{ (1, 1-\alpha),(0, \alpha), (\alpha,0), (0,0), (1,0), (\beta_1,\beta_3)\},\] 
		by Lemma~\ref{lem:zerosofF}.
		Lemma \ref{lemma:inequalities_for_the_nests} gives that, almost surely,
		$\lim_{n\to\infty} \hat {\bf N}(n)\notin \{(0,0), (1,0)\}$, which concludes the proof.
	\end{proof}
	
	\subsection{Unstable zeros}
	From Corollary~\ref{cor:cv}, we have four possible values for $\lim_{n\to\infty} \hat{\bf N}(n)$. 
	In this section, we show almost sure lower and upper bounds for $\lim_{n\to\infty} \hat{\bf N}(n)$, 
	which then allow us to discard three out of the four possible limits and thus prove Theorem~\ref{thm:trianglesSP}.
	
	\begin{lemma}\label{lemma:minw1}
		Almost surely, $\liminf_{n\to\infty} \hat{N}_1(n) \geq \alpha$.
	\end{lemma}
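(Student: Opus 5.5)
Since $N_1(n)+N_2(n)=n$, the increment of $N_1$ at step $n+1$ happens exactly when the $(n+1)$-th ant reinforces $G_1$. I plan to bound this probability from below by a function of $\hat N_1(n)$ alone, and then apply Corollary~\ref{cor:urnstochdomin} to $X_n=N_1(n)$. The key observation is the first term in the decomposition of $P_1$ in~\eqref{eqn:defP1}: with probability $\alpha$ the ant starts at $\nest_1$. It then reinforces $G_1$ with probability $C_1/(C_1+\frac{C_2C_3}{C_2+C_3})$, and this is at least $C_1/(C_1+C_2)$, because $\frac{C_2C_3}{C_2+C_3}\le C_2$. Dropping the nonnegative second term (ants from $\nest_2$ going through $G_3\cup G_1$), we get
\[\mathbb P(N_1(n+1)=N_1(n)+1\mid\mathcal F_n)\;\geq\;\alpha\,\frac{C_1(n)}{C_1(n)+C_2(n)}.\]
This removes $G_3$ entirely from the bound, which is the point.

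Next, I will control $C_1(n)$ and $C_2(n)$ through $N_1(n)$ and $N_2(n)=n-N_1(n)$. By Lemma~\ref{lemma:single_nest_in_multi_nest}, $C_i(n)=\bar C_{G_i}(N_i(n))$. Lemma~\ref{lem:bounds_KMS} then gives $C_1(n)\geq N_1(n)/h_{\max}(G_1)$ and $C_2(n)\leq (N_2(n)+C)/\ell_2$. Alternatively, one could use the sharper two-sided estimate~\eqref{eqn:bound_conductance_delta_n} with $\Delta_n\to0$, which gives $C_1/(C_1+C_2)\ge \frac{1-\Delta_n}{1+\Delta_n}\cdot\frac{x/\ell_1}{x/\ell_1+(1-x)/\ell_2}$ with $x=\hat N_1(n)$. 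I would use this sharper estimate, since the cruder constants may not suffice to reach exactly $\alpha$. Because $\Delta_n\to0$, for any fixed $\eta>0$ we have, for all $n$ large enough,
\[\mathbb P(N_1(n+1)=N_1(n)+1\mid\mathcal F_n)\geq G_\eta(\hat N_1(n)),\qquad G_\eta(x):=(1-\eta)\,\alpha\,\frac{\ell_2 x}{\ell_2x+\ell_1(1-x)}.\]

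It remains to check the hypothesis of Corollary~\ref{cor:urnstochdomin}. Since $\ell_1\le\ell_2$, we have $\ell_2x+\ell_1(1-x)\le \ell_2$. Hence $G_\eta(x)\geq(1-\eta)\alpha x$, which is too weak, so a more careful computation is needed. Solving $G_\eta(x)>(1+\eps)x$ is equivalent to $(1-\eta)\alpha\ell_2>(1+\eps)(\ell_1+(\ell_2-\ell_1)x)$, which holds for all $x<c$ where
\[c=\frac{(1-\eta)\alpha\ell_2/(1+\eps)-\ell_1}{\ell_2-\ell_1}.\]
This is the main obstacle: this $c$ is below $\alpha$ and can even be negative when $\alpha\ell_2<\ell_1$. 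The crude bound discarded too much, namely the term coming from ants starting at $\nest_2$ and the full $G_3$ contribution. So I would instead keep the exact $p_1$ via Lemma~\ref{cor:bound_P_p}. From~\eqref{eq:F1}, $p_1(w)-w_1$ has the sign of $\ell_3(\alpha-w_1)+(\ell_2-\ell_1)w_3$, which is positive whenever $w_1<\alpha$, uniformly in $w_3\ge0$. The task is then to obtain a one-dimensional lower bound uniform in $w_3$. Taking the infimum over $w_3\in[0,1]$ of $p_1$, the minimum is attained at $w_3\to0$, where $p_1\to\alpha$ for $w_1>0$. Near $w_1=0$ I will use the formula: $p_1(w)\ge w_1\big(1+\frac{\ell_3(\alpha-w_1)(1-w_1)}{\text{denominator}}\big)$, and the denominator in~\eqref{eq:F1} is at most $\ell_2+\ell_3$. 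This yields $\inf_{w_3}p_1(w_1,w_3)\ge w_1(1+\kappa(\alpha-w_1))$ for some $\kappa>0$. With $G(x)=(1-\eta)x(1+\kappa(\alpha-x))$, the map $G(x)>(1+\eps)x$ holds for all $x<\alpha-\delta(\eta,\eps)$, with $\delta\to0$. Corollary~\ref{cor:urnstochdomin} then gives $\liminf\hat N_1\ge\alpha-\delta$ almost surely, and letting $\eta,\eps\downarrow0$ along a countable sequence gives the claim.
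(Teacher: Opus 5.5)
Your final argument is correct and is essentially the paper's. You reduce via Lemma~\ref{cor:bound_P_p} to $\inf_{w_3\in[0,1]}p_1(w_1,1-w_1,w_3)$, show this beats $(1+\varepsilon)w_1$ on $(0,\alpha-\delta)$, and conclude with Corollary~\ref{cor:urnstochdomin}. The one difference is that you read $p_1\ge w_1\bigl(1+\kappa(\alpha-w_1)\bigr)$, with $\kappa=(1-\alpha)\ell_3/(\ell_2+\ell_3)$, directly off the factored form~\eqref{eq:F1}, using $\ell_2\ge\ell_1$ and a denominator at most $\ell_2+\ell_3$; this is quicker and more explicit than the paper's exact computation of the infimum via the sign of $\partial p_1/\partial w_3$.

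One harmless slip: the infimum over $w_3$ is not attained as $w_3\to0$ for small $w_1$. For $0<w_1<c_1=\ell_1\alpha/(\ell_2(1-\alpha)+\ell_1\alpha)$, $p_1$ is decreasing in $w_3$, so the infimum is at $w_3=1$ and lies strictly below $\alpha$. Your bound is uniform in $w_3$ and never uses that claim.
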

	
	\begin{proof}
		Recall that, by Lemma~\ref{cor:bound_P_p}, almost surely for all $n$ large enough,
		\[\frac{1-\Delta_n}{1+\Delta_n}\cdot p_1(\hat{\bf N}(n)) 
		\leq \mathbb P\big(N_1(n+1) = N_1(n)+1 | \hat {\bf N}(n)\big)
		\leq \frac{1+\Delta_n}{1-\Delta_n}\cdot p_1(\hat{\bf N}(n)),
		\]
		where $\Delta_n =  \sup_i \{ \ell_i K_i n^{(\alpha_i-1)\eps_i}\}$, $K_1, K_2, K_3$ are almost surely finite random variables, $\eps_1, \eps_2, \eps_3$ are three positive constants, and $\alpha_i = \ell_i/(\ell_i+1)$ for all $1\leq i\leq 3$.
		This implies, in particular, that, for all $\varepsilon>0$, 
		almost surely for all $n$ large enough,
		\begin{equation}\label{eq:truc}
			\mathbb P\big(N_1(n+1) = N_1(n)+1 | \hat {\bf N}(n)\big)
			\geq \frac{1-\Delta_n}{1+\Delta_n}\cdot p_1(\hat{\bf N}(n))
			\geq (1-\varepsilon) g(\hat N_1(n)),
		\end{equation}
		where we have set, for all $w_1\in[0,1]$,
		\[g(w_1) = \inf_{0\leq w_3\leq 1} p_1(w_1, 1-w_1, w_3).\]
		Our aim is to apply Corollary~\ref{cor:urnstochdomin} to the case $G = (1-\varepsilon)g$.
		To do so, we need to lower bound $g$ by a linear function of slope at least~1, in a sufficiently large neighbourhood of $0$.
		Recall that, by definition (see~\eqref{eqn:defP1} and~\eqref{eqn:defp}),
		\[p_1(w) =  \frac{\alpha \ell_3 w_1(1-w_1)+\ell_2 w_1w_3}{\ell_3 w_1(1-w_1)+ \ell_2 w_1 w_3 + \ell_1 (1-w_1)w_3}.\]
		This implies
		\[\frac{\partial p_1}{\partial w_3}(w)
		= \frac{\ell_3 w_1 (1-w_1) ((1-\alpha) \ell_2 w_1 - \alpha\ell_1  (1-w_1)))}{(\ell_3 w_1(1-w_1)+ \ell_2 w_1 w_3 + \ell_1 (1-w_1)w_3)^2}.\]
		Thus, $w_3\mapsto p_1(w)$ is a non-decreasing function if and only if 
		\[w_1 \geq \frac{\ell_1 \alpha}{\ell_2 (1-\alpha) + \ell_1 \alpha} \eqqcolon c_1.\]
		We thus get that
		\[g(w_1) = \begin{cases}
			p_1(w_1, 1-w_1, 0) = \alpha & \si w_1 \geq c_1,\\
			p_1(w_1, 1-w_1, 1) = \frac{w_1 \left(\alpha \ell_3 (1-w_1) + \ell_2\right)}{\ell_1(1-w_1) + \ell_2 w_1 + \ell_3 w_1 (1-w_1)} & \sinon.
		\end{cases}\]
		Note that $g$ is continuous since $w_3\mapsto p_1(c_1, 1-c_1, w_3)$ is constant and thus  $p_1(c_1,1-c_1,1) = p_1(c_1, 1-c_1, 0)=\alpha$.
		Also, because, by assumption, $\ell_2 \geq \ell_1$, we get that $\alpha \geq c_1$.
		
		We now show that, on $[0,\alpha]$, the function~$g$ only cross the diagonal at $w_1=0$ and $w_1 = \alpha$ (only if $c_1 = \alpha$). Indeed, for $w_1 <c_1$, $g(w_1) = w_1$ if and only if $w_1 = 0$ or 
		\begin{equation}\label{eq:sol?}
			\frac{\alpha \ell_3 (1-w_1) + \ell_2}{\ell_1(1-w_1) + \ell_2 w_1 + \ell_3 w_1 (1-w_1)} = 1.
		\end{equation}
		Note that~\eqref{eq:sol?} is equivalent to $(\alpha-w_1)\ell_3 = \ell_1-\ell_2$, which admits no solution on $[0,\alpha)$ because $\ell_1\leq \ell_2$, by assumption.
		Thus, as claimed, on $[0,\alpha]$, the function~$g$ only crosses the diagonal at $w_1=0$ and $w_1 = \alpha$.
		In fact, $g$ is (strictly) above the diagonal on $(0,\alpha)$, since its derivative at~$0$ is $(\alpha\ell_3+\ell_2)/\ell_1>1$ (since $\ell_2 \geq \ell_1$ and $\alpha \ell_3>0$).
		Therefore, for any $\eta>0$, there exists $\eps>0$ and $\delta >0$ small enough such that, for all $w_1 \in (0,\alpha-\eta)$, $(1-\eps)g(w_1) > (1+\delta) w_1$.
		By Corollary~\ref{cor:urnstochdomin}, this implies
		\[\liminf_{n\to\infty} \hat N_1(n) \geq \alpha-\eta,\]
		which concludes the proof, because this holds for all $\eta>0$.
	\end{proof}
		
		\begin{lemma}\label{lemma:minw3}
			If $ \ell_3< \ell_1 +\ell_2$, then $\liminf_{n\to\infty} \hat N_3(n)>0$ almost surely.
		\end{lemma}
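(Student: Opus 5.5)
The plan is to use Corollary~\ref{cor:cv}, so that it suffices to exclude convergence to the one candidate limit with vanishing third coordinate. Among the four possible limits $(1,1-\alpha)$, $(0,\alpha)$, $(\alpha,0)$ and $(\beta_1,\beta_3)$, only $(\alpha,0)$ has $w_3=0$, since $\beta_3>0$ when $\ell_3<\ell_1+\ell_2$ by Remark~\ref{rem:beta1andbeta3}. So it is enough to show that $\mathbb P(\lim_n \hat{\bf N}(n) = (\alpha,0)) = 0$. I would do this by a comparison with a $G$-urn, as in the proof of Lemma~\ref{lemma:minw1}, but localised to the event $A=\{\lim_n\hat{\bf N}(n)=(\alpha,0)\}$.

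\textbf{Step 1: linearise $p_3$ near $w_3=0$.} Substituting $c_i = w_i/\ell_i$ in~\eqref{eqn:defP3} and using $w_2=1-w_1$ gives
\[p_3(w) = \frac{w_3\big(\alpha\ell_1(1-w_1)+(1-\alpha)\ell_2 w_1\big)}{\ell_3w_1(1-w_1)+\ell_2w_1w_3+\ell_1(1-w_1)w_3}.\]
As $w_3\to0$ this is $w_3\, h(w_1)+O(w_3^2)$, uniformly for $w_1$ in compact subsets of $(0,1)$, where
\[h(w_1)=\frac{\alpha\ell_1}{\ell_3 w_1}+\frac{(1-\alpha)\ell_2}{\ell_3(1-w_1)}.\]
The key observation is that $h(\alpha)=(\ell_1+\ell_2)/\ell_3>1$, precisely because $\ell_3<\ell_1+\ell_2$. (The infimum of $h$ over all of $(0,1)$ may be $\le 1$, by Cauchy--Schwarz, so we genuinely need $w_1$ to stay near $\alpha$; this is why I localise to $A$.) By continuity, we can fix $\eta>0$, $\delta>0$ and $c>0$ such that $p_3(w)\geq(1+2\delta)w_3$ for all $w_1\in[\alpha-\eta,\alpha+\eta]$ and all $w_3\in(0,c)$.

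\textbf{Step 2: coupling with a $G$-urn on $A$.} For each integer $m$, let $T_m$ be the event that $|\hat N_1(n)-\alpha|\le\eta$ and $\hat N_3(n)<c$ for all $n\ge m$. Then $A\subseteq\bigcup_m T_m$ up to a null set. By Lemma~\ref{cor:bound_P_p}, using $\Delta_n\to0$, almost surely for $n$ large the conditional probability that $N_3$ increases is at least $(1-\varepsilon)p_3(\hat{\bf N}(n))$. Hence, as long as the trajectory has stayed in the box since time $m$, this probability is at least $G(\hat N_3(n)):=(1+\delta)\hat N_3(n)$ on $(0,c)$. To apply Corollary~\ref{cor:urnstochdomin} I would build an auxiliary adapted process $X$. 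It coincides with $N_3$ up to the first exit time from the box after time $m$ (and after $\Delta_n$ has become small enough, which is a random but a.s.\ finite time). After that exit, it increments independently with probability $\min(1,G(\hat X_n))$. This $X$ satisfies the hypothesis of Corollary~\ref{cor:urnstochdomin}, so $\liminf_n \hat X_n\ge c$ almost surely. On $T_m$, $X$ coincides with $N_3$ forever, which contradicts $\hat N_3(n)<c$. Thus $\mathbb P(T_m)=0$ for every $m$, and hence $\mathbb P(A)=0$. A technical point is that Corollary~\ref{cor:urnstochdomin} is stated with $X_0=1$; shifting time to start at $m$ only changes the normalisation by a constant, which the remark after Definition~\ref{def:G-urn} allows. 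One can also avoid this by using the stopping time directly.

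\textbf{Step 3: conclusion, and the main obstacle.} Since $\lim_n\hat{\bf N}(n)$ exists almost surely and is not $(\alpha,0)$, its third coordinate lies in $\{1-\alpha,\alpha,\beta_3\}$, all of which are positive. So $\liminf_n\hat N_3(n)=\lim_n\hat N_3(n)>0$ almost surely. I expect the main obstacle to be the localisation in Step 2. Unlike in Lemma~\ref{lemma:minw1}, no lower bound for $p_3$ that is uniform in $w_1$ has slope larger than $1$, so the comparison urn is only valid on the random event $T_m$. Making the stopping and coupling argument rigorous, and checking the uniformity of the $O(w_3^2)$ remainder, is where the care is needed.
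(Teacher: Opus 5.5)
Your argument is correct in substance, but it localises $w_1$ differently from the paper. You need $w_1$ near $\alpha$ when $w_3$ is small. You correctly note that $p_3(w)/w_3\to h(w_1)$, and that only $h(\alpha)=(\ell_1+\ell_2)/\ell_3$ is guaranteed to exceed $1$. You obtain the localisation by invoking Corollary~\ref{cor:cv} and restricting to the event $A=\{\lim\hat{\bf N}(n)=(\alpha,0)\}$.

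The paper gets the same localisation globally and for free. By Lemma~\ref{lemma:minw1}, $\liminf\hat N_1(n)\ge\alpha$, and by Lemma~\ref{lemma:inequalities_for_the_nests}, $\limsup(\hat N_1(n)-\hat N_3(n))\le\alpha$. So almost surely, eventually $\alpha-\delta\le\hat N_1(n)\le\alpha+\delta+\hat N_3(n)$. Whenever $\hat N_3(n)\le\kappa$, this automatically pins $\hat N_1(n)$ near $\alpha$. A single comparison with a $G$-urn of slope close to $(\ell_1+\ell_2)/\ell_3$ then gives $\liminf\hat N_3(n)\ge\kappa$ directly.

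The paper's route is more elementary and quantitative. It does not use the ODE/Bendixson--Dulac convergence result at all, and it never conditions on a non-adapted event such as $A$. Your route is a generic ``exclude a linearly unstable equilibrium once convergence is known'' argument, and it does not need the observation that the nest constraint pins $w_1$. The price is that it depends on Corollary~\ref{cor:cv} (legitimately, since that corollary is proved independently of this lemma) and on a stopping-time construction.

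Two small repairs are needed in Step~2.

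\begin{itemize}
\item $\liminf_n\hat X_n\ge c$ does not contradict $\hat N_3(n)<c$ for all $n\ge m$, since the sequence could increase to $c$. The contradiction is with $\hat N_3(n)\to0$ on $A$. So what you actually obtain is $\mathbb P(A\cap T_m)=0$ for every $m$, which suffices.
\item The time after which $\Delta_n$ is small is not a stopping time, because the $K_i$ are not adapted. Instead, let $\sigma_m$ be the first $n\ge m$ at which either $\hat{\bf N}(n)$ leaves your box, or the $\mathcal F_n$-measurable inequality $\mathbb P(N_3(n+1)=N_3(n)+1\mid\mathcal F_n)\ge(1+\delta)\hat N_3(n)$ fails. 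On $A$, almost surely $\sigma_m=\infty$ for some $m$, and your auxiliary process then agrees with $N_3$ from time $m$ on. The paper's own proof glosses over the same ``eventual inequality'' issue.
\end{itemize}
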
 
		
		\begin{proof}
			We fix $\delta>0$ small enough so that
			\[\frac{\alpha \ell_1 (1-\alpha-\delta)+(\alpha-\delta) (1-\alpha)\ell_2}{\ell_3 (\alpha+\delta)(1-\alpha+\delta)}>1.\]
			This is possible because
			\[\lim_{\delta\downarrow0}
			\frac{\alpha \ell_1 (1-\alpha-\delta)+(\alpha-\delta) (1-\alpha)\ell_2}{\ell_3 (\alpha+\delta)(1-\alpha+\delta)}
			=\frac{\ell_1+\ell_2}{\ell_3}>1,\]
			by assumption.
			We then fix $\varepsilon>0$ small enough so that
			\begin{equation}\label{eq:choice_eps}
				\frac{\alpha \ell_1 (1-\alpha-\delta)+(\alpha-\delta) (1-\alpha)\ell_2}{\ell_3 (\alpha+\delta)(1-\alpha+\delta)}>\frac{1+\varepsilon}{(1-\varepsilon)^2}.
			\end{equation}
			Recall that, by Lemma~\ref{cor:bound_P_p}, almost surely for all $n$ large enough,
			\[\frac{1-\Delta_n}{1+\Delta_n}\cdot p_1(\hat{\bf N}(n)) 
			\leq \mathbb P\big(N_1(n+1) = N_1(n)+1 | \hat {\bf N}(n)\big)
			\leq \frac{1+\Delta_n}{1-\Delta_n}\cdot p_1(\hat{\bf N}(n)),
			\]
			where $\Delta_n =  \sup_i \{ \ell_i K_i n^{(\alpha_i-1)\eps_i}\}$, $K_1, K_2, K_3$ are almost surely finite random variables, $\eps_1, \eps_2, \eps_3$ are three positive constants, and $\alpha_i = \ell_i/(\ell_i+1)$ for all $1\leq i\leq 3$.
			This implies, in particular, that,
			almost surely for all $n$ large enough,
			\begin{equation}\label{eq:truc3}
				\mathbb P\big(N_3(n+1) = N_3(n)+1 | \hat {\bf N}(n)\big)
				\geq \frac{1-\Delta_n}{1+\Delta_n}\cdot p_3(\hat{\bf N}(n))
				\geq (1-\varepsilon) \cdot p_3(\hat{\bf N}(n)).
			\end{equation}
			By Lemmas~\ref{lemma:inequalities_for_the_nests} and~\ref{lemma:minw1}, almost surely,
			\[\liminf_{n\to\infty} \hat N_1(n)\geq \alpha
			\quad\text{ and }\quad
			\limsup_{n\to\infty} (\hat N_1(n)-\hat N_3(n))\leq \alpha.\]
			Almost surely for all $n$ large enough,
			$\hat{\bf N}(n)\in \mathcal E_\delta$, where
			\[\mathcal E_\delta = 
			\{w\in \mathcal E \colon w_1\geq \alpha-\delta
			\text{ and }w_1-w_3\leq \alpha+\delta\}.\]
			By~\eqref{eq:truc3}, almost surely for all $n\geq n_\delta$,
			\[\mathbb P\big(N_3(n+1) = N_3(n)+1 | \hat {\bf N}(n)\big)
			\geq h(\hat N_3(n)),\]
			where $h(w_3) := (1-\varepsilon) \inf\{p_3(w)\colon w = (w_1, w_2, w_3)\in\mathcal E_\delta\}$.
			By~\eqref{eqn:defP3} and~\eqref{eqn:defp}, we have, for all $w\in\mathcal E$ (see~\eqref{eq:def_E}),
			\[p_3(w) 
			= w_3\cdot \frac{\alpha \ell_1 (1-w_1)+(1-\alpha)\ell_2w_1}{\ell_3 w_1(1-w_1)+\ell_2w_1w_3+\ell_1w_3(1-w_1)}.\]
			Now note that, for all $w\in \mathcal E_\delta$, $\alpha-\delta \leq w_1\leq \alpha+\delta+w_3$, and thus
			\[p_3(w) 
			\geq w_3\cdot \frac{\alpha \ell_1 (1-\alpha-\delta-w_3)+(\alpha-\delta) (1-\alpha)\ell_2}{\ell_3 (\alpha+\delta+w_3)(1-\alpha+\delta)+(\ell_1+\ell_2)w_3}.
			\]
			There exists $\kappa>0$ small enough so that, for all  $w\in\mathcal E_\delta$ satisfying $w_3\leq \kappa$, 
			\[p_3(w) 
			\geq (1-\varepsilon)w_3\cdot \frac{\alpha \ell_1 (1-\alpha-\delta)+(\alpha-\delta) (1-\alpha)\ell_2}{\ell_3 (\alpha+\delta)(1-\alpha+\delta)}.\]
			By~\eqref{eq:choice_eps}, we get that, for all  $w\in\mathcal E_\delta$ satisfying $w_3\leq \kappa$,
			$p_3(w)\geq (1+\varepsilon)w_3/(1-\varepsilon)$.
			This implies that, for all $w_3\leq \kappa$, 
			$h(w_3)\geq (1+\varepsilon) w_3$.
			Thus, by Corollary~\ref{cor:urnstochdomin},
			\[\liminf_{n\uparrow\infty} \hat N_3(n)\geq \kappa>0,\]
			which concludes the proof.
		\end{proof}

		\begin{lemma}\label{lemma:majw1}
			If $\ell_2 < \ell_1 + \ell_3$, then $\limsup_{n\to\infty} \hat{N}_1(n)<1$ almost surely.
		\end{lemma}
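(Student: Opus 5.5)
The idea is to prove the equivalent statement $\liminf_{n\to\infty}\hat N_2(n)>0$ (recall $\hat N_1(n)+\hat N_2(n)=1$), following the template of the proof of Lemma~\ref{lemma:minw3}: bound the increment probability of $N_2$ from below by a function of $\hat N_2(n)$ alone, then apply Corollary~\ref{cor:urnstochdomin} to $X_n=N_2(n)$.

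First I need an a priori upper bound on $\hat N_3$ in terms of $\hat N_2$. Every ant whose reinforced path meets $G_3$ either started at $\nest_1$, in which case its path goes through $G_3$ and then $G_2$, or started at $\nest_2$. Hence $N_3(n)\le N_2(n)+\sum_{i\le n}(1-\mathbf 1_{A(i)})$, with $A(i)$ as in the proof of Lemma~\ref{lemma:inequalities_for_the_nests}. By the strong law of large numbers, for every $\delta>0$, almost surely for all $n$ large enough,
\[
\hat N_3(n)\le \hat N_2(n)+1-\alpha+\delta .
\]

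Second, I need the comparison between the true increment probability and $p_2$. Inequality~\eqref{eq:use_later} holds for every $i$, in particular for $i=2$. So for any $\varepsilon>0$, almost surely for all $n$ large enough, $\mathbb P(N_2(n+1)=N_2(n)+1\mid\mathcal F_n)\ge(1-\varepsilon)p_2(\hat{\bf N}(n))$. Multiplying numerator and denominator of $p_2=1-p_1$ by $\ell_1\ell_2\ell_3$ and writing $w_1=1-w_2$ gives
\[
p_2(w)=w_2\cdot\frac{(1-\alpha)\ell_3w_1+\ell_1w_3}{\ell_3w_1w_2+\ell_2w_1w_3+\ell_1w_2w_3}.
\]
I then consider the region $\mathcal R_\delta=\{w\in\mathcal E: w_3\le w_2+1-\alpha+\delta\}$ and set $h(w_2)=(1-\varepsilon)\inf\{p_2(w): w\in\mathcal R_\delta\}$, the infimum being over $w_3$ with $w_2$ fixed.

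The key computation, and the only place where the hypothesis $\ell_2<\ell_1+\ell_3$ enters, is to show that $h(w_2)\ge(1+\varepsilon)w_2$ for all $w_2\in(0,\kappa]$, for suitable small $\delta,\varepsilon,\kappa>0$. As $w_2\to0$ we have $w_1\to1$, and the ratio $p_2(w)/w_2$ tends to
\[
\frac{(1-\alpha)\ell_3+\ell_1w_3}{\ell_2w_3}+o(1),
\]
uniformly in $w_3$ bounded away from $0$. For $w_3$ near $0$ the ratio is even larger, since the term $\ell_3w_1w_2$ in the denominator is $O(w_2)$; I will handle this by noting that the denominator is at most $\ell_3w_2+(\ell_1+\ell_2)w_3$. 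The function $w_3\mapsto((1-\alpha)\ell_3+\ell_1w_3)/(\ell_2w_3)$ is non-increasing because $\ell_2\ge\ell_1$. Its value at the extremal point $w_3=1-\alpha$ is $(\ell_3+\ell_1)/\ell_2>1$. By continuity, choosing $\delta$, then $\varepsilon$, then $\kappa$ small enough gives $(1-\varepsilon)p_2(w)\ge(1+\varepsilon)w_2$ on $\mathcal R_\delta\cap\{w_2\le\kappa\}$. I expect this uniform-in-$w_3$ estimate to be the main obstacle, in particular controlling the $O(w_2)$ error terms so that they do not destroy the strict margin $(\ell_1+\ell_3)/\ell_2-1$.

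Finally, Corollary~\ref{cor:urnstochdomin} applied from a large random time onwards gives $\liminf_n\hat N_2(n)\ge\kappa$ almost surely. The normalisation and the starting-value issues are handled as in the proofs of Lemmas~\ref{lemma:minw1} and~\ref{lemma:minw3}. Hence $\limsup_n\hat N_1(n)\le1-\kappa<1$.
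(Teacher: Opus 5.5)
Your proof is correct, but it localises $\hat N_3(n)$ differently from the paper.

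\textbf{The paper's route.} It invokes Corollary~\ref{cor:cv}, and hence the whole stochastic-approximation / Bendixson--Dulac / Poincar\'e--Bendixson analysis, together with Lemma~\ref{lemma:minw1}. This shows that $\hat{\bf N}(n)$ eventually lies within $\delta$ of $\{(1,1-\alpha),(\alpha,0),(\beta_1,\beta_3)\}$. Using $\beta_1<1$ (Remark~\ref{rem:beta1andbeta3}), it deduces that whenever $\hat N_2(n)<\delta$ one has $\hat N_3(n)\in(1-\alpha-\delta,1-\alpha+\delta)$. It then bounds $p_2/w_2$ on that small box, using both the upper and the lower bound on $w_3$.

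\textbf{Your route.} You use the elementary counting bound $N_3(n)\le N_2(n)+\sum_{i\le n}(1-\mathbf 1_{A(i)})$. This gives only the one-sided constraint $w_3\le w_2+1-\alpha+\delta$, and you compensate with monotonicity. For small $w_2$, $p_2(w)/w_2$ is decreasing in $w_3$: the sign of its $w_3$-derivative is that of $\alpha\ell_1w_2-(1-\alpha)\ell_2w_1$. So the worst case is the upper endpoint, which yields the same constant $(\ell_1+\ell_3)/\ell_2$.

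\textbf{Comparison.} Your argument is independent of the ODE analysis and of $\beta_1$, in the spirit of Lemmas~\ref{lemma:minw1} and~\ref{lemma:minw3}. The paper's is shorter once Corollary~\ref{cor:cv} is available.

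\textbf{Two small points.}

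First, the limiting function $((1-\alpha)\ell_3+\ell_1w_3)/(\ell_2w_3)$ is decreasing for any $\ell_1,\ell_2$. Its monotonicity does not come from $\ell_2\geq\ell_1$.

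Second, the denominator bound $\ell_3w_2+(\ell_1+\ell_2)w_3$ is only usable near $w_3=0$, as you seem to intend. At $w_3\approx 1-\alpha$ it yields $(\ell_1+\ell_3)/(\ell_1+\ell_2)$, which need not exceed $1$. You can avoid the case split altogether by using instead the bound $(\ell_1+\ell_3)w_2+\ell_2w_3$, valid since $w_1,w_3\le 1$. This gives
\[
\frac{p_2(w)}{w_2}\ \ge\ \frac{(1-\alpha)\ell_3(1-w_2)+\ell_1w_3}{(\ell_1+\ell_3)w_2+\ell_2w_3}.
\]
The right-hand side is decreasing in $w_3$ once $\ell_1(\ell_1+\ell_3)w_2<(1-\alpha)\ell_2\ell_3(1-w_2)$. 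Evaluating it at $w_3=w_2+1-\alpha+\delta$ gives a quantity tending to $(\ell_1+\ell_3)/\ell_2>1$ as $w_2,\delta\to 0$.
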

		
		\begin{proof}
			
			First note that, by definition of the model, almost surely for all $n\geq 0$, $\hat N_1(n) = 1-\hat N_2(n)$. It is thus enough to prove that $\liminf_{n\to\infty} \hat{N}_2(n)>0$ almost surely.
			By Lemma~\ref{cor:bound_P_p}, almost surely for all $n$ large enough,
			\[\frac{1-\Delta_n}{1+\Delta_n}\cdot p_2(\hat{\bf N}(n)) 
			\leq \mathbb P\big(N_2(n+1) = N_2(n)+1 | \hat {\bf N}(n)\big)
			\leq \frac{1+\Delta_n}{1-\Delta_n}\cdot p_2(\hat{\bf N}(n)),\]
			where $\Delta_n =  \sup_i \{ \ell_i K_i n^{(\alpha_i-1)\eps_i}\}$, $K_1, K_2, K_3$ are almost surely finite random variables, $\eps_1, \eps_2, \eps_3$ are three positive constants, and $\alpha_i = \ell_i/(\ell_i+1)$ for all $1\leq i\leq 3$.
			For all $\varepsilon>0$, 
			almost surely for all $n$ large enough,
			\begin{equation}\label{eq:truc_again2}
				\mathbb P\big(N_2(n+1) = N_2(n)+1 | \hat {\bf N}(n)\big)
				\geq (1-\varepsilon) p_2(\hat{\bf N}(n)).
			\end{equation}
			We choose $\delta>0$ and $\eps>0$ such that $\delta < 1-\alpha-\delta$, $\delta < 1-\beta_1-\delta$, and
			\begin{equation}
				\frac{(1-\alpha)(1-\delta) \ell_3  + (1-\alpha-\delta) \ell_1}{\ell_3\delta+ \ell_1 \delta(1-\alpha+\delta)+\ell_2(1-\alpha+\delta)}
				> \frac{1+\varepsilon}{1-\varepsilon}. \label{eqn:hyp_delta}
			\end{equation}
			This is possible since 
			\[\frac{(1-\alpha)(1-\delta) \ell_3  + (1-\alpha-\delta) \ell_1}{\ell_3\delta+ \ell_1 \delta(1-\alpha+\delta)+\ell_2(1-\alpha+\delta)}
			\to \frac{\ell_3+\ell_1}{\ell_2}>1,\]
			as $\delta\downarrow0$, 
			and because $\alpha<1$ and $\beta_1<1$ (see Remark~\ref{rem:beta1andbeta3}).
			Now, by Corollary \ref{cor:cv} and Lemma \ref{lemma:minw1}, almost surely,
			\[\lim_{n \to \infty}{\hat{\bf N}}(n) \in \{ (1,1-\alpha),(\alpha, 0), (\beta_1,\beta_3)\}.\]
			Thus, almost surely for all $n$ large enough, $\hat{\bf N}(n)\in \mathcal E_\delta$, where $\mathcal E_\delta$ is the set of all points at distance at most $\delta$ of $\{ (1,1-\alpha),(\alpha, 0), (\beta_1,\beta_3)\}$ (for the $L^\infty$-norm). 
			Thus, by~\eqref{eq:truc_again2}, almost surely for all $n$ large enough,
			\begin{equation}\label{eq:truc_again3}
				\mathbb P\big(N_2(n+1) = N_2(n)+1 | \hat {\bf N}(n)\big)
				\geq g(\hat N_2(n)),
			\end{equation}
			where 
			$g(w_2) = (1-\varepsilon)
			\inf_{w_3\colon (1-w_2, w_3)\in\mathcal E_\delta} p_2(1-w_2, w_3)$
			if $w_2$ is at distance at most $\delta$ of $\{0, 1-\alpha, 1-\beta_1\}$, and $g(w_2) = 0$ otherwise.
			Now, assume that $w_2<\delta$ and $(1-w_2, w_3)\in\mathcal E_\delta$.
			Because $\delta<1-\alpha-\delta$ and $\delta<1-\beta_1-\delta$, we get that
			$(1-w_2, w_3)$ is at distance larger than $\delta$ of both $(\alpha, 0)$ and $(\beta_1,\beta_3)$.
			Thus, it must be at distance at most $\delta$ of $(1, 1-\alpha)$, which implies
			$w_3\in(1-\alpha-\delta, 1-\alpha+\delta)$. 
			Recall that 
			\[p_2(1-w_2,w_3) = \frac{(1-\alpha) \ell_3 (1-w_2) + \ell_1 w_3 }{\ell_3w_2(1-w_2)+ \ell_1 w_2w_3+\ell_2(1-w_2)w_3}\cdot w_2.\]
			Thus, because $w_2<\delta$ and $1-\alpha-\delta < w_3 < 1-\alpha+\delta$, we get
			\[p_2(1-w_2,w_3)
			> \frac{(1-\alpha)(1-\delta) \ell_3  + (1-\alpha-\delta) \ell_1}{\ell_3\delta+ \ell_1 \delta(1-\alpha+\delta)+\ell_2(1-\alpha+\delta)}\cdot w_2
			> \frac{1+\varepsilon}{1-\varepsilon}\cdot w_2,\]
			by~\eqref{eqn:hyp_delta}.
			Therefore, for all $w_2<\delta$, $g(w_2) > (1+\varepsilon)w_2$.
			Thus, by~\eqref{eq:truc_again3} and Corollary \ref{cor:urnstochdomin}, $\liminf_{n\to\infty} \hat{N}_2(n)\geq\delta>0$, concludes the proof.
		\end{proof}

		\subsection{Conclusion}
		
		We can now conclude the proof of Theorem \ref{thm:trianglesSP}.
		
		\begin{proof}[Proof of Theorem \ref{thm:trianglesSP}] 
			We treat the three cases of Theorem \ref{thm:trianglesSP} separately:
			
			$\bullet$ We first assume that $\ell_3 \geq \ell_1+\ell_2$, which implies $\ell_2\leq \ell_3-\ell_1<\ell_1+\ell_3$ (because $\ell_1\geq 1$, by assumption).
			In this case, $(\beta_1,\beta_3) \notin [0,1]^2$ (except if $\ell_3 = \ell_1+\ell_2$, but then $(\beta_1,\beta_3) = (\alpha,0)$).
			Lemma~\ref{lemma:majw1} applies in this case and gives that, almost surely, $\hat{\bf N}(n)$ does not converge to $(1,1-\alpha)$. Also, Lemma~\ref{lemma:minw1} implies that, almost surely, $\hat{\bf N}(n)$ does not converge to $(0,\alpha)$.
			Thus, by Corollary \ref{cor:cv}, we get that, almost surely as $n$ tends to infinity,
			\[ {\hat{\bf N}}(n) \to (\alpha,0),\]
			as desired.
			
			$\bullet$ We now assume that $\ell_2 \geq \ell_1+\ell_3$, which implies $\ell_3\leq \ell_2-\ell_1<\ell_2+\ell_1$.
			In this case again, $(\beta_1,\beta_3) \notin [0,1]^2$ (except if $\ell_2 = \ell_1+\ell_3$, but then $(\beta_1,\beta_3) = (1,1-\alpha)$).
			By Lemma \ref{lemma:minw3}, we get that, almost surely, $\hat{\bf N}(n)$ does not converge to $(\alpha, 0)$. Also, by Lemma~\ref{lemma:minw1}, almost surely, $\hat{\bf N}(n)$ does not converge to $(0, \alpha)$.
			Thus, by Corollary \ref{cor:cv}, we get that, almost surely as $n$ tends to infinity,
			\[{\hat{\bf N}}(n)\to (1,1-\alpha),\]
			as claimed.
			
			$\bullet$ Finally, if $\ell_2 < \ell_1+\ell_3$ and $\ell_3 < \ell_1+\ell_2$, the same reasoning combining Lemmas \ref{lemma:minw1}, \ref{lemma:minw3}, \ref{lemma:majw1} and Corollary \ref{cor:cv} implies that, almost surely as $n$ tends to infinity,
			\[{\hat{\bf N}}(n) \underset{n\to\infty}{\longrightarrow} (\beta_1,\beta_3).\]
			
			Finally, to conclude about the edge-weights convergence, recall that, by Lemma~\ref{lemma:single_nest_in_multi_nest}, the processes induced by the two-nest process on each of $G_1$, $G_2$ and $G_3$ are three single-nest ants processes. 
			Thus, convergence of the edge-weights follows from~\cite[Theorem~1.3]{kious2020finding}, which concludes the proof.
		\end{proof}
		
		\bibliographystyle{alpha}
		\bibliography{biblio.bib}

	\end{document}